\newcommand{\E}{\mathbb{E}}
\renewcommand{\P}{\mathbb{P}}
\newcommand{\N}{\mathbb{N}}
\newcommand{\Z}{\mathcal{Z}}
\newcommand{\R}{\mathbb{R}}
\newcommand{\eps}{\epsilon}
\newcommand{\sss}{\scriptscriptstyle}
\newcommand{\F}{\mathcal{F}}
\newcommand{\supnorm}[1]{\| #1\|}
\newcommand{\graph}{{\sf{G}}}
\newcommand{\connect}{{\sf{C}}}
\newcommand{\pc}{p_{\rm c}}
\newcommand{\tree}{\Gamma}
\newcommand{\eigenfct}{v} 
\newcommand{\IBRW}{{\mathrm{IBRW}}} 
\newcommand{\size}{\theta}
\newcommand{\interval}{\mathcal{I}}
\newcommand{\betac}{\beta_{\rm c}}
\newcommand{\gammac}{\gamma_{\rm c}}
\newcommand{\mUnbiased}{\mathtt{a}}
\newcommand{\mBiased}{\mathtt{c}}
\newcommand{\stepconst}{a}
\newcommand{\bdconst}{b}
\newcommand{\asymptotically}{{\rm asym}}
\newcommand{\OffBase}{\mathcal{C}_{n}} 
\newcommand{\offDistrBase}{X_{n}} 
\newcommand{\subGW}{{\rm GW}_{n}} 
\newcommand{\offDistr}{\offDistrBase}
\newcommand{\subOff}{\OffBase}
\newcommand{\OffAsymp}{\OffBase^{\sss (\asymptotically)}}
\newcommand{\extinctProbGW}{q_{n,\eps}}
\newcommand{\momGen}{g}
\newcommand{\parent}{{\rm p}}
\newcommand{\loc}{S} 
\newcommand{\type}{\tau} 
\newcommand{\typespace}{\mathcal{T}}
\newcommand{\set}[1]{\left\{#1\right\}}
\newcommand{\abs}[1]{\left\vert#1\right\vert}
\newcommand{\la}{\lambda}
\newcommand{\ra}{\rightarrow}
\newcommand{\p}{\mathbb{P}}
\newcommand{\cC}{\mathcal{C}}
\newcommand{\1}{\mathbbm{1}}
\newcommand{\ssup}[1] {{\scriptscriptstyle{({#1})}}}
\numberwithin{equation}{section}
\newtheorem{theorem}{Theorem}[section]
\newtheorem{lemma}[theorem]{Lemma}
\newtheorem{corollary}[theorem]{Corollary}
\newtheorem{proposition}[theorem]{Proposition}
\theoremstyle{definition}
\newtheorem{remark}[theorem]{Remark}
\definecolor{MyDarkBlue}{rgb}{0,0.08,0.50}
\definecolor{BrickRed}{rgb}{0.65,0.08,0}
\title{Near critical preferential attachment networks have\\ small giant components}
\author{Maren Eckhoff\thanks{QuantumBlack,  Kinnaird House, 1 Pall Mall East, London SW1Y 5AU, United Kingdom.\newline Email: {\tt eckhoff.maren@gmail.com}}
\and
Peter M\"orters\thanks{Mathematisches Institut, Universit\"at zu K\"oln, Weyertal 86-90, 50931 K\"oln, Germany.\newline Email: {\tt moerters@math.uni-koeln.de}}
\and
Marcel Ortgiese\thanks{Department of Mathematical Sciences, University of Bath, Bath, BA2 7AY, United Kingdom.
\newline Email: {\tt m.ortgiese@bath.ac.uk} }}
\begin{document}
\maketitle

\begin{abstract}
\noindent
Preferential attachment networks with power law exponent~$\tau>3$ are known to exhibit a phase transition. There is a value $\rho_{\rm c}>0$ such that, for small edge densities $\rho\leq \rho_{\rm c}$ every component of the graph comprises an asymptotically vanishing proportion of vertices, while for large edge densities $\rho>\rho_{\rm c}$  there is a unique giant component comprising an asymptotically  positive proportion of vertices. In this paper we study the decay in the size of the giant component as the critical edge density is approached from above. We show that the size decays very rapidly, like $\exp(-c/ \sqrt{\rho-\rho_{\rm c}})$ for an explicit constant~$c>0$ depending on the model implementation. This result is in contrast to 
the behaviour of the class of rank-one models of scale-free networks, including the configuration model, where the decay is polynomial. Our proofs rely on the local neighbourhood approximations of~\cite{DerMoe13} and recent progress in the theory of branching random walks~\cite{GHS11}.
\end{abstract}

\bigskip

\noindent
{\bf{Key words:}} Power law, small world,  scale-free network, preferential attachment, Barab\'asi-Albert model, graph components, percolation, killed branching random walk, survival probability.\\[0.5cm]

\noindent
{\bf{Mathematics Subject Classification (2010):}} Primary 05C80; Secondary 60J85, 60J80, 90B15

\section{Introduction and main results}

\subsection{Introduction}

Sparse random graph models typically undergo a phase transition in their connectivity behaviour depending on the mean number of edges per vertex. A typical case are the Erd\H{o}s-Renyi graphs with $n$ vertices and $m=m(n)$ edges and asymptotic edge density \smash{$\rho=\lim \frac{m(n)}{n}$}. There exists a critical density $\rho_{\rm c}=1$ such that if the edge density satisfies $\rho\leq \rho_{\rm c}$ the largest component in the graph comprises a vanishing proportion of vertices, whereas for $\rho>\rho_{\rm c}$ this proportion~$\theta(\rho)$ is strictly positive. The behaviour of $\theta(\rho)$ as $\rho\downarrow \rho_{\rm c}$ is characterised by an exponent~$\beta$ defined by
$$\theta(\rho)\sim (\rho-\rho_{\rm c})^\beta, \qquad \mbox{ as } \rho\downarrow \rho_{\rm c},$$
which is $\beta=1$ in the Erd\H{os}-Renyi case. A natural extension of the Erd\H{os}-Renyi model is the configuration model, which allows to construct random graphs with $n$ vertices and a given degree 
sequence $m_1,\ldots, m_n$. Of particular interest is the case when the degree sequence is heavy-tailed,
$$\frac1n \# \{ 1\leq j\leq n \colon m_j =k\} \stackrel{n\to\infty}{\longrightarrow} \mu(k)=k^{-\tau+o(1)}, \quad \mbox{ as } k\to\infty,$$
where the parameter $\tau>2$ is the power-law exponent. The connectivity behaviour of the
Erd\H{o}s-Renyi model persists with $\beta=1$ for the configuration model with $\tau>4$, and with
$\beta=1/(\tau-3)$ if $3<\tau<4$, see Cohen et al.~\cite{CbAH02}. The paper by Cohen is based on
an informal approximation of local neighbourhoods in the graph by Galton-Watson trees and thereby
extends to  a wide range of scale-free network models, where similar approximations hold.
\smallskip

Cohen et al.~\cite{CbAH02} claim their result for scale-free networks in general without specifying a model.
This reflects the belief that the behaviour observed in the configuration model extends to all natural scale-free network models including the class of preferential attachment networks.   In the present paper however we show for the first time that
preferential attachment networks have a qualitatively completely different behaviour than predicted in~\cite{CbAH02}. In fact, for all $\tau>3$ the size of the giant component is decaying exponentially as one approaches the critical edge density. More precisely, we show that the relative size of the giant component in preferential attachment networks~is
$$\theta(\rho) = \exp\Big(\frac{-c+o(1)}{\sqrt{\rho-\rho_{\rm c}}}\Big), \mbox{ as $\rho\downarrow\rho_{\rm c},$}$$
where $c$ is an explicit constant depending on the way in which the edge density is controlled.
This demonstrates once again  that preferential attachment networks belong to a different universality class than the configuration model and other models based on rank-one connection probabilities.
\smallskip

The underlying phenomenon of the `small giant component' or `slow emergence of the giant' has first been discovered and discussed by Oliver Riordan in the seminal paper~\cite{Rio05}, and in collaboration with Bollob\'as and Janson in~\cite{BolJanRio05} and \cite{BR05}.  Riordan~\cite{Rio05} finds that for the original Barabasi-Albert model subject to Bernoulli percolation with retention parameter~$p$, one has
\begin{equation}\label{LCDresult}
\size(p)= \exp\Big(-\frac{c_m +o(1)}{p}\Big) \quad \text{as }p \downarrow 0, \quad \text{ for }c_m=\frac{1}{2}\log\Big(\frac{m+1}{m-1}\Big), 
\end{equation}
where $m\geq 2$ is the outdegree of all vertices. As this model corresponds to the critical case $\tau=3$ this is not at odds with the results of Cohen et al.~\cite{CbAH02}. The merit of our work is to extend the phenomenon of slow emergence to a regime where  it is most surprising because it defies the predictions in~\cite{CbAH02}.
\smallskip

In his proof, Riordan exploits the fact that there are local approximations of the network by multitype branching processes whose survival probability can be studied analytically by looking at the associated Laplace operators. These branching processes are more complex than those used in~\cite{CbAH02}. 
\smallskip

In the present paper local approximation is by a branching random walk with killing. Instead of the analytical techniques used by Riordan, which we could not apply effectively in our case due to the higher complexity of the approximating process, we use geometric properties of killed branching random walks. Studying their survival probabilities requires sophisticated techniques from the theory of branching random walks, which became available only in the past few years, see in particular~\cite{GHS11}. Our results are therefore pleasingly probabilistic and highly timely.

\subsection{Statement of results}

We start by describing the preferential attachment network introduced in~\cite{DerMoe09}, which gives scale-free networks with arbitrary power law exponent~$\tau>2$ by variation of a parameter.
\smallskip

A concave function $f\colon \N_0 \to (0,\infty)$ is called an \emph{attachment rule} if $f(0)\le 1$ and $$\triangle f(k):=f(k+1)-f(k) <1 \mbox{  for all $k\in \N_0$. }$$
The maximal increment is denoted by $\gamma^+:=\sup\{\triangle f(k)\colon k\in \N_0\}$. By concavity, $f$ is non-decreasing, $\gamma^+=f(1)-f(0)$ and the limit $\gamma:=\lim_{k \to \infty} {f(k)}/{k}$ exists and equals $\gamma=\inf\{\triangle f(k)\colon k\in \N_0\}$.
Given an attachment rule $f$,
we define a growing sequence $(\graph_n \colon n\in \N)$ of random graphs as follows
\begin{itemize}
\item  Start with the graph $\graph_1$ given by one vertex labelled $1$ and no edges;
\item Given the graph $\graph_n$, we construct $\graph_{n+1}$ from $\graph_n$ by adding a new vertex labelled $n+1$ and, for each $m \le n$ independently, 
inserting the directed edge $(n+1,m)$ with probability
\[
\frac{f(\text{indegree of } m \text{ at time }n)}{n}.
\] 
\end{itemize}

Formally, we are dealing with a sequence of directed graphs but all edges point from the younger to the older vertex. 
Hence, directions can be recreated from the undirected, labelled graph. For all structural questions, particularly regarding 
connectivity and the length of shortest paths, we regard $(\graph_n \colon n\in\N)$ as an undirected network. It is shown in~\cite{DerMoe09} that when $\gamma>0$ the networks have a degree distribution which is a power law with exponent \smash{$\tau=1+1/\gamma.$}
We are also interested in the percolated version of the network $(\graph_n\colon n \in \N)$. For $p \in [0,1]$, we write $\graph_n(p)$ for the graph obtained from $\graph_n$ by deleting each edge with probability $1-p$ independent of all other edges.
\smallskip

Let $(\graph_n\colon n \in \N)$ be a sequence of (random or deterministic) graphs, where $\graph_n$ has $n$ vertices. For $n \in \N$, we denote by $|\connect_n|$ the size of the largest component in $\graph_n$. The network $(\graph_n\colon n \in \N)$ has a \emph{giant component} if there exists a constant $\size >0$ such that
\[
\frac{|\connect_n|}{n} \to \size \qquad \text{as }n \to \infty,
\]
where the convergence holds in probability. The limit $\theta$ is called the \emph{size of the giant component}. 
\smallskip

Dereich and M\"orters showed in \cite[Theorem 1.6]{DerMoe13} that when $\gamma\ge \frac{1}{2}$, or equivalently $2<\tau \leq 3$, then $(\graph_n(p) \colon n \in \N)$ has a giant component for all $p \in (0,1]$. 
When $\gamma <\frac{1}{2}$, or equivalently $\tau>3$, then there exists a critical percolation parameter $\pc>0$ such that $(\graph_n(p) \colon n \in \N)$ has giant component if and only if $p>\pc$. We denote the relative size of the giant component in $(\graph_n(p) \colon n \in \N)$ by $\size(p,f)$ and omit $p$ or $f$ from the notation when the percolation parameter or the attachment rule are fixed.
\smallskip

We are interested in the decay in the size of the giant component
as we approach $\pc$ from above in the case $\pc>0$, or equivalently $\gamma <\frac{1}{2}$.
It was shown in Lemma 3.3 of \cite{DerMoe13} that the critical retention probability for the network $(\graph_n \colon n \in \N)$ is given by 
\[
\pc=\rho(\alpha^*)^{-1},
\]
where $\rho(\cdot)$ is the spectral radius of the score operator, a function on a nonempty  open interval, which we describe explicitly in Section~2, and $\alpha^*$ is the minimizer of $\rho$.
\smallskip

Our first result shows the exponential decay of the size of the giant component of the percolated network, when the retention parameter approaches the critical value.

\begin{theorem}\label{thm:percol}
Let $f$ be an attachment rule with $\gamma<\frac{1}{2}$ and $\size(1,f)>0$. Then
\[
\lim_{p \downarrow \pc} \sqrt{p-\pc} \log \size(p,f)  = -\sqrt{\frac{\pi^2 \rho''(\alpha^*)}{2}} \alpha^* \pc.
\]
\end{theorem}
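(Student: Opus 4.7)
The plan is to reduce $\size(p,f)$ to the survival probability of a near-critical killed branching random walk and then apply Gantert-Hu-Shi type survival asymptotics. By the local neighbourhood approximation developed in~\cite{DerMoe13}, the giant component in $\graph_n(p)$ is in correspondence with the survival set of an idealised multi-type branching random walk $\IBRW(p)$ whose spatial coordinate tracks (log-)ages of the vertices visited by an exploration; in particular $\size(p,f)$ equals the survival probability of $\IBRW(p)$. Varying $p$ rescales the offspring intensity by a factor of $p$, and the spectral criterion $p\rho(\alpha^*)=1$ for the score operator introduced in Section~2 identifies criticality as $p = \pc$.

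For $p$ slightly above $\pc$ I would perform an exponential tilt at slope $\alpha^*$, i.e.\ the standard spine change of measure. Since $\rho'(\alpha^*)=0$, the tilted spine walk has nearly vanishing drift, and a Taylor expansion of $\log(p\rho(\alpha))$ around $\alpha^*$ produces spine variance proportional to $\rho''(\alpha^*)/\rho(\alpha^*)$ together with a small positive logarithmic supercriticality gap of order $(p-\pc)/\pc$. The tilted process is thus a barely supercritical branching random walk killed on entering a half-line --- precisely the class treated by Gantert-Hu-Shi~\cite{GHS11} and its subsequent near-critical refinements. Their survival-probability asymptotic of the form $\exp\bigl(-(1+o(1))\pi\sqrt{\sigma^2/(2\eps)}\bigr)$, combined with a careful bookkeeping of the change of variables between the tilted spine coordinate and the original retention parameter $p$, yields after substitution the announced constant $\alpha^*\pc\sqrt{\pi^2\rho''(\alpha^*)/2}$.

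I would prove the matching upper and lower bounds separately. The upper bound follows from a first-moment many-to-one identity along the spine combined with a ballot-type estimate enforcing the killing barrier; this gives the correct leading exponential order. The lower bound is more delicate and requires a truncated second-moment computation restricted to spine paths making a Brownian-bridge-like excursion of length $L\sim 1/\sqrt{p-\pc}$, the natural scale on which the small drift of order $\sqrt{\eps}$ balances the $O(1)$ fluctuations of the spine.

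The principal obstacles will be (a) lifting the Gantert-Hu-Shi framework from the i.i.d.\ BRW setting to the multi-type, continuous-type branching random walk arising from the preferential attachment dynamics, and (b) making the local neighbourhood approximation quantitatively sharp enough to transfer the delicate $\exp(-c/\sqrt{p-\pc})$ asymptotics from $\IBRW(p)$ back to $\graph_n(p)$. For (b) the exploration radius must be chosen at the near-critical scale $R\sim 1/\sqrt{p-\pc}$, large enough that surviving exploration paths are reliably detected but still within the regime of validity of the local limit; controlling the coupling error uniformly as $p\downarrow\pc$ is expected to be the main technical burden of the proof.
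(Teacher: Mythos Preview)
Your overall strategy matches the paper's: reduce to the survival probability of the killed $\IBRW$, tilt at $\alpha^*$ via a many-to-one/spine construction, prove the upper bound by a first-moment argument and the lower bound by a (truncated) second-moment argument, with the Gantert--Hu--Shi machinery supplying the correct constant. The scales you identify are right: $\eps_n=\log(p_n\rho(\alpha^*))\sim(p_n-\pc)/\pc$ and the relevant spatial window is of order $\eps_n^{-1/2}$.

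One correction: your obstacle~(b) is not an obstacle at all. The identification $\size(p,f)=P(\text{killed }\IBRW(p)\text{ survives})$ is an \emph{exact} equality already established in~\cite{DerMoe13}; there is no coupling error to control near criticality, and no exploration radius to tune. All the work lives in obstacle~(a).

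For~(a), the paper's key adaptation is a Markov-chain version of Mogulskii's small-deviation theorem (trajectories confined to a shrinking tube), which replaces the i.i.d.\ ballot/bridge estimates you allude to. This requires verifying a uniform-in-type Donsker invariance principle for the tilted spine, which in turn rests on uniform exponential moment bounds for the spine increments and continuity of the eigenfunction $v_{\alpha^*}$ across types. The lower bound is not a pure second-moment argument on the spine but a coarse-graining: one builds an embedded Galton--Watson tree whose generations correspond to blocks of $N\sim\eps^{-3/2}$ IBRW generations, bounds its offspring distribution via Paley--Zygmund plus Mogulskii, and then invokes a Galton--Watson survival lemma from~\cite{GHS11}.
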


An alternative way of reducing the edge density and thereby
destroying the giant component is to alter the attachment rule instead of percolating the network. 
For a linear attachment rule $f(k)=\gamma k+\beta$ with $\gamma <\frac{1}{2}$, Dereich and M\"orters \cite{DerMoe13} show that there exists a giant component if and only if
\[
\beta > \frac{(\frac{1}{2}-\gamma)^2}{1-\gamma}=:\betac(\gamma)=\betac.
\]
Therefore, one could fix $\gamma$ and decrease $\beta$ to $\betac$. Another idea would be, for a given $\beta$, to decrease $\gamma$ until $\beta=\betac(\gamma)$. 
To analyse the behaviour of the size of the giant component under this procedure, let $(f_t)_{t \ge 0}$ be a sequence of attachment rules with $\gamma_t:=\inf\{\triangle f_t(k)\colon k \in \N_0\}<\frac{1}{2}$ for all $t \ge 0$. We denote by $\rho_t$ and $\alpha_t^*$ the spectral radius and its minimizer corresponding to the score operator for the unpercolated branching random walk derived from attachment rule $f_t$.

\begin{theorem}\label{thm:vary_f}
Let $(f_t)_{t\ge 0}$ be a pointwise decreasing sequence of attachment rules with $\gamma_t<\frac{1}{2}$ for all $t\ge 0$ and pointwise limit $f$. Suppose that $\size(1,f_t)>0$ for all $t$ and $\size(1,f)=0$. Then
\[
\lim_{t \to \infty} \sqrt{\log \rho_t(\alpha_t^{*})} \log \size(1,f_t) =  - \sqrt{\frac{\pi^2 \rho''(\alpha^*) }{2}}\alpha^*,
\]
where $\alpha^*$ and $\rho''(\alpha^*)$ are derived from $f$.
\end{theorem}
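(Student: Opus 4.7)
The plan is to parallel the proof of Theorem~\ref{thm:percol}, with the attachment rule now playing the role of the tuning parameter instead of the percolation retention probability. Via the local neighbourhood approximation of~\cite{DerMoe13}, the relative size of the giant component $\size(1,f_t)$ equals, up to $o(1)$, the survival probability of a killed branching random walk $\IBRW_t$ whose step distribution and branching mechanism are entirely determined by $f_t$. This reduces the theorem to an asymptotic for the survival probability of a family of killed branching random walks approaching criticality from the supercritical side.

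Each $\IBRW_t$ is governed by the score operator with spectral radius $\alpha\mapsto\rho_t(\alpha)$, minimiser $\alpha_t^{*}$, and spectral minimum $\rho_t(\alpha_t^{*})>1$; the assumption $\size(1,f)=0$ corresponds to $\rho(\alpha^{*})=1$, so that $\log\rho_t(\alpha_t^{*})\downarrow 0$ quantifies the distance to criticality. The Gantert--Hu--Shi asymptotics~\cite{GHS11}, invoked exactly as for Theorem~\ref{thm:percol}, then predict
\[
\log \P(\IBRW_t\text{ survives})=-\alpha_t^{*}\sqrt{\frac{\pi^2\,\rho_t''(\alpha_t^{*})}{2\,\log\rho_t(\alpha_t^{*})}}\,(1+o(1)).
\]
Multiplying by $\sqrt{\log\rho_t(\alpha_t^{*})}$ and sending $t\to\infty$ yields the right-hand side of the theorem, provided $\alpha_t^{*}\to\alpha^{*}$ and $\rho_t''(\alpha_t^{*})\to\rho''(\alpha^{*})$. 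Continuity of these spectral quantities along the monotone sequence $f_t\downarrow f$ should follow from standard perturbation theory applied to the score operator, combined with the strict convexity of $\log\rho(\cdot)$ at $\alpha^{*}$ and uniform integrability of the relevant tilted step-size distributions; this parallels the analogous continuity arguments in~\cite{DerMoe09,DerMoe13}.

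The main obstacle is that, unlike in Theorem~\ref{thm:percol} where only a scalar factor $p$ multiplies the critical step distribution, here the entire step law of $\IBRW_t$ drifts with~$t$. Consequently the Gantert--Hu--Shi asymptotic cannot be quoted for a fixed walk; a \emph{uniform} version along the sequence $(f_t)$ is required. I would address this by inspecting the proof in~\cite{GHS11} and verifying that its upper- and lower-bound error terms are controlled by a finite number of functionals of the step distribution, specifically truncated exponential moments, the minimiser, the local curvature at the minimum, and the $L^p$-norm of the principal eigenfunction. Since each of these converges along $(f_t)$ thanks to pointwise monotone convergence $f_t\downarrow f$ and dominated convergence for the tilted Laplace transforms, the Gantert--Hu--Shi asymptotic extends uniformly to a neighbourhood of the critical attachment rule, which is exactly what is needed to interchange the limit $t\to\infty$ with the near-critical asymptotics.
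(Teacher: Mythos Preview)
Your high-level strategy is correct and matches the paper's: Theorems~\ref{thm:percol} and~\ref{thm:vary_f} are in fact proved simultaneously by reducing to the survival probability of the killed IBRW and then adapting the argument of~\cite{GHS11}, with Proposition~\ref{prop:conv} supplying exactly the spectral continuity you anticipate ($\alpha_t^*\to\alpha^*$, $\rho_t''(\alpha_t^*)\to\rho''(\alpha^*)$, and uniform bounds on the eigenfunction ratios $v_{t,\alpha_t^*}(0)/v_{t,\alpha_t^*}(\ell)$).

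There is, however, a structural point you underestimate. The IBRW is a \emph{multitype} branching random walk with type space $[0,\infty)\cup\{\ell\}$: the offspring law of a particle depends on its type, which records the distance to its parent. Consequently the spine process produced by the many-to-one lemma is not a random walk with i.i.d.\ increments but a genuine Markov chain $(S_i,\tau_i)$ on $\R\times\typespace$ whose increment law depends on the current type. Your plan to ``inspect the proof in~\cite{GHS11} and verify that its error terms are controlled by a finite number of functionals of the step distribution'' would stall here, because there is no single step distribution to speak of. The paper therefore cannot quote~\cite{GHS11} even with uniform constants; it has to re-prove the key ingredients in the Markov-chain setting: a typed many-to-one lemma (Lemmas~\ref{lem:mto} and~\ref{le:mto_2}), a Mogulskii small-deviation theorem for Markov chains satisfying a type-uniform invariance principle (Theorem~\ref{thm:mogulskii}, proved from scratch in the appendix), and the verification that this invariance principle holds uniformly over initial types \emph{and} along the sequence $f_{t_n}$ (Lemmas~\ref{lem:DonskCondCheck} and~\ref{lem:DonskCondCheck2}, resting on the uniform exponential moments of Lemma~\ref{le:exp_moments}). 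The upper and lower bounds of~\cite{GHS11} are then redone inside this framework. So the list of ``functionals to control'' is right in spirit, but the work is a genuine generalisation of~\cite{GHS11} to multitype branching, not a bookkeeping check. A minor additional point: the survival probability needed is for a random initial position $-E$ with $E$ exponential, and the paper handles this by first establishing bounds uniform over $s_0\in[-C/\sqrt{\eps_n},0]$ and then integrating.
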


The existence of $\rho$ and $\alpha^*$ corresponding to $f$ is proved in  Proposition~\ref{prop:conv}.
 There we will also see that $\lim_{t \to \infty}\rho_t(\alpha_t^*)=1$.
The following corollary exemplifies Theorem~\ref{thm:vary_f} for linear attachment rules. We denote
\[
\betac(\gamma)=\frac{(\frac{1}{2}-\gamma)^2}{1-\gamma}\quad \text{and} \quad \gammac(\beta)=\frac{1}{2} \big(1-\beta -\sqrt{\beta^2+2\beta}\big).
\]

\begin{corollary}[Linear attachment rule]\label{linear_cor}
Let $\gamma \in [0,\frac{1}{2})$, $\beta \in (0,1]$. Then
\begin{equation}
\lim_{\beta \downarrow \beta_c(\gamma)} \sqrt{\beta-\beta_c(\gamma)} \log \size(1,\gamma \cdot+\beta) = - \frac{\pi}{2\sqrt{1-\gamma}}.\label{beta_cor}
\end{equation}
If $\beta \in (0,1/4]$, then
\begin{equation}
\lim_{\gamma \downarrow \gamma_c(\beta)} \sqrt{\gamma-\gamma_c(\beta)} \log \size(1,\gamma \cdot+\beta) = - \frac{\pi}{2 (\beta^2+2\beta)^{1/4}}.\label{gamma_cor}
\end{equation}
\end{corollary}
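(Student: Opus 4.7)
The plan is to apply Theorem~\ref{thm:vary_f} to carefully chosen families of linear attachment rules. For~\eqref{beta_cor} I fix $\gamma \in [0,1/2)$ and take $f_t(k) = \gamma k + \beta_t$ with $\beta_t \downarrow \betac(\gamma)$; the pointwise limit $f(k) = \gamma k + \betac(\gamma)$ is still an attachment rule with the same $\gamma$, it sits exactly on the critical curve so $\size(1,f)=0$ by the Dereich--M\"orters criterion, while $\size(1,f_t)>0$ for every $t$. For~\eqref{gamma_cor} I fix $\beta \in (0,1/4]$ and take $f_t(k) = \gamma_t k + \beta$ with $\gamma_t \downarrow \gammac(\beta)$; here the restriction $\beta \le 1/4$ ensures $\gammac(\beta) \in [0,1/2)$ and hence $\gamma_t < 1/2$ for large $t$. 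In both cases $(f_t)$ is a pointwise decreasing sequence of attachment rules satisfying the hypotheses of Theorem~\ref{thm:vary_f}.

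The content of the corollary is then to translate the conclusion of Theorem~\ref{thm:vary_f}, which is phrased in terms of $\log \rho_t(\alpha_t^*)$, into the natural parameter $\beta_t - \betac(\gamma)$ or $\gamma_t - \gammac(\beta)$. For linear rules the score operator introduced in Section~2 is sufficiently concrete to give $\rho(\alpha)$ for $f(k) = \gamma k + \beta$ in closed form. From this I would extract explicit expressions for the minimizer $\alpha^* = \alpha^*(\gamma,\beta)$ and for $\rho(\alpha^*)$ and $\rho''(\alpha^*)$. On the critical curve $\beta = \betac(\gamma)$ one verifies $\rho(\alpha^*) = 1$, consistent with Dereich--M\"orters's characterisation of the critical line.

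With these closed forms in hand, a Taylor expansion around the critical curve yields
\[
\log \rho_t(\alpha_t^*) = C_\gamma \,(\beta_t - \betac(\gamma)) + o(\beta_t - \betac(\gamma))
\]
for an explicit positive coefficient $C_\gamma$, and analogously $\log \rho_t(\alpha_t^*) = \widetilde C_\beta (\gamma_t - \gammac(\beta)) + o(\gamma_t - \gammac(\beta))$ when $\gamma_t$ is the varying parameter. Substituting into Theorem~\ref{thm:vary_f} reduces~\eqref{beta_cor} and~\eqref{gamma_cor} to the algebraic identities
\[
\alpha^* \sqrt{\tfrac{\pi^2 \rho''(\alpha^*)}{2 C_\gamma}} = \tfrac{\pi}{2\sqrt{1-\gamma}}, \qquad
\alpha^* \sqrt{\tfrac{\pi^2 \rho''(\alpha^*)}{2 \widetilde C_\beta}} = \tfrac{\pi}{2(\beta^2+2\beta)^{1/4}},
\]
to be checked at the critical parameter values $\beta = \betac(\gamma)$ and $\gamma = \gammac(\beta)$ respectively.

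The main obstacle is the explicit computation of $\alpha^*(\gamma,\beta)$, $\rho''(\alpha^*)$ and of the sensitivity coefficients $C_\gamma$ and $\widetilde C_\beta$ from the closed form of $\rho$. Each ingredient is obtained by routine but increasingly cumbersome differentiation of the closed-form expression for $\rho(\alpha)$, and substantial cancellation is required to reduce the final expressions to the compact constants $\pi/(2\sqrt{1-\gamma})$ and $\pi/(2(\beta^2+2\beta)^{1/4})$ stated in the corollary. Beyond this bookkeeping, no further probabilistic input is needed, since the analytic heavy lifting has already been done in Theorem~\ref{thm:vary_f}.
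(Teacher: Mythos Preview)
Your proposal is correct and follows precisely the route taken in the paper: one uses the explicit closed form for $\rho(\alpha)$ in the linear case (the largest eigenvalue of the $2\times 2$ matrix from \cite{DerMoe13}), finds that in fact $\alpha^*=\tfrac12$ independently of $(\gamma,\beta)$, computes $\rho''(\tfrac12)$ and the partial derivatives $\partial_\beta\rho(\tfrac12;\betac)$, $\partial_\gamma\rho(\tfrac12;\gammac)$ at criticality, Taylor expands $\log\rho_t(\alpha_t^*)$, and substitutes into Theorem~\ref{thm:vary_f}. The algebra is cleaner than you anticipate because of the symmetry $\alpha^*=\tfrac12$.
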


\begin{figure}[h]
\centering
\includegraphics[trim = 0.5cm 0.5cm 0.5cm 0.5cm, width=9cm]{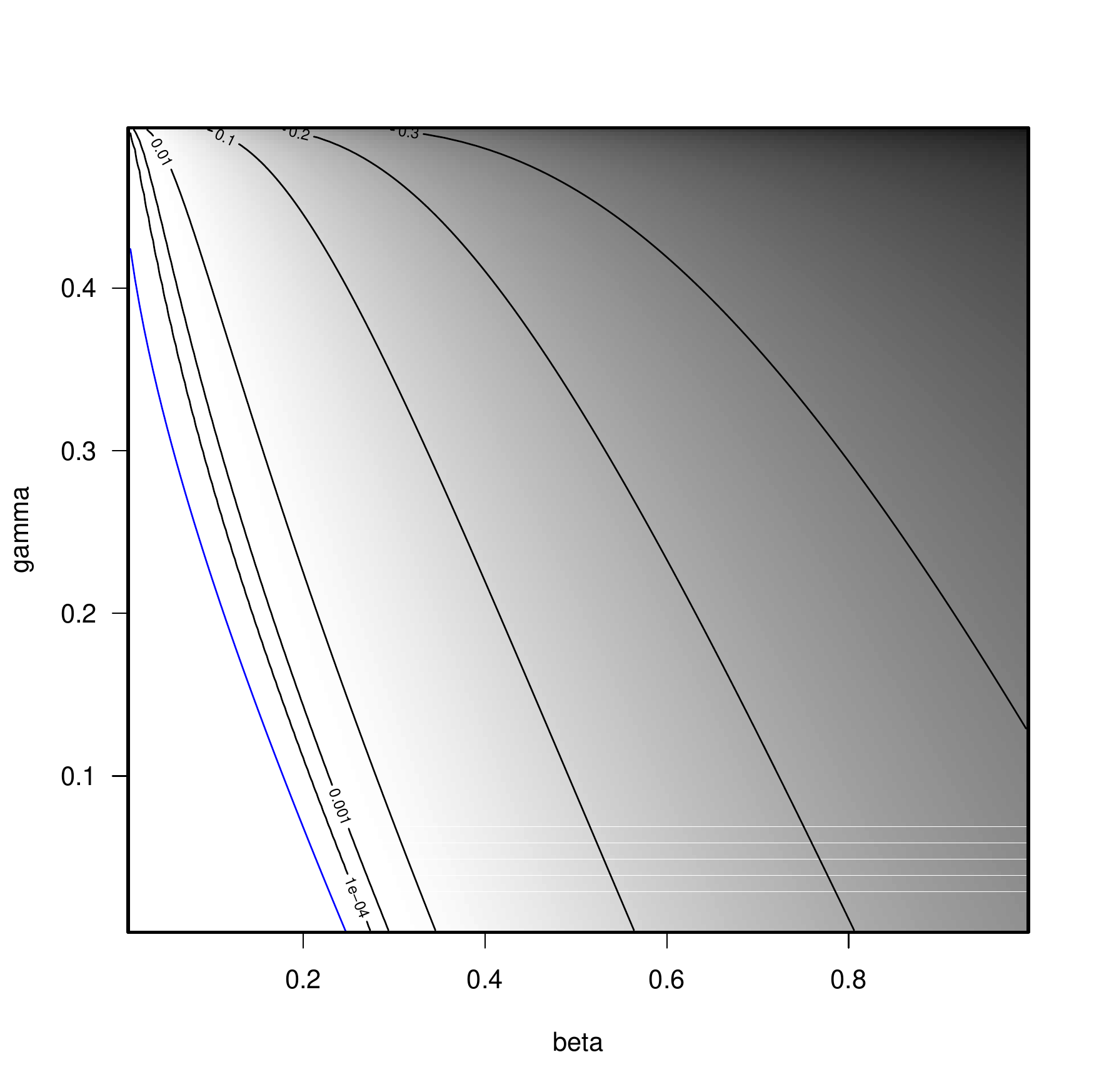}
\caption{Theorem~\ref{thm:vary_f} allows us to approach the critical line from above, from the right and from any angle in between. }
\end{figure} 

\newpage
\begin{remark}
Two cases in our phase diagram are covered in the work of Riordan~\cite{Rio05}. The first of these cases corresponds to an approximation from the right of the point $\beta=0, \gamma=\frac12$ which is equivalent to the original Barabasi-Albert, or LCD, model.\footnote{There is a difference in the set-up of the LCD model and our model, as the former uses a fixed number of connections for every new vertex. This technical  difference does not affect the results beyond the form of the constants involved.}  Note that our results refer to the subcritical case $\gamma<\frac12$ and the critical case $\gamma=\frac12$ is not included. The second is the case $\beta=\frac14, \gamma=0$, the Dubins model, in which there is no preferential attachment  and our results are consistent with those of Riordan~\cite{Rio05}. 
\smallskip

Corollary~\ref{linear_cor} allows a quantitative comparison of the decay of the giant component for different models. The smaller $\gamma$  (or the larger~$\tau$), the slower is the decay. The LCD model, or equivalent models with $\gamma=\frac{1}{2}$, have faster decay of the size of the giant component than  
preferential attachment networks with attachment rules satisfying $\gamma <\frac{1}{2}$.
\end{remark}

Throughout, we will use the following notation: For every integer $k\in \N$, we write $[k]=\{1,\dots,k\}$, $\R_+:=[0,\infty)$, $\N_0:=\N \cup \{0\}$.

The remaining paper is structured as follows: We prove Theorems~\ref{thm:percol} and \ref{thm:vary_f} simultaneously. In Section~\ref{sec:preparations} we collect several auxiliary results that we need later on. 
In particular, in Section~\ref{sec:BP_def} we recall the relevant results from~\cite{DerMoe13} who relate the size of the giant component to the survival probability of a 
multitype branching random walk with killing. 
In Section~\ref{sec:mto} we derive the main tool to analyse these branching random walks, a version of the well-known many-to-one lemma.
In Section~\ref{ssn:asymp_moments} we collect various moment estimates, while in 
Section~\ref{sec:mogulskii} we state a large deviation result originally due to Mogulskii in a suitable adaptation to our setting. 
The actual proofs of Theorems~\ref{thm:percol} and~\ref{thm:vary_f} are
split into an upper bound carried out in Section~\ref{sec:upper_bound} and 
a lower bound in Section~\ref{sec:lower_bound}.
In Section~\ref{sec:linear} we show how to derive Corollary~\ref{linear_cor}
from Theorem~\ref{thm:vary_f}.
Finally, in Appendix~\ref{mogulskii_proof} we prove the large deviation result, Theorem~\ref{thm:mogulskii}.

\section{Proofs: General Preparations}\label{sec:preparations}

\subsection{The approximating branching process}\label{sec:BP_def}

We introduce a pure jump Markov process 
with generator
\[
Lg(k):=f(k)\big( g(k+1)-g(k)\big).
\]
This defines an increasing, integer-valued process, which jumps from $k$ to $k+1$ after an exponential waiting time with mean $1/f(k)$, independently of the previous jumps. 
Under the probability $P$ we denote by $(Z_t\colon t \ge 0)$ the process started in zero,  by $(\hat{Z}_t\colon t\ge 0)$ the process started in one, and by $(Z_t^{\ssup \tau} \colon t \ge 0)$
the process started in zero conditioned to have a point at $\tau\geq0$.\smallskip

This process is used to define a multitype branching random walk with type space $\typespace:=[0,\infty)\cup\{\ell\}$, 
where $\ell$ is a non-numerical symbol for `left'. 
A particle in location $x \in \R$ and of type $\type \in \typespace$, produces offspring to its left whose displacements have the same distribution as the points of the Poisson point process with intensity measure
\[
e^t E[f(Z_{-t})] \mathbbm{1}_{(-\infty,0]}(t) \, dt.
\] 
The type of an offspring on the left equals the distance to its parent. 
\smallskip

The distribution of the offspring to the right depends on the type of the particle. When the particle is of type $\ell$, then the relative positions of its right offspring follow the same distribution as the jump times of $(Z_t\colon t\ge 0)$. When the particle is of type $\type \ge 0$, then the displacements follow the same distribution as the jump times of $(Z_t^{\sss (\type)}-\mathbbm{1}_{[\type,\infty)}(t)\colon t\ge 0)$. All offspring on the right are of type $\ell$.
\smallskip

The offspring to the right do \emph{not} form a Poisson point process. The more particles are born, the higher the rate of new particles arriving. Moreover, the total number of particles produced is infinite without accumulation point. 
The expected distance between a particle and its $k$th offspring on the right equals $\sum_{j=0}^{k-1}\frac{1}{f(j)}= \sum_{j=0}^{k-1} \frac{1}{f(j)/j} \frac{1}{j}$. Since $\lim_{j \to \infty} \frac{f(j)}{j} = \gamma$, this distance behaves asymptotically like $\gamma^{-1} \log(k)$ when $\gamma \not=0$ and like $k$ when $\gamma =0$.
\smallskip 

We call the described process \emph{idealized branching random walk} ($\IBRW$) in accordance with \cite{DerMoe13}. Dereich and M\"orters \cite{DerMoe13} show that the genealogical tree of the $\IBRW$ is related to the local neighbourhood of a vertex in $\graph_n$. 
To obtain a branching process approximation to $(\graph_n(p)\colon n \in \N)$, we define the \emph{percolated} $\IBRW$ by associating to every offspring in the $\IBRW$ an independent Bernoulli$(p)$ random variable. 
If the random variable is zero, we delete the offspring together with its descendants. Otherwise, the offspring is retained in the percolated $\IBRW$. 
When the percolated $\IBRW$ is started with one particle in location $x$ and type $\type$, then we write $P_{(x,\type)}^p$ for its distribution and $E_{(x,\type)}^p$ for the corresponding integral operator; 
\smash{$P_{(x,\type)}:=P_{(x,\type)}^1$, $E_{(x,\type)}:=E_{(x,\type)}^1$.}
\smallskip

The percolated $\IBRW$ can be interpreted as a labelled tree $\tree$ where every node represents a particle and is connected to its children and (apart from the root) to its parent. 
The vertices are identified as finite sequences of natural numbers $x=j_1\ldots j_k$, including the empty sequence $\emptyset$ which denotes the root. 
We concatenate sequences $x=i_1\ldots i_k$ and $y=j_1\ldots j_m$ to form the sequence $xy=i_1\ldots i_kj_1\ldots j_m$. 
When, for $j \in \N$, $xj$ is a vertex in $\tree$, then $x,x1,\ldots,x(j-1)$ are also vertices in the tree and $x=:\parent(xj)$ is the parent of $xj$. 
The length $|x|=k$ is the generation of $x$. 
For $|x|\ge k$, we abbreviate $\parent^k(x)$ for the $k$-fold composition of $\parent(\cdot)$. 
The ancestor of $x$ in generation $k$ is denoted by $x_k$, i.e.\ $x_k:=\parent^{n-k}(x)$ when $|x|=n$. 
In particular, $x_0$ always denotes the root. 
To every vertex we associate two functions, $\loc$ and $\type$. Here $\loc(x)$ is the location of the particle on the real line and $\type(x)$ denotes its type.
\smallskip

To obtain a branching process approximation to the local neighbourhood of a vertex in $(\graph_n(p)\colon n \in \N)$, we consider the percolated $\IBRW$ with a killing barrier at zero. That is, every particle with location on the nonnegative half-line is deleted together with its descendants. Dereich and M\"orters prove the following identification.

\begin{theorem}[{\bf Dereich and M\"orters \cite{DerMoe13}}]
For all $p\in [0,1]$ and attachment rules $f$, $\size(p,f)$ equals the survival probability of the percolated $\IBRW$ with a killing barrier at zero, started with one particle of type $\ell$ whose location is given by $-E$, where $E$ is an exponential random variable with mean one.
\end{theorem}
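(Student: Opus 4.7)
The plan is to prove a local weak limit theorem: under a logarithmic rescaling of birth times, the component of a uniformly chosen vertex $V \in [n]$ in $\graph_n(p)$, explored by breadth-first search, converges in distribution as $n \to \infty$ to the genealogical tree of the percolated $\IBRW$ with killing at $0$, initial particle of type $\ell$ at location $-E$ with $E \sim \mathrm{Exp}(1)$. Combined with the uniqueness of the giant component in the supercritical regime (available from \cite{DerMoe13}), this yields $|\connect_n|/n \to \size(p,f)$, the right-hand side equalling the probability that the exploration grows unboundedly --- i.e.\ the survival probability of the limiting $\IBRW$.

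For the rescaling, identify vertex $i$ with position $s_i := \log(i/n) \in (-\log n,0]$. A uniform $V$ then has $s_V$ converging in distribution to $-E$, which pins down both the initial location and the killing barrier at $0$: positions above $0$ would correspond to ``vertices born after time $n$'', which do not exist. The root carries type $\ell$ because, in contrast to a newly revealed offspring of a known particle, we hold no prior information on the history of $V$'s indegree.

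The two offspring distributions then drop out of the independent-edge construction of the preferential attachment graph. For \emph{older neighbours} $m < V$, vertex $V$ attaches independently with probability $f(\mathrm{indeg}_{V-1}(m))/(V-1)$. Concentration arguments show that the indegree chain of $m$ across the window $[m,V-1]$ converges, under log-time rescaling, to the Markov jump process $(Z_t)$ with generator $L$. A standard Poisson limit then produces precisely the intensity $e^t\,\E[f(Z_{-t})]\mathbbm{1}_{(-\infty,0]}(t)\,dt$ for the point process of older neighbours relative to $s_V$. Recording the distance $\tau$ to the parent as the offspring's type is exactly what enforces the forced jump at time $\tau$ in $Z_t^{(\tau)}$ representing the parent-child edge in the child's own indegree chain. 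For \emph{younger neighbours} $j > V$, the attachment rates $f(\mathrm{indeg}_{j-1}(V))/(j-1)$ give, after log-time rescaling, arrivals at the jump times of $(Z_t)$ if $V$ has type $\ell$, or of $Z_t^{(\tau)}$ with the forced jump at $\tau$ removed if $V$ has type $\tau$. Independent Bernoulli$(p)$ edge retention passes through the coupling unchanged, yielding the percolated $\IBRW$.

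The main obstacle is making all these couplings uniform along the exploration. Specifically, one needs (i) concentration of indegree trajectories so that the random rates $f(\mathrm{indeg}_{\cdot}(\cdot))/\cdot$ may be replaced by the deterministic Markov rates of $(Z_t)$; (ii) asymptotic independence of the neighbourhoods of distinct revealed vertices, which rules out back-edges and vertex collisions at bounded depth and justifies the simplification that all right-offspring are of type $\ell$; and (iii) negligibility of vertices with extreme birth times, which sit close to $-\log n$ or $0$ in rescaled coordinates and could in principle distort the limit. To pass from the local limit to membership in the giant one combines a truncation argument (subcritical components are $o(n)$) with uniqueness of the giant, so that an unbounded BFS exploration forces $V$ into the unique macroscopic component and conversely.
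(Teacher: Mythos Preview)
The present paper does not prove this theorem; it is quoted as a result of Dereich and M\"orters \cite{DerMoe13} and invoked as a black box. Your sketch is a faithful outline of the local weak convergence argument actually carried out in \cite{DerMoe13}: the logarithmic rescaling $s_i = \log(i/n)$, the $\mathrm{Exp}(1)$ limit for the root position and the resulting killing barrier at~$0$, the Poisson limit for older neighbours with intensity $e^t\,E[f(Z_{-t})]\,dt$, the jump-process description of younger neighbours, and the use of types to encode conditioning on the already-revealed edge to the parent all match the original construction.

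The one place where your sketch is genuinely thin is the passage from the local limit to $|\connect_n|/n \to \theta$ in probability. Local weak convergence delivers $P(|\mathcal{C}(V)| \geq k) \to P(\text{killed IBRW survives $k$ generations})$ for each fixed $k$, which after $k\to\infty$ yields the upper bound. The matching lower bound --- that a vertex whose exploration survives many generations actually lies in the giant component --- is not a consequence of uniqueness and truncation alone; in \cite{DerMoe13} it is handled by a sprinkling argument that, with high probability, connects any two vertices with large local neighbourhoods through a common early vertex. Your last paragraph gestures at the right ingredients but does not name this mechanism.
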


Next we collect some spectral properties that will be used in the analysis of the IBRW.

Denote by $C(\typespace)$ the Banach space of bounded, continuous functions on $\typespace$ equipped with the supremum norm $\supnorm{\cdot}$. For $\alpha \in (0,1)$, we consider the score operator
\begin{equation}\label{def:operator}
A_{\alpha}^pg(\type_0) =E_{(0,\type_0)}^p\Big[\sum_{|x|=1} g(\type(x)) e^{-\alpha \loc(x)} \Big], \qquad \type_0 \in \typespace,
\end{equation}
on $C(\typespace)$, $A_{\alpha}:=A_{\alpha}^1$. The spectral radius of $A_{\alpha}^p$ is denoted by $\rho^p(\alpha)$. The dependence of $A_{\alpha}^p$ on the attachment rule $f$ is suppressed in notation but it will always be clear from the context which $f$ is considered. Since by definition,
\begin{equation}\label{eq:ApvsA}
A_{\alpha}^p g(\type_0)=E_{(0,\type_0)}^p\Big[\sum_{|x|=1} g(\type(x)) e^{-\alpha \loc(x)} \Big]= p E_{(0,\type_0)}\Big[\sum_{|x|=1} g(\type(x)) e^{-\alpha \loc(x)} \Big]=p A_{\alpha}g(\tau_0),
\end{equation}
it suffices to analyse $A_{\alpha}$. We write $\mathbf{1}$ for the constant function with value $1$ and let $\interval:=(\gamma,1-\gamma)$ for $\gamma <\frac{1}{2}$ and $\interval=\emptyset$ for $\gamma \ge \frac{1}{2}$.

\begin{lemma}\label{lem:SpecProp} 
Let $p \in (0,1]$.
If $\gamma \ge \frac{1}{2}$, then $A_{\alpha}^p\mathbf{1}(0)=\infty$. If $\gamma <\frac{1}{2}$, then the following holds:
\begin{itemize}
\item[{\rm(i)}] $\rho^p(\alpha)$ is finite for all $\alpha \in \interval$, $\rho^p(\alpha)=p\rho(\alpha)$ and $\rho(\alpha)\to \infty$ for $\alpha \to \partial \interval$.
\item[{\rm(ii)}]There exists a unique positive eigenfunction $\eigenfct_{\alpha}$ of $A_{\alpha}^p$ corresponding to $\rho^p(\alpha)$ with $\supnorm{\eigenfct_{\alpha}}=1$. Moreover, $\eigenfct_{\alpha}$ does not depend on the retention probability $p$ and $\min_{\tau \in \typespace} v_\alpha(\tau) > 0$.
\item[{\rm(iii)}] The function $\rho$ is twice differentiable on $\interval$ with 
\begin{equation}\label{diff_eq}
\rho^{(i)}(\alpha)=E_{(0,\type_0)}\Big[\sum_{|x|=1} \frac{\eigenfct_{\alpha}(\type(x))}{\eigenfct_{\alpha}(\type_0)} e^{-\alpha \loc(x)} (-\loc(x))^{i}\Big] \qquad \text{for all } \type_0 \in \typespace, i \in \{1,2\}.
\end{equation}
\item[{\rm (iv)}] The function $\rho$ is strictly convex on $\interval$ and there exists a unique minimizer $\alpha^* \in \interval$.
\item[{\rm (v)}] For any $\tau \in [0,\infty)$ 
\[ v_\alpha(\ell) \leq  v_\alpha(\tau) \leq v_\alpha(0)  . \]
\end{itemize}
\end{lemma}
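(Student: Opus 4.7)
My proof plan rests on an observation that dramatically simplifies matters: the score operator $A_\alpha$ has rank at most two on $C(\typespace)$. Reading off the offspring sums, I expect to find, for any test function $g \in C(\typespace)$,
\[
A_\alpha g(\tau_0) = c_\alpha(g) + g(\ell)\, \phi_\alpha(\tau_0),
\]
where $c_\alpha(g) := \int_0^\infty g(s) e^{-(1-\alpha) s} E[f(Z_s)]\,ds$ captures the left offspring (whose Poisson intensity is independent of $\tau_0$ and whose types equal their displacements) and $\phi_\alpha(\tau_0)$ is the analogous sum over right offspring (all of type $\ell$). Hence every eigenfunction of $A_\alpha$ lies in $\mathrm{span}(\mathbf 1, \phi_\alpha)$, and the eigenvalue problem reduces to finding the principal eigenvalue of an explicit $2 \times 2$ positive matrix $M_\alpha$. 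The identity $A_\alpha^p = p A_\alpha$ already noted handles the $p$-dependence at no cost.

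For (i), the $\alpha$-dependent entries of $M_\alpha$ are $c_\alpha(\mathbf 1) = \int_0^\infty e^{-(1-\alpha) s} E[f(Z_s)]\,ds$ and $\phi_\alpha(\ell) = \sum_{k \geq 1} \prod_{j=0}^{k-1} f(j)/(f(j) + \alpha)$. From $\tfrac{d}{ds} E[Z_s] = E[f(Z_s)]$ together with $f(k)/k \to \gamma$, a Gr\"onwall argument gives $E[f(Z_s)] \asymp e^{\gamma s}$, so $c_\alpha(\mathbf 1) < \infty$ exactly when $\alpha < 1-\gamma$. Using $f(j) \sim \gamma j$ one finds $\prod_{j<k} f(j)/(f(j)+\alpha) \asymp k^{-\alpha/\gamma}$, hence $\phi_\alpha(\ell) < \infty$ exactly when $\alpha > \gamma$. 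Both conditions together identify the range as $\interval = (\gamma, 1-\gamma)$; for $\gamma \geq \tfrac{1}{2}$ this is empty and at least one entry is already infinite, so $A_\alpha \mathbf 1(0) = \infty$. Blow-up of $\rho(\alpha)$ at the endpoints then follows from divergence of the matrix entries as $\alpha \to \partial \interval$.

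For (ii) and (iv), Perron--Frobenius on the positive $2 \times 2$ matrix $M_\alpha$ yields a unique simple principal eigenvalue $\rho(\alpha) > 0$ with a positive eigenvector $(a,b)$; then $v_\alpha := (a \mathbf 1 + b \phi_\alpha)/\|a \mathbf 1 + b \phi_\alpha\|$ is the unique sup-norm-one positive eigenfunction of $A_\alpha$, is independent of $p$, and is bounded away from zero since $a, b > 0$ and both $\mathbf 1, \phi_\alpha$ are strictly positive. Strict convexity of $\rho$ I derive from the many-to-one representation $\rho(\alpha) = \lim_n \tfrac{1}{n} \log E\bigl[\sum_{|x|=n} e^{-\alpha S(x)}\bigr]$, valid for this irreducible positive eigenvalue problem; the right-hand side is log-convex in $\alpha$ by H\"older, with strictness from non-degeneracy of $S(x)$. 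Combined with the endpoint blow-up, this forces a unique minimiser $\alpha^\ast \in \interval$.

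For (iii), $\rho(\alpha)$ is the larger root of a quadratic with real-analytic coefficients, hence real-analytic on $\interval$; the pointwise derivative identity then comes from differentiating $A_\alpha v_\alpha = \rho(\alpha) v_\alpha$ in $\alpha$ and exploiting simplicity of the principal eigenvalue together with the rank-two structure to isolate $\rho^{(i)}(\alpha)$. Finally, (v) reduces via $v_\alpha = a \mathbf 1 + b \phi_\alpha$ with $a, b > 0$ to $\phi_\alpha(\ell) \leq \phi_\alpha(\tau) \leq \phi_\alpha(0)$: the right inequality because at $\tau_0 = 0$ the right-offspring process effectively starts at value $1$ (with jump rates $f(1), f(2), \ldots$) and so jumps stochastically earlier than at $\tau_0 = \ell$ (with rates $f(0), f(1), \ldots$); the left inequality by a coupling showing that the $\tau$-conditioning only inserts additional jumps relative to the unconditional $Z$-process. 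The main obstacle I anticipate is justifying the pointwise-in-$\tau_0$ form of (iii) rather than merely the $u_\alpha$-weighted version that naive perturbation theory produces; resolving this will hinge on a careful identity within the two-dimensional range of $A_\alpha$.
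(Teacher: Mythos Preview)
Your rank-two observation is correct and genuinely different from the paper's route. The paper never exploits that $A_\alpha g(\tau_0)=c_\alpha(g)\cdot\mathbf 1(\tau_0)+g(\ell)\,\phi_\alpha(\tau_0)$; instead it sandwiches $A_\alpha$ between two explicit $2\times 2$ matrices $\underline A_\alpha,\overline A_\alpha$ (only to locate the finiteness interval and the blow-up at $\partial\interval$), then treats $A_\alpha$ as an infinite-dimensional compact strongly positive operator and invokes Krein--Rutman for the eigenfunction and Kato's perturbation theory for the differentiability. Your reduction to the Perron root of an explicit positive matrix $M_\alpha$ is more elementary for (i), (ii), and the mere smoothness part of (iii), and your argument for (v) via $v_\alpha=a\mathbf 1+b\phi_\alpha$ reduces cleanly to monotonicity of $\phi_\alpha$; the paper instead obtains (v) directly from the eigenvalue equation together with the monotonicity-in-types coupling established in \cite{DerMoe13}.

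Two places where your sketch needs strengthening. First, your H\"older argument for (iv) yields convexity of $\log\rho$ but not \emph{strict} convexity: each pre-limit function $\frac1n\log A_\alpha^n\mathbf 1(\tau_0)$ is strictly convex, but strictness does not survive the limit in general. The paper gets strictness directly from the $i=2$ case of (iii), since the right-hand side is a strictly positive second moment; once you have (iii) you should argue the same way. Second, for (v) you assert that the $\tau$-conditioning ``only inserts additional jumps'' relative to $Z$, giving $\phi_\alpha(\ell)\le\phi_\alpha(\tau)$, and that $\phi_\alpha(\tau)\le\phi_\alpha(0)$ by a rate comparison. The lower bound is fine, but the upper bound for \emph{every} $\tau\ge0$ (not just $\tau=0$) requires the full type-monotonicity coupling of $Z^{(\tau)}$ from \cite{DerMoe13}; you should cite or reprove that coupling rather than only compare the endpoints.

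Finally, you are right to flag the pointwise-in-$\tau_0$ identity in (iii) as the delicate step: naive perturbation of $A_\alpha v_\alpha=\rho(\alpha)v_\alpha$ gives $\rho'(\alpha)$ only after pairing with the left eigenfunction, whereas (iii) asserts that $\tau_0\mapsto E_{(0,\tau_0)}\big[\sum_{|x|=1}v_\alpha(\tau(x))\,(-S(x))^i e^{-\alpha S(x)}\big]$ is itself a multiple of $v_\alpha$. In your rank-two framework this amounts to showing $\partial_\alpha\phi_\alpha\in\mathrm{span}(\mathbf 1,\phi_\alpha)$, which is not automatic from the structure you have displayed. The paper does not spell this out either, simply referring to Kato; so your plan here is no less complete than the paper's, but you should be aware that the rank-two decomposition alone does not obviously close this gap.
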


\begin{proof}
By \eqref{eq:ApvsA}, it suffices to prove Lemma~\ref{lem:SpecProp} in the case $p=1$. For that case, it was shown in \cite[Lemma 3.1]{DerMoe13} that $A_{\alpha}1(0)<\infty$ is equivalent to $A_{\alpha}$ being a strongly positive, compact operator with $A_{\alpha}g \in C(\typespace)$ for all $g \in C(\typespace)$. Moreover, it is proved that for all $g \in C(\typespace)$, $g\ge 0$, we have $\underline{A}_{\alpha}g \le A_{\alpha}g \le \overline{A}_{\alpha}g$, where
\[
\underline{A}_{\alpha}=\begin{pmatrix}
\mUnbiased(\alpha) & \mUnbiased(1-\alpha)\\
\mUnbiased(\alpha) & \mUnbiased(1-\alpha)\end{pmatrix}, \qquad 
\overline{A}_{\alpha}=\begin{pmatrix}
\mUnbiased(\alpha) & \mUnbiased(1-\alpha)\\
\mBiased(\alpha) & \mUnbiased(1-\alpha) \end{pmatrix},
\]
and
\[
\mUnbiased(\alpha)=\int_0^{\infty} e^{-\alpha t} E[f(Z_t)] \, dt, \qquad \mBiased(\alpha)=\int_0^{\infty} e^{-\alpha t} E[f(\hat{Z}_t)]\, dt.
\]
Here, for example, $\overline{A}_{\alpha}g$ means that for all $\type \ge 0$,
\[
\overline{A}_{\alpha}g(\type)=\int_0^{\infty} g(\ell) e^{-\alpha t} E[f(\hat{Z}_t)]\, dt+\int_0^{\infty} g(t) e^{-(1-\alpha)t}E[f(Z_t)]\, dt.
\]
We have, $\rho(\alpha)\ge \rho(\underline{A}_{\alpha})=\mUnbiased(\alpha)+\mUnbiased(1-\alpha)$.
The values of $\mUnbiased$ and $\mBiased$ are identified as (cf.\ proof of Proposition 1.10 in \cite{DerMoe13})
\begin{equation}\label{ac_fctns}
\mUnbiased(\alpha)=\sum_{k=0}^{\infty} \prod_{j=0}^k \frac{f(j)}{f(j)+\alpha}, \qquad \mBiased(\alpha)=\sum_{k=0}^{\infty} \prod_{j=0}^{k} \frac{f(j+1)}{f(j+1)+\alpha}.
\end{equation}
We analyse the convergence properties of $\mUnbiased(\alpha)$. Using $\log(1+x) \le x$ for $x \ge 0$ and $f(j)=\sum_{i=0}^{j-1}\triangle f(i)+f(0) \ge j\gamma$, we estimate
\begin{align*}
\prod_{j=0}^k \frac{f(j)}{f(j)+\alpha}&=\exp\Big(-\sum_{j=0}^k \log\Big(1+\frac{\alpha}{f(j)}\Big)\Big) \ge \exp\Big(-\sum_{j=0}^k \frac{\alpha}{f(j)}\Big) \\
&\ge \exp\Big(-\sum_{j=1}^k \frac{\alpha}{j \gamma}-\frac{\alpha}{f(0)}\Big) =\exp\Big(-\frac{\alpha}{\gamma} \log k - C-\delta_k\Big) 
\end{align*}
for some $C>0$ and a null sequence $(\delta_k)_{k \in \N}$. In particular, there exists a $C'>0$ such that for all $k \in \N$
\[
\prod_{j=0}^k \frac{f(j)}{f(j)+\alpha} \ge C' \left(\frac{1}{k}\right)^{\alpha/\gamma}.
\]
Hence, $\mUnbiased(\alpha)=\infty$ for all $\alpha \le \gamma$. On the other hand, by Cauchy's condensation test, $\mUnbiased(\alpha)<\infty$ for all $\alpha > \gamma$. 
Hence, $A_{\alpha}1(0) < \infty$ if and only if $\alpha \in (\gamma,1-\gamma)=:\interval$. From now on we assume that $\interval\not=\emptyset$, i.e.~$\gamma<\frac{1}{2}$. 
For $\alpha \in \interval$, $\rho(\alpha)$ is finite and since $\rho(\alpha) \ge \mUnbiased(\alpha)+\mUnbiased(1-\alpha)$, we see that $\rho(\alpha) \to \infty$ for $\alpha \to \partial \interval$. 
The existence and uniqueness of eigenfunction $\eigenfct_{\alpha}\colon \typespace \to (0,\infty)$ follows from the Krein-Rutman theorem (see Theorem~3.1.3 in \cite{Pin95}). 
The fact that $A_{\alpha}$ is a strictly positive operator, implies $\min_{\type \in \typespace}\eigenfct_{\alpha}(\type)>0$. 
Since $\rho(\alpha)$ is an isolated eigenvalue with one-dimensional eigenspace, one can argue along the lines of Chapter II $\S$3 (in particular Remark 2.4) and Theorem II.$\S$5.5.4 of \cite{Kat95} that $\rho$ is twice differentiable and the derivative can be represented as in \eqref{diff_eq}. 
In particular, $\rho''(\alpha)>0$ for all $\alpha \in \interval$, hence, $\rho$ is strictly convex on $\interval$ and there exists a unique minimizer $\alpha^* \in \interval$.

To prove (v), consider $\tau \geq 0$. 
Let $\hat Z^\ssup{\tau}_t= Z^\ssup{\tau} - \1_{[\tau, \infty)}$ for any $\tau \in [0,\infty)$.
Then, 
by the definition of the eigenfunction,
\[ \begin{aligned} \rho(\alpha) v_\alpha(\tau)  & = A_\alpha v_\alpha(\tau)
= E_{(0,\tau)} \Big[ \sum_{|x|=1} v_\alpha(\tau(x)) e^{-\alpha S(x)}
\Big] \\
& = E_{(0,\tau)} \Big[ \sum_{|x|=1} v_\alpha(- S(x)) e^{-\alpha S(x)} \1_{\{ S(x) \leq 0 \}} \Big]  +v_\alpha(\ell)  
E\Big[ \int_0^\infty e^{-\alpha t} \, d \hat Z^\ssup{\tau}_t \Big] \\
& \geq E_{(0,\ell)} \Big[ \sum_{|x|=1} v_\alpha(- S(x)) e^{-\alpha S(x)} \1_{\{ S(x) \leq 0 \}} \Big]  + 
v_\alpha(\ell)  
E\Big[ \int_0^\infty e^{-\alpha t} \, d \tilde Z_t \Big] \\
&  =  A_\alpha v_\alpha(\ell) = \rho(\alpha) v_\alpha(\ell) , \end{aligned} \]
where we used in the inequality that
 the distribution of the positions to the left of the origin do not depend on the initial type and
for the second expectation we used the monotonicity in types proved in~\cite{DerMoe13}.
The upper bound holds by a similar argument.
 \end{proof}

The next proposition collects some of the consequences for the spectral radius if we consider  converging attachment rules.

\begin{proposition}\label{prop:conv} Let $(f_t)_{t \geq 0}$ be  pointwise decreasing attachment rules with $\gamma_t = \inf_{k \geq 0} \frac{f_t(k)}{k} < 1/2$. Define
\[ f(k) := \lim_{t \ra \infty} f_t(k) . \]
Then, $f$  is concave, $f(0) \leq 1$ and $f(k+1) - f(k) \leq 1$ for all $k \in \N_0$. Let $\gamma = \inf_{k \geq 0 } \frac{f(k)}{k}$ and let $\rho(\alpha)$ be the spectral radius of the operator associated to the branching process with attachment rule $f$, $\alpha^*$ be its unique minimizers and let $v_\alpha$ be the corresponding eigenfunction with $\| v_\alpha\| = 1$.
Define the same quantities with index $t$ when referring to the branching process associated to $f_t$, where we set $\rho_t(\alpha) = \infty$ if $t \notin (\gamma_t, 1- \gamma_t)$.
\begin{itemize}
	\item[(i)]
\[ \rho(\alpha) = \lim_{ t \ra \infty} \rho_t(\alpha) \mbox{ for all } t \in (\gamma , 1 -\gamma). \]
\item[(ii) ] Suppose that $\lim_{t \ra \infty} \rho_t(\alpha_t^*) = 1$, then  $\alpha_t^* \ra \alpha^*$  as $t \ra\infty$ and $\rho(\alpha^*) = 1$;
\item[(iii)] The quotient
\[
\frac{v_{t,\alpha_t^*}(0)}{v_{t,\alpha_t^*}(\ell)}
\]
is uniformly bounded in $t$ away from zero and infinity.
\item[(iv)] Moreover, as $t \ra \infty$,
\[ \rho_t''(\alpha_t^*) \ra \rho''(\alpha^*) . \]
\end{itemize}
\end{proposition}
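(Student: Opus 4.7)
The plan is to treat each assertion in turn, relying on the explicit operator representations from Lemma~\ref{lem:SpecProp} and standard perturbation theory for compact operators. First I verify the properties of the limit $f$: concavity, $f(0)\le 1$ and $\Delta f(k)\le 1$ pass to pointwise limits directly. Using $f_t\downarrow f$ pointwise and the identity $\gamma_t=\lim_{k\to\infty}\Delta f_t(k)$, exchanging the two monotone limits yields $\gamma_t\downarrow\gamma$, so $\interval_t\uparrow\interval$ and in particular $\interval_t\subseteq\interval$ for all $t$.

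For (i), fix $\alpha\in\interval$ and choose $t_0$ large enough that $\alpha\in\interval_t$ for $t\ge t_0$. The series representation \eqref{ac_fctns} shows that $\mUnbiased_t(\alpha)$ and $\mBiased_t(\alpha)$ are monotone in the attachment rule, and dominated convergence (with envelopes $\mUnbiased_{t_0}(\alpha)$, $\mBiased_{t_0}(\alpha)$) gives $\mUnbiased_t(\alpha)\downarrow\mUnbiased(\alpha)$ and $\mBiased_t(\alpha)\downarrow\mBiased(\alpha)$. The same bookkeeping for the full integral kernel yields operator-norm convergence $A_{t,\alpha}\to A_\alpha$ on $C(\typespace)$. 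Since each $A_{t,\alpha}$ is compact by Lemma~\ref{lem:SpecProp} and $\rho(\alpha)$ is a simple isolated eigenvalue of $A_\alpha$, continuity of the spectral radius under operator-norm convergence of compact operators yields $\rho_t(\alpha)\to\rho(\alpha)$.

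For (ii), I show that $(\alpha_t^*)$ stays in a compact subset of $\interval$. By monotonicity of $\mUnbiased$ in the attachment rule and $f_t\ge f$, one has $\rho_t(\alpha)\ge \mUnbiased(\alpha)+\mUnbiased(1-\alpha)$ for every $\alpha\in\interval$, a bound that diverges as $\alpha\to\partial\interval$; since $\rho_t(\alpha_t^*)\to 1$ by hypothesis, any accumulation point $\alpha^\infty$ of $(\alpha_t^*)$ must lie in $\interval$. Locally uniform convergence of the convex functions $\rho_t\to\rho$ on $\interval$ (from (i)) together with $\rho_t(\alpha_t^*)\to 1$ then gives $\rho(\alpha^\infty)=1$. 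For any $\alpha\in\interval$, $\rho(\alpha)=\lim\rho_t(\alpha)\ge\lim\rho_t(\alpha_t^*)=1$, so $\alpha^\infty$ minimizes $\rho$; uniqueness from Lemma~\ref{lem:SpecProp}(iv) forces $\alpha^\infty=\alpha^*$ and $\rho(\alpha^*)=1$.

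For (iii), the inequality $v_{t,\alpha_t^*}(0)\ge v_{t,\alpha_t^*}(\ell)$ is Lemma~\ref{lem:SpecProp}(v) applied to each $t$. For the matching upper bound I exploit the two-sided matrix domination $\underline A_{t,\alpha}\le A_{t,\alpha}\le \overline A_{t,\alpha}$ from the proof of Lemma~\ref{lem:SpecProp}: because the bounding $2\times 2$ matrices have strictly positive entries that are uniformly bounded above and below on the compact subset of $\interval$ containing the $\alpha_t^*$, a Doeblin-type comparison applied to one iterate of the eigenvalue equation gives $v_{t,\alpha_t^*}(0)\le C\,v_{t,\alpha_t^*}(\ell)$ with $C$ independent of $t$. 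Given (iii), part (iv) follows from formula \eqref{diff_eq} by dominated convergence: combine $\alpha_t^*\to\alpha^*$, uniform convergence $v_{t,\alpha_t^*}\to v_{\alpha^*}$ (perturbation theory for the isolated simple eigenvalue, using the operator-norm convergence of (i)), and a uniform-in-$t$ bound on the weighted moments $E_{(0,\tau_0)}[\sum_{|x|=1}e^{-\alpha_t^*\loc(x)}\loc(x)^2]$, which is available because the $\alpha_t^*$ stay bounded away from $\partial\interval$. I expect the main obstacle to be the upper bound in (iii): since $\typespace=[0,\infty)\cup\{\ell\}$ is non-compact, the bound cannot come from a naive Perron-Frobenius minorisation and really depends on the two-block structure of the offspring mechanism, which mixes $\ell$-types and positive types at a rate controlled by the bounding matrices.
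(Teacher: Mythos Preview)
Your proposal is correct and follows the same overall architecture as the paper: establish that $f$ is a limiting attachment rule, show $\rho_t\to\rho$ on $\interval$, locate the minimizers in a compact subinterval, and then invoke perturbation theory for the simple isolated eigenvalue. There are two places where your route differs from the paper's.

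For (i), the paper avoids operator-norm convergence entirely. It writes the spectral radius via Gelfand's formula, $\rho_t(\alpha)=\inf_n\|A_{t,\alpha}^n\mathbf{1}\|^{1/n}$, observes that $t\mapsto A_{t,\alpha}^n\mathbf{1}$ is monotone because the branching mechanism is monotone in the attachment rule, and then simply exchanges the two infima:
\[
\rho(\alpha)=\inf_n\|A_\alpha^n\mathbf{1}\|^{1/n}=\inf_n\inf_t\|A_{t,\alpha}^n\mathbf{1}\|^{1/n}=\inf_t\rho_t(\alpha)=\lim_{t\to\infty}\rho_t(\alpha).
\]
This is shorter and sidesteps the need to verify norm convergence of the kernels at this stage.

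For (iii), the paper does \emph{not} use the Doeblin-type matrix minorisation you sketch; instead it first proves (via Kato's perturbation theory, using operator convergence $A_{t,\alpha_t^*}\to A_{\alpha^*}$ and one-dimensionality of the eigenspace) that $v_{t,\alpha_t^*}\to v_{\alpha^*}$ uniformly, and then reads off the uniform bound on the ratio from strict positivity of $v_{\alpha^*}$. Your alternative actually works cleanly: applying the eigenvalue equation at $\tau_0=0$ and using $v_\alpha(\tau)\le v_\alpha(0)$ together with $\rho_t(\alpha)\ge\mUnbiased_t(\alpha)+\mUnbiased_t(1-\alpha)$ gives
\[
\frac{v_{t,\alpha_t^*}(0)}{v_{t,\alpha_t^*}(\ell)}\le\frac{\mBiased_t(\alpha_t^*)}{\rho_t(\alpha_t^*)-\mUnbiased_t(1-\alpha_t^*)}\le\frac{\mBiased_t(\alpha_t^*)}{\mUnbiased_t(\alpha_t^*)},
\]
which is uniformly bounded once the $\alpha_t^*$ are trapped in a compact subset of $\interval$. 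So the obstacle you flag is smaller than you feared. That said, since you (like the paper) need the eigenfunction convergence anyway to pass to the limit in \eqref{diff_eq} for part (iv), the Doeblin route does not ultimately save any machinery.
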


\begin{proof}
Since $(f_t)_{t \ge 0}$ is a pointwise decreasing sequence of positive functions 
\[
f(k):=\lim_{t \to \infty} f_t(k),\qquad \text{for all } k \in \N_0,
\]
exists. As an infimum of concave functions, $f\colon \N_0\to [0,\infty)$ is concave. The property $f(0)\le 1$ is inherited from $f_t$. The increments might in general only satisfy $f(k+1)-f(k) \le 1$, but the strict inequality is not needed for the analysis of the branching process and the corresponding operators.

The assumption $\rho_t(\alpha_t^*)>1$ implies that there exists a giant component for all $t\ge 0$. Hence,
\[
\mUnbiased_{f_t}(1/2)+\sqrt{\mUnbiased_{f_t}(1/2)\mBiased_{f_t}(1/2)}> 1 \quad \forall t \ge 0,
\]
by Proposition 1.10 in \cite{DerMoe13}. Here $a_{f_t}$ and $c_{f_t}$ are the functions given in \eqref{ac_fctns}. By monotone convergence, $\mUnbiased_{f}(1/2)+\sqrt{\mUnbiased_{f}(1/2)\mBiased_{f}(1/2)}\ge 1$ and, in particular, $f(0)>0$. Hence, $f$ is an attachment rule. From now on we add a subscript $t$ to all quantities corresponding to $f_t$ and no subscript for quantities corresponding to $f$.

The assumption $\gamma_t <\frac{1}{2}$ is needed to make the operator $A_{t,\alpha}$ exist for some $\alpha$. We have for all $t \le s$
\[
\gamma:=\lim_{k \to \infty} \frac{f(k)}{k} \le \lim_{k \to \infty} \frac{f_s(k)}{k} \le \lim_{k \to \infty} \frac{f_t(k)}{k} = \gamma_t <\frac{1}{2}.
\]
In particular, $(\gamma_t)_{t \ge 0}$ is a non-increasing sequence. Let $\interval=(\gamma,1-\gamma)$ and $\interval_t=(\gamma_t,1-\gamma_t)$ for $t \ge 0$. Then $\interval_t \subseteq \interval$ for all $t\ge 0$ and we write $\rho_t(\alpha)=\infty$ whenever $\alpha \not\in \interval_t$.

Let $\alpha \in \interval$. We use the monotonicity of the branching process in the attachment rule to derive
\begin{align*}
\rho(\alpha)&=\inf_{n \in \N}\|A_{\alpha}^n1\|^{\frac{1}{n}} = \inf_{n \in \N}\inf_{t \ge 0} \|A_{t,\alpha}^n1\|^{\frac{1}{n}}= \inf_{t \ge 0} \inf_{n \in \N}\|A_{t,\alpha}^n1\|^{\frac{1}{n}}\\
&= \inf_{t \ge 0} \rho_t(\alpha)=\lim_{t \to \infty}\rho_t(\alpha).
\end{align*}
In particular, for every $\alpha \in \interval$ there exists $t\ge 0$ such that $\rho_t(\alpha)<\infty$. Hence, $\bigcup_{t \ge 0}\interval_t =\interval$. Since $\rho(\alpha) \to \infty$ for $\alpha \to \partial \interval$, there exists an $\eps>0$ such that $\alpha^*,\alpha_t^* \in [\gamma+\eps,1-\gamma-\eps]=:\hat{\interval}$ for all $t \ge 0$. 

Convergence $\gamma_t \to \gamma$ implies the existence of a $t_0>0$ such that $\hat{\interval}\subseteq \interval_t$ for all $t\ge t_0$. In particular, we can consider the family $\rho,(\rho_t)_{t\ge t_0}$ of uniformly continuous functions on $\hat{\interval}$.

In the next step we argue that $\alpha_t^* \overset{t \to \infty}{\longrightarrow} \alpha^*$. Notice that by assumption $\rho(\alpha) =\lim_{t \to \infty}\rho_t(\alpha)\ge \lim_{t \to \infty} \rho_t(\alpha_t^*) \ge 1$ and 
\[
\rho(\alpha^*)\le \rho(\alpha_t^*) \le \rho_t(\alpha_t^*) \to 1.
\]
Hence, $\rho(\alpha^*)=1$. Suppose that $\alpha_t^* \overset{t \to \infty}{\longrightarrow} \alpha^*$ does not hold. Then there exists a $\delta>0$ and a subsequence $t_n\uparrow \infty$ such that $|\alpha_{t_n}^*-\alpha^*| \ge \delta$ for all $n \in \N$. Since $\rho$ is strictly convex with unique minimizer $\alpha^*$, we have
\[
\delta':=\min\{ \rho(\alpha^*-\delta), \rho(\alpha^*+\delta)\}-1 >0
\]
and $\rho(\alpha)\ge 1+\delta'$ for all $\alpha \not\in [\alpha^*-\delta,\alpha^*+\delta]$. In particular,
\[
\rho_{t_n}(\alpha^*) \ge \rho_{t_n}(\alpha_{t_n}^*)\ge \rho(\alpha_{t_n}^*) \ge 1+\delta'.
\]
Since the term on the left-hand side converges to $1$, this is a contradiction and the convergence $\alpha_t^* \overset{t \to \infty}{\longrightarrow} \alpha^*$ is established.

The fact that $\alpha_t^*$ converges to $\alpha^*$ and $f_t$ converges to $f$ implies that $A_{t,\alpha_t^*}$ converges to $A_{\alpha^*}$ because of the uniform continuity of the operator in $\alpha \in \hat{\interval}$. Since the eigenspaces of $\rho(\alpha^*)$ and $\rho_t(\alpha_t^*)$ are one dimensional, one can argue along the lines of Note 3 on Chapter II in \cite[pages 568-569]{Kat95} to see that $v_{t,\alpha_t^*}$ converges to $v_{\alpha^*}$. Since the functions are bounded,
\[
\frac{v_{t,\alpha_t^*}(0)}{v_{t,\alpha_t^*}(\ell)}
\]
is uniformly bounded in $t$ from zero and infinity. With the observed convergences and this uniform bound \eqref{diff_eq} now implies that also
\[
\rho_t''(\alpha_t^*) \to \rho''(\alpha^*), 
\]
as required.
\end{proof}

\subsection{The many-to-one Lemma}\label{sec:mto}
We first continue the analysis of the $\IBRW$. The following lemma is based on a spine construction which is known as Lyons' change of measure \cite{Ly94}. Recall the Ulam-Harris notation from Section~\ref{sec:BP_def}.

\begin{lemma}[{\bf{Many-to-one}}]\label{lem:mto}
For all $\alpha \in \interval$ there exists a probability measure $\P^{\alpha}$ on some measurable space, and a Markov process $((S_n,\type_n) \colon n \in \N_0;\P^{\alpha})$ with state space $\R \times \typespace$, such that for all $n \in \N$, $(s_0,\type_0) \in \R \times \typespace$ and $F\colon(\R\times \typespace)^n \to \R_+$ measurable
\begin{equation}\label{eq:mto}
\begin{split}
E_{(s_0,\type_0)}^p\Big[\sum_{|x|=n} e^{-\alpha (\loc(x_n)-\loc(x_0))} \rho^p(\alpha)^{-n} \frac{\eigenfct_{\alpha}(\type(x_n))}{\eigenfct_{\alpha}(\type(x_0))}& F(\loc(x_1),\type(x_1),\ldots,\loc(x_n),\type(x_n))\Big]\\
&=\E_{(s_0,\type_0)}^{\alpha}[F(S_1,\type_1,\ldots,S_n,\type_n)].
\end{split}
\end{equation}
Moreover, $\sigma((S_i,\type_i)\colon i\in [n])=\sigma(S_i\colon i\in [n])$.
\end{lemma}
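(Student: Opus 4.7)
The plan is to carry out Lyons' classical spine / change-of-measure construction \cite{Ly94} for branching random walks, adapted to the multitype percolated $\IBRW$. I would begin by introducing, under $P_{(s_0, \type_0)}^p$ and its natural generation filtration $\F_n := \sigma(\loc(x), \type(x) : |x|\leq n)$, the additive functional
\[
W_n := \sum_{|x|=n} \frac{\eigenfct_{\alpha}(\type(x))}{\eigenfct_{\alpha}(\type_0)}\, e^{-\alpha(\loc(x) - s_0)}\, \rho^p(\alpha)^{-n}, \qquad n \in \N_0,
\]
and verifying that $(W_n)$ is a nonnegative mean-one $(\F_n)$-martingale. This is a direct consequence of the eigenfunction identity $A_\alpha^p \eigenfct_\alpha = \rho^p(\alpha) \eigenfct_\alpha$: conditioning on $\F_n$ and applying the identity to each particle alive at generation~$n$ separately shows that the conditional expectation of every such particle's contribution to $W_{n+1}$ equals its contribution to $W_n$. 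It is here that one needs $\eigenfct_\alpha$ to be positive and bounded (Lemma~\ref{lem:SpecProp}(ii)) together with $\rho^p(\alpha) < \infty$, i.e.\ $\alpha \in \interval$ (Lemma~\ref{lem:SpecProp}(i)).

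Next I would enlarge the probability space by a \emph{spine}, a sequence $(\xi_n)$ with $|\xi_n|=n$ and $\parent(\xi_{n+1}) = \xi_n$. Define $\P^\alpha$ on this enlarged space so that the $\F_n$-marginal has density $W_n$ with respect to $P_{(s_0,\type_0)}^p$, and conditionally on the tree up to generation $n+1$ and on $\xi_n$, the next spine particle $\xi_{n+1}$ is selected among the children $y$ of $\xi_n$ with probability proportional to $\frac{\eigenfct_\alpha(\type(y))}{\rho^p(\alpha) \eigenfct_\alpha(\type(\xi_n))} e^{-\alpha(\loc(y) - \loc(\xi_n))}$. The martingale property of $W_n$ guarantees consistency of these prescriptions, and the eigenfunction equation ensures the spine-selection weights sum to one. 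Setting $(S_n, \type_n) := (\loc(\xi_n), \type(\xi_n))$, the spine becomes a Markov chain on $\R \times \typespace$ whose one-step transition depends only on $\type(\xi_n)$, and unwrapping the construction yields identity~\eqref{eq:mto}: integrating $F$ against the $\xi_n$-marginal of $\P^\alpha$ reproduces the weighted generation-$n$ sum on the left of \eqref{eq:mto}.

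For the final claim, I would argue directly from the definition of types in the $\IBRW$: any non-root particle $y$ is either a right child of its parent (so $\loc(y) > \loc(\parent(y))$ and $\type(y) = \ell$) or a left child (so $\loc(y) < \loc(\parent(y))$ and $\type(y) = \loc(\parent(y)) - \loc(y)$). Applied to the spine, this makes $\type_i$ a deterministic function of $(S_{i-1}, S_i)$ for all $i \geq 1$ (with $S_0 = s_0$ fixed), so $\sigma((S_i,\type_i) : i \in [n]) = \sigma(S_i : i \in [n])$.

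The technical point warranting care is that the right-offspring of a particle in the $\IBRW$ form an \emph{infinite} point set, so that the sum defining $W_n$ has infinitely many summands; the quantitative input that legitimises every step is that the weighted total mass of the children of a type-$\type$ particle equals $A_\alpha^p \eigenfct_\alpha(\type) = \rho^p(\alpha) \eigenfct_\alpha(\type)$, finite exactly when $\alpha \in \interval$. This simultaneously ensures integrability of $W_1$, validates the Radon--Nikodym construction, and turns the spine-selection rule into a genuine probability measure at each step.
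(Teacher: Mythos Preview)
Your proposal is correct and follows essentially the same route as the paper: both carry out Lyons' spine change-of-measure \cite{Ly94}, building $\P^\alpha$ from the eigenfunction-weighted tilt and then selecting the spine proportionally to $e^{-\alpha(\loc(y)-\loc(\xi_n))}\eigenfct_\alpha(\type(y))$, with the $\sigma$-algebra claim reduced to the fact that a particle's type is determined by its displacement from its parent. The only cosmetic difference is that you foreground the additive martingale $W_n$ and obtain the tilted law as a Radon--Nikodym change of measure, whereas the paper specifies the law of the spine-decorated process forward in time (tilted offspring along the spine, original offspring off it) and then reads off the density; these are two standard and equivalent presentations of the same construction.
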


 Note that it is easy to check that the distribution of $((S_n, \tau_n) \, : \, n \in \N_0; \p^\alpha)$ does not depend on the percolation parameter $p$.

\begin{proof}
Given a labelled tree $(\Gamma,L)$, with $L(x)=(\loc(x),\type(x))$, we can distinguish an ancestral line 
$\xi=(\xi_1,\xi_2,\ldots)$ which we call spine. In the space of labelled trees, we denote by $\F_n$ the $\sigma$-field generated by the first $n$ generations, $\F_n=\sigma((x,L(x))\colon |x|\le n)$. 
The analogue in the space of trees with spines is denoted by $\F_n^*$.

For every $(s_0,\type_0)\in \R\times \typespace$, the distribution of the $\IBRW$ started in $(s_0,\type_0)$ can be interpreted as a distribution $P_{(s_0,\type_0)}^{p}$ on the set of labelled trees. 
We extend this measure to the space of labelled trees with spines. Since $(s_0,\type_0)$ and $p$ will remain fixed throughout the proof, we omit it from the notation and write $P=P_{(s_0,\type_0)}^p$ for brevity.

Note that every $\F_n^*$-measurable function $g$ can be written as
\[
g(\Gamma,L,\xi)=\sum_{|x|=n} g_x(\Gamma,L) \mathbbm{1}_{\xi_n=x},
\]
for $\F_n$-measurable functions $g_x$ (see page 24 in \cite{Rob09}). We define $P_n^*$ to be the (non-probability) measure on $\F_n^*$ such that for all nonnegative $\F_n^*$-measurable functions $g$,
\[
\int g(\Gamma,L, \xi) \, dP_n^*= \int \sum_{|x|=n} g_x(\Gamma,L) \,dP|_{\F_n}.
\]
We now construct a new branching random walk under a new probability measure $\p^\alpha$.
The root has again label $L(\emptyset)=(\loc(\emptyset),\type(\emptyset))=(s_0,\type_0)$. A particle $\xi_n$ on the spine in generation $n$ with label $(\loc(\xi_n),\type(\xi_n))$ produces new offspring with distribution
\[
\frac{d\hat{\mathcal{L}}_{\type(\xi_n)}}{d\mathcal{L}_{\type(\xi_n)}}(\mu)=\frac{1}{\rho^p(\alpha)}\int e^{-\alpha (\loc(x)-\loc(\xi_n))}\frac{\eigenfct_{\alpha}(\type(x))}{\eigenfct_{\alpha}(\type(\xi_n))} \, \mu(dx)
\]
for all atomic measures $\mu$ on $\R\times \typespace$. Here $\mathcal{L}_{\sigma}$ denotes the offspring distribution for a particle of type $\sigma \in \typespace$ in the original process. 
The new spine particle $\xi_{n+1}$ in generation $n+1$ is chosen from the offspring  of $\xi_n$ by choosing an offspring $x$ with probability proportional to
\[
e^{-\alpha(\loc(x)-\loc(\xi_n))}\eigenfct_{\alpha}(\type(x)).
\]
Off the spine the new branching random walk behaves exactly as the original one. 
Then
\begin{align*}
\frac{d\P^{\alpha}|_{\F_n^*}}{dP_n^*}(\Gamma,L,\xi)&=\frac{d\P^{\alpha}|_{\F_{n-1}^*}}{dP_{n-1}^*}(\Gamma,L,\xi) \, \rho^p(\alpha)^{-1} \sum_{x\colon \parent(x)=\xi_{n-1}} e^{-\alpha (\loc(x)-\loc(\xi_{n-1}))}\frac{\eigenfct_{\alpha}(\type(x))}{\eigenfct_{\alpha}(\type(\xi_{n-1}))}\\
& \phantom{space more space more}\frac{e^{-\alpha(\loc(\xi_{n})-\loc(\xi_{n-1}))}\eigenfct_{\alpha}(\type(\xi_{n}))}{\sum_{x\colon \parent(x)=\xi_{n-1}} e^{-\alpha(\loc(x)-\loc(\xi_{n-1}))}\eigenfct_{\alpha}(\type(x))}\\
&=\frac{d\P^{\alpha}|_{\F_{n-1}^*}}{dP_{n-1}^*}(\Gamma,L,\xi) \rho^p(\alpha)^{-1} e^{-\alpha(\loc(\xi_{n})-\loc(\xi_{n-1}))}\frac{\eigenfct_{\alpha}(\type(\xi_{n}))}{\eigenfct_{\alpha}(\type(\xi_{n-1}))}\\
&=\prod_{k=1}^n  \Big( \rho^p(\alpha)^{-1} e^{-\alpha(\loc(\xi_{k})-\loc(\xi_{k-1}))}\frac{\eigenfct_{\alpha}(\type(\xi_{k}))}{\eigenfct_{\alpha}(\type(\xi_{k-1}))}\Big)\\
&= \rho^p(\alpha)^{-n} e^{-\alpha(\loc(\xi_n)-\loc(\xi_{0}))}\frac{\eigenfct_{\alpha}(\type(\xi_{n}))}{\eigenfct_{\alpha}(\type(\xi_{0}))}.
\end{align*}
In particular, for all $F \colon(\R \times \typespace)^n \to [0,\infty)$ measurable
\begin{align*}
\E^{\alpha}&\big[F(\loc(\xi_1),\type(\xi_1),\ldots,\loc(\xi_n),\type(\xi_n))\big]\\
&=E_n^{*}\Big[F(\loc(\xi_1),\type(\xi_1),\ldots,\loc(\xi_n),\type(\xi_n)) \rho(\alpha)^{-n} e^{-\alpha (\loc(\xi_n)-\loc(\xi_0))}\frac{\eigenfct_{\alpha}(\type(\xi_n))}{\eigenfct_{\alpha}(\type(\xi_0))}\Big]\\
&=E\Big[\sum_{|x| =n} F(\loc(x_1),\type(x_1),\ldots,\loc(x_n),\type(x_n)) \rho(\alpha)^{-n}e^{-\alpha (\loc(x_n)-\loc(x_0))}\frac{\eigenfct_{\alpha}(\type(x_n))}{\eigenfct_{\alpha}(\type(x_0))} \Big].
\end{align*}
We define $(S_n,\type_n):=(\loc(\xi_n),\type(\xi_n))$ and $\P_{(s_0,\type_0)}^{\alpha}=\P^{\alpha}$. The Markov property follows from the definition of the process. 
Since the offspring distribution of the spine is absolutely continuous with respect to the offspring distribution of the original process and the type of the original process is a function of the locations of its ancestors and the particle itself, the proof is complete.
\end{proof}

Next we also need a higher-dimensional version of the many-to-one lemma that includes the number of offspring of the particles on the spine.
For any $x = (x_0, \ldots, x_n)$ in the branching process, define the point measure on $\R$
\[ \nu_{x} = \sum_{y \, : \, |y|=n+1, y_n =  x} \delta_{S(y) - S(x)}  , \]
with $\delta_s$  a Dirac mass in $s$, which describes the positions of offspring of $x$ relative to the position of $x$.
Denote by $M_p(\R)$ the point measures on $\R$.

\begin{lemma}\label{le:mto_2} For all $\alpha \in \mathcal{I}$, $(s_0, \tau_0) \in \R\times \typespace$ there exists a probability measure $
\p^\alpha_{(s_0,\tau_0)} = \p^{\alpha,p}_{(s_0,\tau_0)}$ and a Markov process $((S_i, \tau_i, \nu_{i-1})_{i=0, \ldots, n}, : \, n \in \N_0; \p_{(s_0,\tau_0)}^\alpha)$ started in $(s_0, \tau_0, \delta_{s_0})$ with state space $\R \times \typespace\times M_p(\R)$ such that for all measurable $F$
\[ 
\begin{split}E_{(s_0,\type_0)}^p\Big[\sum_{|x|=n} e^{-\alpha (\loc(x_n)-\loc(x_0))} \rho(\alpha)^{-n} \frac{\eigenfct_{\alpha}(\tau(x_{n}))}{\eigenfct_{\alpha}(\type(x_0))}& F(\loc(x_1),\type(x_1),\nu_{x_0}, \ldots, 
\loc(x_n), \type(x_n), \nu_{x_{n-1}})\Big]\\
&=\E_{(s_0,\type_0)}^{\alpha}[F(S_1,\tau_1,\nu_0,\ldots,S_n,\tau_n, \nu_{n-1})].
\end{split}
\]
Moreover, for any measurable $A \subset \R$ and any measurable $F$,
\[\begin{aligned}  \E[ F(S_n-S_{n-1},&  \nu_{n-1}(A)) \, |\,  \sigma(S_{i}, \tau_{i},\nu_{i-1}, i\leq n-1) ] \\
& = \E_{(0,\tau)}\Big[ \sum_{|x|=1} e^{-\alpha S(x)} \rho(\alpha)^{-1} \frac{v_\alpha(\tau(x))}{v_\alpha(\tau)} F(S(x), \{ y \, | \, |y|=1, S(y) \in A\}) \Big]\Big|_{\tau=\tau_{n-1}} . \end{aligned} \]
\end{lemma}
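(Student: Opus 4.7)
The plan is to extend the spine construction in the proof of Lemma~\ref{lem:mto} so that, in addition to tracking the label $(S(\xi_i), \tau(\xi_i))$ of each spine particle, we also record the whole point measure $\nu_{\xi_{i-1}}$ of the positions of the offspring of the preceding spine particle. The point is that the Radon--Nikodym derivative computed in the proof of Lemma~\ref{lem:mto} only depends on the labels along the spine through the telescoping product $\prod_{k=1}^n \rho^p(\alpha)^{-1} e^{-\alpha(S(\xi_k)-S(\xi_{k-1}))} v_\alpha(\tau(\xi_k))/v_\alpha(\tau(\xi_{k-1}))$, so it is unaffected by enlarging the $\sigma$-field $\F_n^*$ to $\widetilde{\F}_n^* := \F_n^* \vee \sigma(\nu_{\xi_i}: 0 \leq i \leq n-1)$.

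Concretely, I would repeat the setup of Lemma~\ref{lem:mto}: define the (non-probability) measure $P_n^*$ on $\widetilde{\F}_n^*$ by $\int g \, dP_n^* = \int \sum_{|x|=n} g_x \, dP|_{\widetilde{\F}_n}$, where any $\widetilde{\F}_n^*$-measurable function decomposes as $g = \sum_{|x|=n} g_x \1_{\xi_n = x}$ with $g_x$ measurable with respect to the enlarged filtration on labelled trees. Then define $\P^\alpha$ exactly as before: the offspring of a spine particle of type $\tau(\xi_n)$ has the tilted law $\frac{d\hat{\mathcal{L}}_{\tau(\xi_n)}}{d\mathcal{L}_{\tau(\xi_n)}}(\mu) = \rho^p(\alpha)^{-1} \int e^{-\alpha(S(x)-S(\xi_n))} v_\alpha(\tau(x))/v_\alpha(\tau(\xi_n)) \, \mu(dx)$, the next spine particle $\xi_{n+1}$ is chosen from the offspring of $\xi_n$ with probability proportional to $e^{-\alpha(S(\xi_{n+1})-S(\xi_n))} v_\alpha(\tau(\xi_{n+1}))$, and off the spine the process is unchanged. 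The same computation as in the proof of Lemma~\ref{lem:mto} yields
\[
\frac{d\P^\alpha|_{\widetilde{\F}_n^*}}{dP_n^*}(\Gamma,L,\xi) = \rho^p(\alpha)^{-n} e^{-\alpha(S(\xi_n)-S(\xi_0))} \frac{v_\alpha(\tau(\xi_n))}{v_\alpha(\tau(\xi_0))},
\]
and setting $F$ to be a function of $(S(\xi_i), \tau(\xi_i), \nu_{\xi_{i-1}})_{i=1,\ldots,n}$ gives the first claim.

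For the conditional expectation formula, the Markov property of $((S_i,\tau_i,\nu_{i-1}))$ under $\P^\alpha$ follows from the fact that, under the tilted dynamics, the triple $(S_n, \tau_n, \nu_{n-1})$ is determined by the offspring point measure of the spine particle $\xi_{n-1}$ together with the choice of $\xi_n$ among those offspring, both of which only depend on $(S_{n-1}, \tau_{n-1})$. To identify the transition kernel explicitly, one integrates out: for any test function $F$, the conditional expectation of $F(S_n - S_{n-1}, \nu_{n-1}(A))$ given the past is
\[
\int \sum_{x \in \mathrm{supp}(\mu)} \frac{e^{-\alpha(S(x)-S_{n-1})} v_\alpha(\tau(x))}{\sum_{y\in\mathrm{supp}(\mu)} e^{-\alpha(S(y)-S_{n-1})} v_\alpha(\tau(y))} F(S(x)-S_{n-1}, \mu(A)) \, \hat{\mathcal{L}}_{\tau_{n-1}}(d\mu),
\]
and substituting the Radon--Nikodym derivative of $\hat{\mathcal{L}}_{\tau_{n-1}}$ with respect to $\mathcal{L}_{\tau_{n-1}}$ collapses the ratio and produces exactly the expression in the statement, namely $\rho(\alpha)^{-1} E_{(0,\tau_{n-1})}[\sum_{|x|=1} e^{-\alpha S(x)} v_\alpha(\tau(x))/v_\alpha(\tau_{n-1}) F(S(x),\{|y|=1 : S(y)\in A\})]$.

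The main obstacle is purely bookkeeping: one must check that the enlarged $\sigma$-field $\widetilde{\F}_n^*$ is still generated by a decomposition of the form $g = \sum_{|x|=n} g_x \1_{\xi_n=x}$ so that $P_n^*$ is well-defined, and one must verify carefully that the renormalising denominator in the spine choice cancels against the tilted offspring density when one passes from the joint law of (offspring measure, spine choice) to its marginal conditional on the next spine label. Neither step poses a conceptual difficulty; the argument is a direct reprise of Lyons' change of measure, so no new probabilistic input beyond Lemma~\ref{lem:mto} is required.
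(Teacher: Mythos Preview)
Your proposal is correct and follows essentially the same approach as the paper: both reuse the spine measure $\P^\alpha$ from Lemma~\ref{lem:mto}, set $(S_n,\tau_n,\nu_{n-1}):=(S(\xi_n),\tau(\xi_n),\nu_{\xi_{n-1}})$, and invoke the already-computed Radon--Nikodym density together with the Markov property of the spine process. One small simplification: your enlargement $\widetilde{\F}_n^* = \F_n^* \vee \sigma(\nu_{\xi_i}: i\le n-1)$ is vacuous, since $\nu_{\xi_i}$ records offspring in generation $i+1\le n$ and is therefore already $\F_n^*$-measurable, so no new bookkeeping is actually required.
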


 Note that unlike in Lemma~\ref{lem:mto} the distribution of $(S_k,\tau_k, \nu_{k-1})_{k \in \N_)}$ does depend on $p$, since we are considering a non-linear function of the point process describing the position of the offspring.

\begin{proof} Consider the IBRW with spine $\xi$ under the measure $\p^\alpha$ as constructed in the proof of Lemma~\ref{lem:mto}. Then, define $(S_n, \tau_n, \nu_{n-1}) := ( S(\xi_n), \tau(\xi_n), \nu_{\xi_{n-1}})$
and the first statement follows since we know the explicit Radon-Nikodym density of $\p^\alpha$ with respect to $P^*_n$. 
The second statement is a consequence of the Markov property of the IBRW with spine combined with a suitable choice of test function $F$.
\end{proof}

\begin{lemma}[{\bf{Moments}}]\label{moments_lem}
Let $\alpha \in \interval$ and $((\loc_n,\type_n)\colon n \in \N_0)$ be the Markov process from Lemma~\ref{lem:mto}. For all $\type_0 \in \typespace$
\[
\E^{\alpha}_{(0,\type_0)}[S_1]=-\frac{\rho'(\alpha)}{\rho(\alpha)} \qquad \E^{\alpha}_{(0,\type_0)}\big[S_1^2\big]=\frac{\rho''(\alpha)}{\rho(\alpha)}.
\]
\end{lemma}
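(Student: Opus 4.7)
The plan is to combine the many-to-one identity (Lemma~\ref{lem:mto}) with the spectral differentiation formula \eqref{diff_eq} from Lemma~\ref{lem:SpecProp}(iii). Both are essentially formulas for a single-generation average of a functional of $\loc(x)$ against the tilted measure with density proportional to $\eigenfct_{\alpha}(\type(x))\,e^{-\alpha \loc(x)}\rho(\alpha)^{-1}$; the lemma is just the observation that the latter average appears as a moment of $\loc_1$ under $\E^{\alpha}_{(0,\type_0)}$.

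\textbf{Step 1.} Apply Lemma~\ref{lem:mto} with $n=1$, $p=1$, and the test functions $F(s,\type)=s$ and $F(s,\type)=s^{2}$. This gives, for each $i\in\{1,2\}$,
\[
\E^{\alpha}_{(0,\type_0)}\bigl[\loc_{1}^{i}\bigr]
\;=\;
E_{(0,\type_0)}\!\Big[\sum_{|x|=1} \frac{\eigenfct_{\alpha}(\type(x))}{\eigenfct_{\alpha}(\type_0)}\,e^{-\alpha \loc(x)}\,\rho(\alpha)^{-1}\,\loc(x)^{i}\Big].
\]

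\textbf{Step 2.} Compare with \eqref{diff_eq}, which reads
\[
\rho^{(i)}(\alpha)=E_{(0,\type_0)}\Big[\sum_{|x|=1}\frac{\eigenfct_{\alpha}(\type(x))}{\eigenfct_{\alpha}(\type_0)}\,e^{-\alpha \loc(x)}\,(-\loc(x))^{i}\Big],\qquad i\in\{1,2\}.
\]
For $i=1$ this yields $\E^{\alpha}_{(0,\type_0)}[\loc_1]=-\rho'(\alpha)/\rho(\alpha)$, and for $i=2$ (since $(-\loc(x))^{2}=\loc(x)^{2}$) it yields $\E^{\alpha}_{(0,\type_0)}[\loc_1^{2}]=\rho''(\alpha)/\rho(\alpha)$, as claimed. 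The fact that the right-hand side of \eqref{diff_eq} is independent of $\type_0$ is consistent with the assertion that the two moments above depend only on $\alpha$.

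There is essentially no obstacle: both Lemma~\ref{lem:mto} and Lemma~\ref{lem:SpecProp}(iii) are already established, and the proof is a one-line matching of two explicit expressions. The only point worth verifying is the implicit integrability of $\loc(x)$ and $\loc(x)^{2}$ under the size-biased measure, but this follows from $\rho'(\alpha),\rho''(\alpha)<\infty$ throughout $\interval$, which is part of Lemma~\ref{lem:SpecProp}(iii) (together with the strict positivity of $\eigenfct_\alpha$, ensuring the quotient $\eigenfct_{\alpha}(\type(x))/\eigenfct_{\alpha}(\type_0)$ is bounded above and below on $\typespace$ by Lemma~\ref{lem:SpecProp}(v)).
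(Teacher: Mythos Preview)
Your proof is correct and is exactly the approach taken in the paper: the authors' proof consists of the single line ``Follows immediately from \eqref{eq:mto} and \eqref{diff_eq},'' which is precisely the matching of the many-to-one identity at $n=1$ with the spectral differentiation formula that you have spelled out. The only minor technicality is that Lemma~\ref{lem:mto} is stated for nonnegative $F$, so for $i=1$ one should split $\loc_1$ into positive and negative parts, but you already note that the required integrability is guaranteed by $\rho'(\alpha),\rho''(\alpha)<\infty$ on $\interval$.
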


\begin{proof}
Follows immediately from \eqref{eq:mto} and \eqref{diff_eq}.
\end{proof}

\subsection{Asymptotic moment estimates}\label{ssn:asymp_moments}

In the proofs of Theorems~\ref{thm:percol} and~\ref{thm:vary_f} we will need estimates for moments of 
the Markov chains defined in  Lemmas~\ref{lem:mto} and~\ref{le:mto_2}.
Suppose that $f_{t_n}$ is a sequence of decreasing attachment rules such that $f_{t_n} \downarrow f$ pointwise for an attachment rule $f$. Further, suppose that $\gamma_n := \lim_{k \ra \infty} f_{t_n}(k)/k < \frac 12$ for all $n \in\N$,
so that by Proposition~\ref{prop:conv} also $\gamma = \lim_{k \ra \infty} \frac{f(k)}{k} < \frac 12$.  Let $\alpha^*_n$, resp.\ $\alpha^*$ be the unique minimizer of the spectral radius $\rho_n$, resp.\ $\rho$, of the operator corresponding to the attachment rule $f_n$, resp.\ $f$.
In the setting of Theorem~\ref{thm:percol} take $f_{t_n} = f$ and write $\rho_n = \rho^{p_n} = p_n \rho(\cdot)$.

The Markov chain from Lemma~\ref{lem:mto} corresponding to attachment rule $f_{t_n}$ 
is denoted by $((S_i^\ssup{n},\type_i^\ssup{n})\colon i \in \N;\P=\P_{(s_0,\ell)}^{\alpha_n^*}$),

\begin{lemma}\label{le:exp_moments} 
There exists $\eta > 0$ such that
	\[ \sup_{n \in \N, \tau \in \typespace} \E_{0,\tau}^{\alpha_n^*} \Big[ e^{\eta |S_1^\ssup{n}|}\Big] < \infty. \]
\end{lemma}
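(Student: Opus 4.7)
The plan is to apply the many-to-one formula to convert the tilted expectation into a moment of the original IBRW, and then combine the spectral estimates from Lemma~\ref{lem:SpecProp} and Proposition~\ref{prop:conv} with a monotonicity argument in the attachment rule.

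Applying Lemma~\ref{lem:mto} to the test function $F(s,\type)=e^{\eta|s|}$, and noting that the percolation parameter cancels (since $\rho^p(\alpha)=p\rho(\alpha)$ while the offspring intensity is also scaled by $p$), we obtain
\[
\E^{\alpha_n^*}_{(0,\tau_0)}\bigl[e^{\eta|S_1^\ssup{n}|}\bigr]=\frac{1}{\rho_n(\alpha_n^*)\,v_{n,\alpha_n^*}(\tau_0)}\,E_{(0,\tau_0)}\Bigl[\sum_{|x|=1}v_{n,\alpha_n^*}(\type(x))\,e^{-\alpha_n^*\loc(x)+\eta|\loc(x)|}\Bigr],
\]
where $E$ refers to the unpercolated IBRW with attachment rule $f_{t_n}$. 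Using Lemma~\ref{lem:SpecProp}(v) to bound the eigenfunction ratio by $v_{n,\alpha_n^*}(0)/v_{n,\alpha_n^*}(\ell)$, Proposition~\ref{prop:conv}(iii) to make this uniform in $n$, and Proposition~\ref{prop:conv}(ii) to control $\rho_n(\alpha_n^*)^{-1}$, the problem reduces, after splitting according to the sign of $\loc(x)$ via $|\loc(x)|=\loc(x)\mathbf{1}_{\loc(x)\ge 0}-\loc(x)\mathbf{1}_{\loc(x)<0}$, to a uniform bound (over $n$ and $\tau_0$) on
\[
A_{\alpha_n^*-\eta,n}\mathbf{1}(\tau_0)+A_{\alpha_n^*+\eta,n}\mathbf{1}(\tau_0),
\]
where $A_{\alpha,n}$ denotes the score operator associated with $f_{t_n}$.

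Since $\alpha_n^*\to\alpha^*\in\interval$ by Proposition~\ref{prop:conv}(ii), one can choose $\eta>0$ so small that the set $\{\alpha_n^*\pm\eta\colon n\in\N\}$ lies in a compact subinterval $K\subset\interval$. We then exploit that $f_{t_n}$ decreases pointwise to $f$: coupling the underlying pure-jump processes $(Z_t)$ monotonically and using that attachment rules are non-decreasing, one obtains $A_{\alpha,n}\mathbf{1}(\tau_0)\le A_{\alpha,N}\mathbf{1}(\tau_0)$ for all $n\ge N$. Picking $N$ large enough that $K\subset(\gamma_N,1-\gamma_N)$, the matrix domination $\overline A_{\alpha,N}$ from the proof of Lemma~\ref{lem:SpecProp} yields
\[
A_{\alpha,N}\mathbf{1}(\tau_0)\le\mUnbiased_N(\alpha)+\mBiased_N(\alpha)+\mUnbiased_N(1-\alpha),
\]
which is continuous in $\alpha$ on $(\gamma_N,1-\gamma_N)$ and therefore bounded on the compact set $K$; the finitely many indices $n<N$ are trivially handled.

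The main obstacle is achieving uniformity simultaneously in $n\in\N$ and $\tau_0\in\typespace$: the $\tau_0$-dependence is absorbed into the uniformly controlled multiplicative constant supplied by Lemma~\ref{lem:SpecProp}(v) and Proposition~\ref{prop:conv}(iii), while the $n$-dependence is neutralised by combining the convergence $\alpha_n^*\to\alpha^*$ with the monotonicity of the score operator in the attachment rule. In the setting of Theorem~\ref{thm:percol}, where $f_{t_n}\equiv f$ and hence $\alpha_n^*=\alpha^*$ for all $n$, the monotonicity step is vacuous and only the continuity of $\mUnbiased$ and $\mBiased$ at $\alpha^*\pm\eta$ is needed.
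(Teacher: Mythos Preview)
Your proof is correct and follows essentially the same skeleton as the paper's---many-to-one, eigenfunction ratio bound via Lemma~\ref{lem:SpecProp}(v) and Proposition~\ref{prop:conv}(iii), split by sign of $\loc(x)$, then monotonicity in the attachment rule to pass to a fixed index---but it packages the final estimate more efficiently. The paper re-derives the exponential growth bounds on $E[f(Z_t)]$ and $E[f(\hat Z_t)]$ by hand via Yule-process comparison and the Poisson structure of the left offspring; you instead observe that the quantity to be bounded is exactly $A_{\alpha_n^*\pm\eta,n}\mathbf{1}(\tau_0)$ and invoke the matrix domination $\overline A_{\alpha}$ already established in the proof of Lemma~\ref{lem:SpecProp}, which encodes precisely those estimates through $\mUnbiased$ and $\mBiased$. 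This is a cleaner route that avoids repeating work.

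One point deserves a sentence more than ``trivially handled'': for the finitely many indices $n<N$ you need $\alpha_n^*\pm\eta\in(\gamma_n,1-\gamma_n)$, not merely $\in K\subset\interval$. Since each $\alpha_n^*$ lies in the open interval $(\gamma_n,1-\gamma_n)$ and there are only finitely many such $n$, you may shrink $\eta$ further to achieve this; shrinking $\eta$ only shrinks $K$, so the previously chosen $N$ still satisfies $K\subset(\gamma_N,1-\gamma_N)$ and no circularity arises. With this adjustment the argument is complete.
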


\begin{proof} We first consider the case $S_1^\ssup{n} \geq 0$.
Note that 
\[\begin{aligned}  \E^{\alpha_n} \Big[ e^{\eta S_1^\ssup{n}}\Big] 
& \leq 1  + E_{(0,\tau)}\Big[ \sum_{|x|=1} e^{(\eta -  \alpha_n^\star) S^\ssup{n}(x)} \frac{v_{\alpha_n}(\tau(x))}{v_{\alpha_n}(\tau)} \rho_n(\alpha_n)^{-1} \1_{\{ S^\ssup{n}(x) \geq 0 \} }\Big] \\
& \leq 1  + \sup_{k \in \N} \frac{v_{\alpha_k}(0)}{v_{\alpha_k}(\ell)} \
E_{(0,\tau)}\Big[ \sum_{|x|=1} e^{(\eta -  \alpha_n^\star) S^\ssup{n}(x)}   \1_{\{ S^\ssup{n}(x) \geq 0 \} }\Big] 
\end{aligned} 
\]
where we used that $\rho_n(\alpha_n)>1$, the monotonicity in types for $v_{\alpha_n}$ by Lemma~\ref{lem:SpecProp} and the uniform boundedness of the quotient
$\frac{v_{\alpha_n}(0)}{v_{\alpha_n}(\ell)}$ in $n$ by Proposition~\ref{prop:conv}.
Hence, it suffices to show that for the right choice of $\eta$ the 
expectation on the right hand side remains bounded in $n$.

Now, by Lemma~\ref{lem:SpecProp}, we have that $\eta := \frac14 (\alpha^*- \gamma) > 0$.
By Proposition~\ref{prop:conv}, we have that $\gamma_n \downarrow \gamma$ and $\alpha_n^*\ra \alpha^*$. Then, we can choose $n_0$ sufficiently large such that for all $n \geq n_0$ we have
$\gamma_{n} < \gamma + \eta$ and $\alpha_n^\star \geq \alpha^* - \eta$.

Furthermore, we denote by $\tilde Z^\ssup{n_0}$ the jump process that jumps from $k$ to $k+1$ with rate $f_{t_{n_0}}(k)$ started in $1$. Then 
by comparison with a Yule process with constant branching rate, we can find a constant $C(n_0) > 0$ such that
\begin{equation}\label{eq:2610-1} \sup_{ t \geq 0} E[\tilde Z^\ssup{n_0}_t] e^{-(\gamma + \eta)t} \leq C(n_0). \end{equation}
Finally, we obtain from the construction of IBRW and using~\eqref{eq:2610-1} that for $n \geq n_0$ 
\[ \begin{aligned} E_{(0,\tau)}\Big[  \sum_{|x|=1} & e^{(\eta -  \alpha_n^\star) S^\ssup{n}(x)}   \1_{\{ S^\ssup{n}(x) \geq 0 \} }\Big]
 \leq \sum_{k \in \N_0} e^{- (\alpha^* - 2 \eta) k} \, E_{(0,\tau)} \Big[ \sum_{|x|=1} 
\1_{\{ S(x) \in [k,k+1]\}} \Big] \\
&  \leq \sum_{k \in \N_0}  e^{- (\alpha^* - 2 \eta) k} E [ \tilde Z_{k+1}^\ssup{n_0} ] 
\leq C(n_0) \sum_{k \in \N_0} e^{- (\alpha^*  - 3\eta - \gamma) k  + \gamma+ \eta},
\end{aligned}
\]
which is finite by choice of $\eta$.

The fact that the supremum over all $n$ is finite follows from  the same argument  if we redefine  $\eta$ as $\frac 14 \min\{\alpha^* - \gamma, 
\alpha^*_k - \gamma_k , k \leq n_0 \}$.

For the case that $S_1^\ssup{n} \leq 0$, it suffices to prove in a second step, that there exists $\eta > 0$ such that 
\[ E_{(0,\tau)}\Big[ \sum_{|x| =1} e^{-(\alpha_n^* +\eta) S^\ssup{n}(x) } \1_{\{S^\ssup{n}(x) \leq 0\}} \Big] < \infty . \]
Since the children to the left of a particle in the IBRW form a Poisson process and their distribution is not depending on the type of their ancestor we have by construction 
\[ E_{(0,\tau)}\Big[ \sum_{|x| =1} e^{-(\alpha_n^* +\eta) S^\ssup{n}(x) } \1_{\{S^\ssup{n}(x) \leq 0\}} \Big]
= \int_0^\infty e^{(\alpha_n^* + \eta) t} e^{-t} E[ f_{t_n} (X^\ssup{f_{t_n}}_t) ] \, dt , \]
where $X^\ssup{f}$ is the pure birth process with jump rates given by $f$.

Let $\eps_n = \frac 18 ( 1 - \gamma_n - \alpha^*_n)$ and let 
$\eps = \frac 14 ( 1 - \gamma - \alpha^*)$, where $\gamma = \inf_{k \geq 1} \frac{f(k)}{k}$. 
By Lemma~\ref{lem:SpecProp}, $\eps_n, \eps > 0$.

Using that $f_{t_k} (k) \leq k+1$ and a comparison to a Yule process, we have that there exists $C_n > 0$ such that
\[E[ f_{t_n} (X_t^\ssup{f_{t_n}}) ] \leq  E[ X_t^\ssup{f_{t_n}}] +1\leq C_n \, e^{(\gamma_n + \eps_n) t} \quad \mbox{for all } t \geq 0 . \]
By Proposition~\ref{prop:conv}, we can find $n_0$ such that for all $n \geq n_0$, 
\[ |\alpha_n - \alpha^*| < \eps, \quad \mbox{and} \quad \gamma \leq \gamma_n \leq \gamma + \eps. \]
Define $\eta := \min \{ \frac{3}{8} \eps ,  \eps_n, n \leq n_0 \}$.
Then, we have for $n \leq n_0$,
\[\int_0^\infty e^{(\alpha_n^* + \eta) t} e^{-t} E[ f_{t_n} (X^\ssup{f_{t_n}}) ] \, dt 
\leq C_n \int_0^\infty e^{(\alpha_n^* + \eta - 1 + \gamma_n + \eps_n ) t} \, dt 
\leq C_n \int_0^\infty e^{- 6 \eta t} \, dt < \infty. \]
Furthermore, for $n \geq n_0$, we can use the monotonicity of $f_{t_n}$ to deduce that 
\[\begin{aligned}  \int_0^\infty e^{(\alpha_n^* + \eta) t}  e^{-t} E[ f_{t_n} (X^\ssup{f_{t_n}}) ] \, dt
 &\leq \int_0^\infty e^{(\alpha_n^* + \eta) t} e^{-t} E[ f_{t_{n_0}} (X^\ssup{f_{t_{n_0}}}) ] \, dt \\ 
& \leq C_{n_0}  \int_0^\infty e^{(\alpha_n^* + \eta - 1 + \gamma_{n_0} + \eps_{n_0} ) t}  \, dt\\
& \leq C_{n_0}  \int_0^\infty e^{(\alpha^*   + \eta - 1 + \gamma + 2\eps +  \frac 18 ( 1 - \gamma - \alpha^* + \eps) ) t}  \, dt\\
& \leq C_{n_0}  \int_0^\infty e^{- \eps t} \, dt, 
\end{aligned} \]
which completes the second step and thus the proof of the lemma.
\end{proof}

\begin{lemma}\label{le:bound_children} 
Let $(S_k^\ssup{n}, \tau^\ssup{n}_k, \nu_{k-1}^\ssup{n})$ be the Markov chain defined in Lemma~\ref{le:mto_2}
either for attachment rule $f_{t_n}$ and percolation parameter $1$ or for fixed attachment rule $f$ (with $\gamma < \frac 12$) and percolation parameter $p_n$, where $p_n \downarrow \rho(\alpha^*)^{-1}$.
For any sequence $(M_n)_{n \in \N}$ such that $M_n \ra \infty$, there exist constants $C > 0$ and $\hat \gamma > 0$ such that, for all~$n$,
\[ \sup_{\tau \in \typespace} \E_{(0,\tau)}^{\alpha_n^*}[ \nu^\ssup{n}_0((-\infty, M_n)) \1_{\{S_1^\ssup{n} \leq M_n\}} ] \leq C e^{\hat \gamma M_n}. \]
\end{lemma}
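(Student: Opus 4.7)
The plan is to apply Lemma~\ref{le:mto_2} with $n=1$ and test function $F(s,\tau',\mu) = \mu((-\infty, M_n))\mathbbm{1}_{\{s \leq M_n\}}$. Since $\nu_{x_0}((-\infty, M_n)) = \#\{|y|=1 : S(y) \leq M_n\} =: N$ does not depend on the distinguished particle $x$, it factors out of the sum in the many-to-one formula, giving
\[
\E^{\alpha_n^*}_{(0,\tau)}\!\bigl[\nu_0((-\infty, M_n)) \mathbbm{1}_{\{S_1 \leq M_n\}}\bigr] = \rho_n(\alpha_n^*)^{-1} E_{(0,\tau)}\!\bigl[ N \cdot K \bigr],
\]
with $K := \sum_{|x|=1,\, S(x) \leq M_n} e^{-\alpha_n^* S(x)} v_{\alpha_n^*}(\tau(x))/v_{\alpha_n^*}(\tau)$ and $E_{(0,\tau)}$ the original $\IBRW$-expectation (the percolation case is handled at the end). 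By Lemma~\ref{lem:SpecProp}(v) and Proposition~\ref{prop:conv}(iii), the eigenfunction ratio is bounded by $v_{\alpha_n^*}(0)/v_{\alpha_n^*}(\ell) \leq C_0$ uniformly in $n$ and $\tau$, and $\rho_n(\alpha_n^*) \geq 1$, so it suffices to bound $E_{(0,\tau)}[N \cdot K_0]$ with $K_0 := \sum_{|x|=1,\, S(x) \leq M_n} e^{-\alpha_n^* S(x)}$.

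Under $E_{(0,\tau)}$ the first-generation offspring decomposes into independent left offspring (a Poisson point process with intensity $e^t E[f_{t_n}(Z_{-t})]\,dt$ on $(-\infty,0]$) and right offspring (with positions the jump times of a process of type $Z^{(\tau)}$ with jump rate $f_{t_n}$). Writing $N = N^L + N^R$ and $K_0 = K_0^L + K_0^R$ with the obvious meaning, all left offspring satisfy $S(x) \leq 0 \leq M_n$ and hence contribute fully to $N^L, K_0^L$, while $N^R$ counts right-offspring jumps in $[0, M_n]$ and $K_0^R \leq N^R$ since $e^{-\alpha_n^* t} \leq 1$ for $t \geq 0$. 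Expanding and using independence between left and right subtrees,
\[
E[N K_0] = E[N^L K_0^L] + E[N^L]\,E[K_0^R] + E[N^R]\,E[K_0^L] + E[N^R K_0^R].
\]
For the first term, Campbell's formula for Poisson processes gives $E[N^L K_0^L] = E[K_0^L] + E[N^L]\,E[K_0^L]$, where each factor reduces to integrals of the form $\int_0^\infty e^{(\beta + \gamma_n + \eps - 1)s}\,ds$ with $\beta \in \{0, \alpha_n^*\}$; these are uniformly finite for small $\eps > 0$ since $\alpha_n^* \in (\gamma_n, 1-\gamma_n)$ and are handled exactly as in Lemma~\ref{le:exp_moments}. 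The cross terms $E[N^L]\,E[K_0^R]$ is bounded, while $E[N^R]\,E[K_0^L] \leq C\,e^{(\gamma_n + \eps) M_n}$ by comparison of $Z^{(\tau)}$ to a Yule process with rate $\gamma_n + \eps$. The dominant contribution is $E[N^R K_0^R] \leq E[(N^R)^2] \leq C\,e^{2(\gamma_n + \eps) M_n}$ via the same Yule second-moment bound, which fixes $\hat\gamma = 2(\gamma + \eps)$ for any sufficiently small $\eps > 0$.

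The main technical obstacle is uniformity of the constants in $n$ (and in the root type~$\tau$). For uniformity in $n$, I would follow the strategy of Lemma~\ref{le:exp_moments}: since $\gamma_n \downarrow \gamma < \tfrac12$ and $\alpha_n^* \to \alpha^* \in (\gamma, 1-\gamma)$ by Proposition~\ref{prop:conv}, one can fix $\eps>0$ and $n_0$ so that for every $n \geq n_0$ the exponents appearing in the integrals have a uniform sign, then use the monotonicity $f_{t_n} \leq f_{t_{n_0}}$ to dominate $Z^{(n)}_t$ by a fixed process whose moment generating function is controlled by a Yule process with rate $\gamma_{n_0} + \eps_0$. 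Uniformity in $\tau$ is automatic on the left (the Poisson intensity is type-independent) and follows on the right from the monotonicity in type established in~\cite{DerMoe13}, which dominates $Z^{(\tau)}$ by the fixed process $\hat Z + 1$. Finally, in the second setting of the lemma (fixed $f$, $p_n \downarrow \rho(\alpha^*)^{-1}$), Bernoulli $p_n$-thinning of the offspring point process decreases $N$ and $K_0$ pointwise, so the untilted moment bounds derived above apply verbatim to $E^{p_n}_{(0,\tau)}[N K_0]$.
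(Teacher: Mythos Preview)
Your proposal is correct and follows essentially the same route as the paper: apply the extended many-to-one formula to rewrite the expectation under the original $\IBRW$, remove the eigenfunction ratio via Lemma~\ref{lem:SpecProp}(v) and Proposition~\ref{prop:conv}(iii), split into left/right offspring, use the Poisson structure (Campbell/Mecke) for the left part and a Yule-process second-moment comparison for the right part, and obtain uniformity in $n$ by dominating $f_{t_n}$ by a fixed $f_{t_{n_0}}$. The only cosmetic difference is that you expand $E[NK_0]$ into four terms $LL+LR+RL+RR$, whereas the paper first bounds the $e^{-\alpha_n^* S(x)}$ factor by $1$ on the right to collapse into three terms; your observation that $E[K_0^R]$ is in fact uniformly bounded (not merely $\le E[N^R]$) is a slightly sharper bookkeeping of the $LR$ cross term but does not change the final exponent.
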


\begin{proof} To unify notation define $p_n=1$ in the varying $f$ case and  $f_{t_n} = f$ in the percolation case.
By the extended many-to-one formula, Lemma~\ref{le:mto_2}, we have that
\[\begin{aligned}  \E_{(0,\tau)}^{\alpha_n^*}\big[  & \nu^\ssup{n}_0((-\infty, M_n)) \1_{\{S_1^\ssup{n} \leq M_n\}} \big] \\
&  =
E_{(0,\tau)}^{\, p_n} \bigg[ \Big(  \sum_{|x| =1} e^{-\alpha_n^* S^\ssup{n}(x) } \rho_n(\alpha_n^*)^{-1} \frac{v_{\alpha_n^*}(\tau(x))}{v_{\alpha_n^*}(\tau) } (\1_{\{ S^\ssup{n}(x) \leq M_n\}} \Big)\Big(  \sum_{|x| =1} \1_{\{ S^\ssup{n}(x) \leq M_n\}} \Big) \bigg] .
\end{aligned} \] 
We can use that $\rho_n(\alpha_n^*) > 1$, the monotonicity in types and in $p_n$, Lemma~\ref{lem:SpecProp}, 
and that $C :=\sup_{n\in \N} \frac{v_{\alpha_n}(0)}{v_{\alpha_n}(\ell) } < \infty$ by Proposition~\ref{prop:conv},
in order to bound the above by
\begin{equation}\label{eq:2111-1} \begin{aligned}  
E_{(0,\tau)}^1 \bigg[ & \Big(  \sum_{|x| =1} e^{-\alpha_n^* S^\ssup{n}(x) }  \frac{v_{\alpha_n}(0)}{v_{\alpha_n}(\ell) } (\1_{\{ S^\ssup{n}(x) \leq M_n\}} \Big)\Big(  \sum_{|x| =1} \1_{\{ S^\ssup{n}(x) \leq M_n\}} \Big) \bigg] \\
& \leq  C \, E_{(0,\tau)}^1 \Big[ \Big(  \sum_{|x| =1} \1_{\{ 0 \leq S^\ssup{n}(x) \leq M_n\}}  \Big)^2 \Big] \\
& \qquad + 
2 C \, E_{(0,\tau)}^1 \Big[ \Big(  \sum_{|x| =1}   e^{-\alpha_n^* S^\ssup{n}(x) }\1_{\{ S^\ssup{n}(x) \leq 0\}}\Big) \Big]
E_{(0,\tau)}^1 \Big[ \Big(  \sum_{|x| =1} \1_{\{ 0 \leq S^\ssup{n}(x) \leq M_n\}}  \Big) \Big] \\
&  \qquad + 
 C \, E_{(0,\tau)}^1 \bigg[ \Big(  \sum_{|x| =1}   e^{-\alpha_n^* S^\ssup{n}(x) }\1_{\{ S^\ssup{n}(x) \leq 0\}}\Big)
\Big(\sum_{|x|=1} \1_{\{ S^\ssup{n}(x) \leq 0 \}} \Big)  \bigg] . 
\end{aligned}
\end{equation}
For the first term in~\eqref{eq:2111-1}, we note that $f(k) \leq f_{t_n}(k) \leq f_{t_1}(k) \leq \gamma_1^+ k + f_{t_1}(0)$ (for $\gamma_1^+ = \sup \{ \Delta f_{t_1} (k), k \in \N_0\}$.
Therefore, if we let $(\hat Z^{f_{t_1}})_{t \geq 0}
$ be the jump process jumping from $k$ to $k+1$ at rate $f_{t_1}(k)$ started in~$1$, we can conclude that by construction of the IBRW
\[  E_{(0,\tau)}^1 \bigg[ \Big(  \sum_{|x| =1} \1_{\{ 0 \leq S^\ssup{n}(x) \leq M_n\}}  \Big)^2 \bigg]
\leq E \big[ \big(\hat Z^{f_{t_1}}_{M_n}\big)^2\big] \leq \tilde C(f_{t_1}) e^{\hat \gamma M_n}, \]
for some constants $C(f_{t_1})$ and $\hat \gamma > 0$, where the latter bound follows by comparison with a Yule process, whose second moments grow at most exponentially.

For the second term on the right hand side in~\eqref{eq:2111-1}, the first expectation is bounded uniformly in 
$n$ by (the second part of) Lemma~\ref{le:exp_moments} and the second expectation can be bounded by the second moment, so that the first 
part of the argument applies. 

For the final term in~\eqref{eq:2111-1}, we use that the particles to the left form a Poisson process, 
so that we can use a standard identity for Poisson processes, see e.g.~\cite[Equation (4.26)]{LastPenrose17}, to 
deduce that 
\[ \begin{aligned} E_{(0,\tau)} \bigg[ \Big(  \sum_{|x| =1}   e^{-\alpha_n^* S^\ssup{n}(x) }\1_{\{ S^\ssup{n}(x) \leq 0\}}\Big) &
\Big(\sum_{|x|=1} \1_{\{ S^\ssup{n}(x) \leq 0 \}} \Big)  \bigg]\\
&= \int_0^\infty e^{\alpha_n^*t} e^{-t} E[ f_{t_n} (X^\ssup{f_{t_n}}) ] \, dt
 \Big( 1 + \int_0^\infty  e^{-t} E[ f_{t_n} (X^\ssup{f_{t_n}}) ] \, dt \Big) . \end{aligned}\]
However, as in the proof of Lemma~\ref{le:exp_moments} the right hand side is bounded uniformly in $n$.
\end{proof}

\begin{corollary}\label{cor:negligible} In the setting of Lemma~\ref{le:bound_children}, we have for any sequence $N_n \ra \infty$ and with $R_n = e^{N^{1/4}}, M_n = N^{1/5}$ that there exist $\tilde C, \tilde \gamma > 0$ such that
\[ \inf_{\tau \in \typespace} \p_{(0,\tau)}^{\alpha_n^*} (  \bar\nu^\ssup{n}_{0} \leq R_n,  S^\ssup{n}_1 \leq M_n ) \geq 1 - \tilde C e^{-\tilde \gamma N^{1/5}}    .\]
\end{corollary}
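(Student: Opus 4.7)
The plan is to derive the corollary as a direct union bound over the two ``bad'' events $\{S_1^{(n)} > M_n\}$ and $\{\bar\nu_0^{(n)} > R_n\}$, using Lemma~\ref{le:exp_moments} for the first event and Lemma~\ref{le:bound_children} combined with Markov's inequality for the second. The key quantitative observation is that the choice $R_n = e^{N_n^{1/4}}$ grows on a strictly faster exponential scale than the bound $e^{\hat\gamma M_n} = e^{\hat\gamma N_n^{1/5}}$ coming from Lemma~\ref{le:bound_children}, so that the offspring-count event contributes negligibly compared with the jump event.

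Concretely, I would start from the union bound
\[
1 - \p_{(0,\tau)}^{\alpha_n^*}\bigl(\bar\nu_0^{(n)}\le R_n,\, S_1^{(n)}\le M_n\bigr) \le \p_{(0,\tau)}^{\alpha_n^*}\bigl(S_1^{(n)}>M_n\bigr) + \p_{(0,\tau)}^{\alpha_n^*}\bigl(\bar\nu_0^{(n)}>R_n,\, S_1^{(n)}\le M_n\bigr).
\]
For the first term on the right, by Lemma~\ref{le:exp_moments} there exists $\eta>0$ with $\sup_{n,\tau}\E_{(0,\tau)}^{\alpha_n^*}[e^{\eta S_1^{(n)}}]\le C_1<\infty$, and an application of Markov's inequality to $e^{\eta S_1^{(n)}}$ yields
\[
\p_{(0,\tau)}^{\alpha_n^*}\bigl(S_1^{(n)}>M_n\bigr) \le C_1 e^{-\eta M_n} = C_1 e^{-\eta N_n^{1/5}},
\]
uniformly in $\tau\in\typespace$ and $n\in\N$.

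For the second term, I apply Markov's inequality in the form $\p(X>R_n,A)\le R_n^{-1}\E[X\1_A]$ with $X=\bar\nu_0^{(n)}$ and $A=\{S_1^{(n)}\le M_n\}$, and then invoke Lemma~\ref{le:bound_children} to obtain
\[
\p_{(0,\tau)}^{\alpha_n^*}\bigl(\bar\nu_0^{(n)}>R_n,\, S_1^{(n)}\le M_n\bigr) \le R_n^{-1}\E_{(0,\tau)}^{\alpha_n^*}\bigl[\bar\nu_0^{(n)}\1_{\{S_1^{(n)}\le M_n\}}\bigr] \le C\, e^{\hat\gamma M_n - N_n^{1/4}} = C\,e^{\hat\gamma N_n^{1/5}-N_n^{1/4}},
\]
again uniformly in $\tau$ and $n$. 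Since $N_n^{1/4}$ eventually dominates any constant multiple of $N_n^{1/5}$, for all sufficiently large $n$ this expression is bounded by $C e^{-\hat\gamma N_n^{1/5}}$; the finitely many remaining $n$ can be absorbed by enlarging the prefactor. Setting $\tilde\gamma:=\tfrac12\min(\eta,\hat\gamma)$ and $\tilde C$ large enough to swallow both prefactors and the small-$n$ terms then gives the stated bound.

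There is no substantial obstacle; the corollary is essentially a repackaging of Lemmas~\ref{le:exp_moments} and~\ref{le:bound_children}. The only point requiring care is to verify that the scales $R_n = e^{N_n^{1/4}}$ and $M_n = N_n^{1/5}$ have been calibrated so that the first-moment bound from Lemma~\ref{le:bound_children} strictly beats $R_n$, which is exactly what makes the gap $N_n^{1/4}-\hat\gamma N_n^{1/5}\to\infty$ at a polynomial rate in $N_n$.
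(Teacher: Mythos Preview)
Your proof is correct and follows essentially the same approach as the paper: a union bound on the complement, with Lemma~\ref{le:exp_moments} and Markov's inequality handling $\{S_1^{(n)}>M_n\}$, and Lemma~\ref{le:bound_children} together with Markov's inequality handling $\{\bar\nu_0^{(n)}>R_n,\,S_1^{(n)}\le M_n\}$. The paper's version is slightly terser but otherwise identical in structure and content.
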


\begin{proof}
By Lemma~\ref{le:exp_moments}, there exists $\eta > 0$ such that
$C := \sup_{n\in \N_0, \tau} \E_{(0,\tau)}^{\alpha_n^*}[ e^{\eta S_1^\ssup{n}}] < \infty$. 
Moreover, by Lemma~\ref{le:bound_children}, there exist $ C' > 0,  \gamma' > 0$ such that
$\sup_{\tau} \E^\alpha_{(0,\tau)}[ \bar\nu_{0},  S_1 \leq M_n] \leq  C' e^{ \gamma' M_n}$. 
Hence, we can estimate by Chebyshev's inequality
\[\begin{aligned} 
\sup_{\tau \in \typespace} \p_{(0,\tau)}^{\alpha_n^*}\big( (\bar\nu_{0} & \leq R_n,  S_1 \leq M_n)^c \big) \\
& \leq 
P^\alpha_{(0,\tau_{k-1})}( S_1 \geq M_n )
+P^\alpha_{(0,\tau_{k-1})} (  \bar\nu_{0} \geq R_n ,  S_1 \leq M_n)\\
& \leq e^{-\eta M_n} + \frac{1}{R_n}  C' e^{ \gamma' M_n} 
=  e^{-\eta N^{1/5}} +  C' e^{\gamma' N^{1/5} - N^{1/4}}, 
\end{aligned} \]
 by our choice of $M_n$ and $R_n$.
Therefore, the statement of the corollary follows by choosing $\tilde \gamma$ and $\tilde C$ appropriately.
\end{proof}

\subsection{Mogulskii's theorem}\label{sec:mogulskii}

The main technical tool in the proof of our main result is the following large deviation result 
due to Mogulskii in its original form. We state it here in a version adapted to Markov chains as a generalisation
of the version for random walks found  in \cite{GHS11}.

\begin{theorem}[{\bf{\cite{Mo74},\cite{GHS11}}}]\label{thm:mogulskii}
Let $\typespace$ be a nonempty set. We assume the following:
\begin{itemize}
\item[{\rm (i)}] For each $n \in \N$, $((\loc_i^{\sss(n)},\type_i^{\sss(n)})\colon i \in \N)$ is a Markov chain with values in $\R \times \typespace$. 
\item[{\rm (ii)}] $(a_n)_{n \in \N}$ and  $((k_n)_{n \in \N})$ are positive sequences with $a_n, k_n \to \infty$ and $a_n^2/k_n\to 0$ as $n \to \infty$. 
\item[{\rm (iii)}] For all $c_1,c_2,c_3,c_4 \in \R$, $A>0$ and for $r_n:=\lfloor Aa_n^2\rfloor$,
\begin{equation}\label{eq:Donsker}\begin{aligned}
\P_{(0,\type)}\Big(c_1 \le \frac{S_i^{\sss (n)}}{a_n} & \le c_2 \, \forall\, i \in [r_n];c_3 \leq \frac{S_{r_n}^{\sss (n)}}{a_n} \le c_4\Big) \\
& \xrightarrow{n\to \infty} P(c_1 \le \sqrt{\sigma^2 A}W_t \le c_2\, \forall\, t \in [0,1]; c_3 \leq 
\sqrt{\sigma^2 A}W_1 \le c_4)\end{aligned} 
\end{equation}
uniformly in $\type \in \typespace$, where $\sigma^2>0$ is a constant independent of $A$ and $(W_t\colon t \ge 0;P)$ is a standard Brownian motion.
\end{itemize}
 Let $g_1 < g_2$ be two continuous functions on $[0,1]$ with $g_1(0)\le 0\le g_2(0)$ and denote
\[
E_n:=\Big\{g_1\Big(\frac{i}{k_n}\Big) \le \frac{S_i^{\sss(n)}}{a_n} \le g_2\Big(\frac{i}{k_n}\Big) \; \forall\, i\in [k_n]\Big\}.
\]
Then, for all $\type_0 \in \typespace$,
\[
\lim_{n \to \infty} \frac{a_n^2}{k_n}\log P_{(0,\type_0)}(E_n) =-\frac{\pi^2 \sigma^2}{2} \int_0^1 \frac{1}{[g_2(t)-g_1(t)]^2} \; dt.
\]
Moreover, for any $b > 0$, $\tau_0 \in \mathcal{T}$,
\[
\lim_{n \to \infty} \frac{a_n^2}{k_n}\log P_{(0,\type_0)}(E_n, S_{k_n}^\ssup{n} \geq (g_2(1) - b)a_n) =-\frac{\pi^2 \sigma^2}{2} \int_0^1 \frac{1}{[g_2(t)-g_1(t)]^2} \; dt.
\]
\end{theorem}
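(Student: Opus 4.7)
The plan is to follow the classical proof strategy for Mogulskii's theorem via a block decomposition in time, adapted to the Markov chain setting using the uniform Donsker-type hypothesis (iii). I would partition $[0,1]$ into $N$ equal subintervals $I_j=[(j-1)/N, j/N]$; since $g_1, g_2$ are uniformly continuous, for any given $\delta>0$, choosing $N$ large enough ensures $g_i$ oscillates by at most $\delta$ on each $I_j$. Let $m_j^i$ and $M_j^i$ denote the infimum and supremum of $g_i$ over $I_j$. Approximately $r_n := \lfloor k_n/N\rfloor$ steps of the chain fall in each block, and by the Markov property I can factorise the probability $P_{(0,\tau_0)}(E_n)$ as a product of one-block probabilities, each started from the (random) state at the end of the previous block. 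The uniformity in the initial type in hypothesis (iii) is what makes this decomposition tractable: the block-wise asymptotics hold uniformly in the starting state passed on from the previous block.

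For the upper bound in the first statement, I replace the constraint $g_1(i/k_n)\le S_i^{\sss(n)}/a_n\le g_2(i/k_n)$ on block $j$ by the weaker $m_j^1\le S_i^{\sss(n)}/a_n\le M_j^2$; hypothesis (iii) with $A=1/N$ yields that the probability of staying in this constant-width tube on one block has the asymptotic log-behaviour $-\frac{\pi^2\sigma^2}{2N}(M_j^2-m_j^1)^{-2}\cdot k_n/a_n^2$. Taking the product over $j$, the logarithm becomes a Riemann sum converging as $N\to\infty$ (with $\delta\to0$) to $-\frac{\pi^2\sigma^2}{2}\int_0^1(g_2-g_1)^{-2}\,dt$. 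For the lower bound I work with the slightly narrower tube $g_1+\delta\le S_i^{\sss(n)}/a_n\le g_2-\delta$ on each block, and additionally pin the chain to a small window inside this tube at each block boundary. This ensures both that one-block estimates can be concatenated via the Markov property and that the next block starts comfortably inside its own tube; hypothesis (iii) is powerful enough to yield the same Riemann-sum asymptotic for this constrained event, since it gives matching asymptotics for Brownian probabilities with both path and endpoint constraints.

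For the second, endpoint-constrained statement, the upper bound is immediate: the added constraint $S_{k_n}^{\sss(n)}\ge (g_2(1)-b)a_n$ makes the event a subset of $E_n$, so the upper bound inherited from the first statement suffices. For the lower bound, I modify the construction on the final block only: require $S_{k_n}^{\sss(n)}/a_n$ to lie in a small window $[g_2(1)-b+\eps,\, g_2(1)-\eps]$ near the upper boundary. Hypothesis (iii) applied on the final block (with the $c_3,c_4$ parameters) produces exactly this joint tube-plus-endpoint probability, and the additional Brownian-probability factor contributes only a constant and therefore vanishes after division by $k_n/a_n^2$. The bulk of the logarithmic rate is still captured by the Riemann sum, giving the claimed limit. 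The principal technical obstacle is bookkeeping the uniformity: because the type $\tau_{jr_n}^{\sss(n)}$ at each block boundary is random and not under our control, every block-wise application of (iii) must be uniform in $\tau$, and the error terms from the finite-$N$ approximation of $g_1,g_2$ by step functions must be controlled uniformly in $n$ before letting $\delta,\eps\to0$ in the right order.
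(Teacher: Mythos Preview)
Your overall architecture (block decomposition via the Markov property, piecewise-constant approximation of $g_1,g_2$, uniformity in the starting type, Riemann-sum limit, and pinning at block boundaries for the lower bound and the endpoint-constrained version) is the same as the paper's. However, there is a genuine gap in how you invoke hypothesis~(iii).

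You take $N$ blocks of length $r_n:=\lfloor k_n/N\rfloor$ and then write ``hypothesis~(iii) with $A=1/N$''. But hypothesis~(iii) is stated for blocks of length $\lfloor A a_n^2\rfloor$ with $A$ \emph{fixed}; taking $A=1/N$ gives blocks of length $a_n^2/N$, not $k_n/N$. Conversely, matching $\lfloor A a_n^2\rfloor$ to $k_n/N$ would force $A=k_n/(Na_n^2)\to\infty$, which is outside the hypothesis. Over a block of length $A a_n^2$ with $A$ fixed, the limiting Brownian probability $P(c_1\le\sqrt{\sigma^2 A}\,W_t\le c_2\ \forall t\in[0,1])$ is a positive constant, not the exponentially small quantity $\exp\bigl(-\tfrac{\pi^2\sigma^2}{2N}(M_j^2-m_j^1)^{-2}\,k_n/a_n^2\bigr)$ you claim. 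The exponential decay only emerges after multiplying over \emph{many} such Donsker-scale sub-blocks.

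The fix is exactly what the paper does: use a two-level hierarchy. First cut $[k_n]$ into $N\approx k_n/(Aa_n^2)$ small blocks of length $r_n=\lfloor Aa_n^2\rfloor$, to each of which hypothesis~(iii) applies and yields a Brownian strip probability bounded by $\exp\bigl(-\tfrac{\pi^2\sigma^2(1-\delta)A}{2w^2}\bigr)$. Then group these $N$ blocks into $A$ coarse groups $J_1,\dots,J_A$, on each of which $g_1,g_2$ are essentially constant (your uniform-continuity step). The product over the $|J_l|\approx k_n/(A^2a_n^2)$ small blocks in group $l$ produces the factor $\exp\bigl(-\tfrac{\pi^2\sigma^2}{2w_l^2}\cdot\tfrac{k_n}{Aa_n^2}\bigr)$, and summing $\tfrac{1}{A}\sum_l w_l^{-2}$ over $l$ gives the Riemann sum. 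Your parameter $N$ plays the role of the paper's $A$; what you are missing is the inner layer of Donsker-scale sub-blocks inside each of your $N$ macro-blocks. With that correction your argument goes through and coincides with the paper's.
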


The proof of Theorem~\ref{thm:mogulskii} is postponed to Appendix~\ref{mogulskii_proof}. 

 We will now show that we can apply Mogulskii's results to the Markov chains from Lemmas~\ref{lem:mto} and~\ref{le:mto_2}. We will treat both the setting of Theorem~\ref{thm:percol} and~\ref{thm:vary_f} at the same time 
and so continue using the notation introduced at the beginning of Section~\ref{ssn:asymp_moments}.

To this end, we first recall Donsker's theorem for martingale difference arrays (see for example Theorem~7.7.3 in \cite{Du91} or Theorem~18.2 in \cite{Bi99}). 
The theorem is usually stated for $r_n=n$, but it is straightforward to generalize the statement to the following:

\begin{proposition}\label{Donsker_prop}
Let $(r_n)_{n \in \N} \in \N^{\N}$ be a sequence with $r_n \uparrow \infty$ as $n \to \infty$. For every $n\in \N$, let $(\xi_i^n\colon 1 \le i \le r_n)$ be a family of random variables and denote $\F_i^n=\sigma(\xi_1^n,\dots,\xi_i^n)$ for all $i \le r_n$. Assume that
\begin{itemize}
\item[{\rm (i)}] $E[\xi_i^n|\F_{i-1}^n]=0$ for all $i \le r_n$, $n \in \N$.
\item[{\rm (ii)}] For all $t \in [0,1]$, $\sum_{i \le \lfloor tr_n\rfloor} E[(\xi_i^n)^2|\F_{i-1}^n] \to t$ in probability as $n \to \infty$.
\item[{\rm (iii)}] For all $\eps>0$, $\sum_{i \le r_n} E[(\xi_i^n)^2 \mathbbm{1}_{|\xi_i^n|>\eps}|\F_{i-1}^n] \to 0$ in probability as $n \to \infty$.
\end{itemize}
Then the linear interpolation of $(\sum_{i \le m} \xi_i^n\colon m \le r_n)$ converges weakly to a standard Brownian motion on $[0,1]$.
\end{proposition}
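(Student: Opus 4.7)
The plan is to reduce the statement to the classical martingale functional central limit theorem in the form of Durrett's Theorem~7.7.3 or Billingsley's Theorem~18.2, by a time-change along $r_n$ rather than $n$. Concretely, I would define for each $n \in \N$ the polygonal process
\[
M^n(t) := \sum_{i=1}^{\lfloor t r_n\rfloor} \xi_i^n + (tr_n - \lfloor tr_n\rfloor)\,\xi_{\lfloor tr_n\rfloor+1}^n, \qquad t \in [0,1],
\]
so that the linear interpolation of $(\sum_{i \le m} \xi_i^n \colon m \le r_n)$, rescaled so that its index runs in $[0,1]$, is precisely $M^n$. The goal is to show $M^n \Rightarrow W$ in $C[0,1]$ equipped with the uniform topology, where $W$ is a standard Brownian motion.

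Weak convergence in $C[0,1]$ reduces to (a) convergence of finite-dimensional distributions and (b) tightness. For (a), fix $0=t_0<t_1<\cdots<t_d\leq 1$. Hypothesis (i) says $(\xi_i^n)$ is a martingale difference array with respect to $(\F_i^n)$; hypothesis (ii) gives, for the conditional quadratic variation restricted up to index $\lfloor t r_n \rfloor$, convergence in probability to $t$; hypothesis (iii) is exactly the Lindeberg condition along the full array up to $r_n$, which in particular implies the conditional Lindeberg condition up to $\lfloor tr_n\rfloor$ for every $t \in [0,1]$. These are precisely the hypotheses of the martingale central limit theorem applied to the triangular array $(\xi_i^n\colon 1 \le i \le \lfloor t_d r_n\rfloor)$, and they yield joint convergence of the increments $M^n(t_j)-M^n(t_{j-1})$ to independent centred Gaussians with variances $t_j-t_{j-1}$, giving the desired finite-dimensional limit.

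For (b), tightness in $C[0,1]$ follows from a standard argument: Doob's $L^2$-maximal inequality applied to the martingale partial sums, combined with the conditional quadratic variation control from (ii), bounds $\p(\sup_{|s-t|\le \delta}|M^n(t)-M^n(s)|>\eps)$ uniformly in $n$, while the Lindeberg condition (iii) rules out large single jumps on the interpolation scale and ensures the continuity modulus is controlled; this is exactly the route taken in the classical proofs we quote, and it goes through unchanged upon replacing $n$ by $r_n$ as the array length.

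I do not expect a genuine obstacle here: the content is essentially bookkeeping, recognizing that all three hypotheses of the classical martingale Donsker theorem have been stated relative to the sequence $r_n$ rather than $n$, so the classical proof applies verbatim with the relabelling $n \leftrightarrow r_n$ in the time axis. The only point requiring a moment of care is that the filtrations $(\F_i^n)$ vary with $n$, but the martingale functional CLT as stated in the references already permits triangular arrays with row-dependent filtrations, so this poses no additional difficulty.
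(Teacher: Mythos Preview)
Your proposal is correct and matches the paper's treatment: the paper does not give a proof at all, but simply cites Durrett's Theorem~7.7.3 and Billingsley's Theorem~18.2 and remarks that the usual statement with $r_n=n$ generalizes straightforwardly. Your write-up is a faithful (and more detailed) unpacking of exactly that observation, namely that relabelling $n\leftrightarrow r_n$ in the classical martingale Donsker theorem for triangular arrays is all that is needed.
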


We use Donsker's theorem as follows.

\begin{lemma}\label{lem:DonskCondCheck}
Let $A>0$, $(a_n)_{n \in \N}$ be a positive sequence with $\lim_{n \to \infty} a_n =\infty$
and write $r_n=\lfloor Aa_n^2\rfloor$ for each $n \in \N$. Moreover, let $(S_i^\ssup{n}, \tau_i^\ssup{n})_{i \in \N_0}$ be the Markov chain introduced in Lemma~\ref{lem:mto} for attachment rule $f_{t_n}$.
For all $c_1,c_2, c_3, c_4 \in \R$,
\[
\P_{(0,\type)}\Big(c_1 \le \frac{S_i^\ssup{n}}{a_n} \le c_2 \, \forall\, i \in [r_n]; c_3 \le \frac{S_{r_n}^\ssup{n}}{a_n} \le c_4\Big) \to P(c_1 \le \sqrt{\sigma^2 A}W_t \le c_2\, \forall\, t \in [0,1]; c_3 \leq \sqrt{\sigma^2 A}W_1 \le c_4)
\]
as $n \to \infty$, uniformly in $\type \in \typespace$, where $(W_t\colon t \ge 0;P)$ is a standard Brownian motion.
\end{lemma}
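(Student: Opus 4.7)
The plan is to apply the functional CLT for martingale difference arrays, Proposition~\ref{Donsker_prop}, to the increments of $(S_i^{\ssup{n}})$ under $\P_{(0,\type)}^{\alpha_n^*}$, and then translate Donsker-type weak convergence in $C[0,1]$ into the required convergence of probabilities via Portmanteau's theorem.

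The key algebraic observation is that, because $\alpha_n^*$ is the minimizer of $\rho_n$, Lemma~\ref{moments_lem} applied to the attachment rule $f_{t_n}$ gives
\[
\E_{(0,\type)}^{\alpha_n^*}[S_1^{\ssup{n}}] = -\rho_n'(\alpha_n^*)/\rho_n(\alpha_n^*) = 0, \qquad \E_{(0,\type)}^{\alpha_n^*}\bigl[(S_1^{\ssup{n}})^2\bigr] = \rho_n''(\alpha_n^*)/\rho_n(\alpha_n^*) =: \sigma_n^2,
\]
both independent of $\type$. Combined with the Markov property this yields, conditionally on $\F_{i-1}^n := \sigma(S_j^{\ssup{n}} : j \le i-1)$, a conditional mean $0$ and conditional variance $\sigma_n^2$ for the increment $S_i^{\ssup{n}} - S_{i-1}^{\ssup{n}}$, regardless of the random type $\type_{i-1}^{\ssup{n}}$ or of the initial type $\type$. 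By Proposition~\ref{prop:conv}(ii),(iv) one has $\sigma_n^2 \to \sigma^2 := \rho''(\alpha^*)/\rho(\alpha^*) > 0$. Setting $\xi_i^n := (S_i^{\ssup{n}} - S_{i-1}^{\ssup{n}})/\sqrt{r_n \sigma_n^2}$, the martingale condition (i) of Proposition~\ref{Donsker_prop} and the quadratic-variation condition (ii) are then immediate -- in fact $\sum_{i\le m}\E[(\xi_i^n)^2|\F_{i-1}^n] = m/r_n$ is deterministic. The Lindeberg condition (iii) follows from the uniform exponential-moment bound $\sup_{n,\type}\E_{(0,\type)}^{\alpha_n^*}[e^{\eta |S_1^{\ssup{n}}|}] < \infty$ of Lemma~\ref{le:exp_moments}: a Cauchy--Schwarz splitting bounds each truncated second moment by $C e^{-\eta\eps\sqrt{r_n\sigma_n^2}/4}$ uniformly in $\type$, and the Lindeberg sum of $r_n$ such terms vanishes since $\sqrt{r_n\sigma_n^2} \to \infty$.

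Proposition~\ref{Donsker_prop} then yields convergence in $C[0,1]$ of the linear interpolation of $\bigl(\sum_{i\le m}\xi_i^n\bigr)_{m \le r_n}$ to a standard Brownian motion $W$. Multiplying by $\sqrt{r_n\sigma_n^2}/a_n \to \sqrt{A\sigma^2}$ converts this to convergence of the linear interpolation of $(S_m^{\ssup{n}}/a_n)_{m \le r_n}$ to $\sqrt{\sigma^2 A}\,W$ in $C[0,1]$. The event appearing in the lemma is the preimage under this interpolation of the set $\{w \in C[0,1] : c_1 \le w(t) \le c_2 \ \forall\, t \in [0,1],\ c_3 \le w(1) \le c_4\}$, whose topological boundary has Wiener measure zero, so Portmanteau's theorem delivers the stated limit.

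The main obstacle is the uniformity in $\type \in \typespace$. The argument above gives convergence under each fixed $\P_{(0,\type)}^{\alpha_n^*}$, but the claim is uniform. This should nonetheless go through because the conditional first and second moments of every increment are type-independent, and the uniform exponential-moment bound of Lemma~\ref{le:exp_moments} makes the Lindeberg and tightness constants $\type$-free as well. I would address this by revisiting the proof of Proposition~\ref{Donsker_prop} and checking that each estimate -- the Aldous-type tightness criterion in $C[0,1]$ and the Lindeberg--Feller bound for finite-dimensional distributions -- comes with constants depending only on $\sigma^2$ and on the exponential-moment parameter $\eta$ from Lemma~\ref{le:exp_moments}, neither of which depends on $\type$. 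The initial type enters only through the distribution of the first increment, whose contribution to the rescaled chain is $O_{\P}(1/a_n)$ and hence negligible uniformly in $\type$.
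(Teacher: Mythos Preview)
Your verification of the three conditions of Proposition~\ref{Donsker_prop} is essentially the same as the paper's: the same choice of $\xi_i^n$, the same use of Lemma~\ref{moments_lem} with $\rho_n'(\alpha_n^*)=0$ for condition~(i), the same deterministic sum for condition~(ii), and the same appeal to Lemma~\ref{le:exp_moments} for the Lindeberg condition~(iii). The normalisation by $\sigma_n^2$ versus $\sigma^2$ is immaterial. The passage from Donsker convergence to the stated probability via Portmanteau is implicit in the paper but you make it explicit, which is fine.

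Where you diverge is on uniformity in $\type$. You propose to re-open the proof of Proposition~\ref{Donsker_prop} and argue that all constants are type-free because the conditional moments and the exponential-moment bound of Lemma~\ref{le:exp_moments} are uniform. This is plausible but laborious, and you leave it as a sketch. The paper instead dispatches the issue in one line: by \cite[Remark~2.6]{DerMoe13} the IBRW is monotone in the start type, which via the many-to-one lemma yields a stochastic monotonicity of $(S_i^{\ssup{n}})$ in~$\type$; since the limiting probability does not depend on~$\type$, pointwise convergence for each $\type$ upgrades to uniform convergence by a sandwich argument. This buys a complete proof without unpacking Donsker, at the cost of invoking an external structural fact about the IBRW. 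Your route would be more self-contained but needs the details filled in to be a proof rather than a programme.
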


\begin{proof} The first step is to 
show that the conditions of Proposition~\ref{Donsker_prop} are satisfied by the random variables
\begin{equation}\label{def_ximn}
\xi_i^n:=\frac{S_i^\ssup{n}-S_{i-1}^\ssup{n}}{\sqrt{r_n \sigma^2}},\qquad i=1,\ldots,r_n.
\end{equation}
Let $\F_i^{n}=\sigma(S_1^\ssup{n},\ldots,S_i^\ssup{n})$. For all $i \in [r_n]$,
\begin{align*}
\E&[\xi_i^{n}|\F_{i-1}^n]=\frac{1}{\sqrt{r_n \sigma^2}} \E_{(\loc_{i-1}^\ssup{n},\type^\ssup{n}_{i-1})}^{\alpha_n^*}[\loc_1^\ssup{n}-\loc_0^\ssup{n}]= \frac{1}{\sqrt{r_n \sigma^2}} \E_{(0,\type^\ssup{n}_{i-1})}^{\alpha_n^*}[\loc_1^\ssup{n}]=0
\end{align*}
by Lemma~\ref{moments_lem} and $\rho_n'(\alpha_n^*)=0$ as $\alpha_n^*$ minimizes $\rho_n$. Moreover, as $n \to \infty$,
\begin{align*}
\sum_{i \le \lfloor t r_n\rfloor } \E[(\xi_i^n)^2|\F_{i-1}^n]=\sum_{i \le \lfloor t r_n\rfloor} \frac{1}{r_n \sigma^2} \E_{(\loc_{i-1}^\ssup{n},\type^\ssup{n}_{i-1})}^{\alpha_n^*}\big[(\loc_1^\ssup{n}-\loc_0^\ssup{n})^2\big]=\frac{\lfloor t r_n\rfloor}{r_n \sigma^2} \frac{\rho_n''(\alpha_n^*)}{\rho_n(\alpha_n^*)} \to \frac{t}{\sigma^2} \sigma^2=t,
\end{align*}
since $r_n \to \infty$ and $\rho_n(\alpha_n^*)/\rho_n''(\alpha_n^*)\to \sigma^2$ by definition. Thus, Condition~(ii) of Proposition~\ref{Donsker_prop} is satisfied. Finally, let $\eta$ be as in Lemma~\ref{le:exp_moments}, then
\begin{align*}
\sum_{i \le r_n} \E\big[(\xi_i^n)^2 \mathbbm{1}_{|\xi_i^n|>\eps}|\F_{i-1}^n\big] &=\frac{1}{r_n \sigma^2} \sum_{i \le r_n} \E_{(0,\type^\ssup{n}_{i-1})}^{\alpha_n^*}\big[(\loc_1^\ssup{n})^2 \mathbbm{1}\big\{|\loc_1^\ssup{n}|>\eps \sqrt{r_n \sigma^2}\big\}\big]\\
&\le \sigma^{-2} \sup_{\type \in \typespace} \E_{(0,\type)}^{\alpha_n^*}\big[(\loc_1^\ssup{n})^2 \mathbbm{1}\big\{|\loc_1^\ssup{n}|>\eps \sqrt{r_n \sigma^2}\big\}\big]\\
& \leq C \sigma^{-2} \frac{1}{\eps \sqrt{r_n \sigma^2}} \sup_{\type \in \typespace} \E_{(0,\type)}^{\alpha_n^*}\big[e^{\eta |(\loc_1^\ssup{n})|} \big] , 
\end{align*}
where $C$ is a constant such that $x^2 \leq C e^{\eta|x|}$ for all $ x\in \R$.
Then, by Lemma~\ref{le:exp_moments} the exponential moment is bounded uniformly in $n$, so that 
the right hand  side converges to zero as required.

To apply Theorem~\ref{thm:mogulskii} we have to check that the convergence of the distribution functions is uniform in the start type. 
This is guaranteed by the monotonicity of the $\IBRW$ in the start type (which was proven in \cite[Remark 2.6]{DerMoe13}) which entails a monotonicity of $(S_i^\ssup{n})$ by the many-to-one lemma, and by the fact that the limit is independent of the start type.
\end{proof}

\begin{lemma}\label{lem:DonskCondCheck2} Let $N_n$ be a positive sequence with $\lim_{n \ra \infty} N_n = \infty$ and set 
$a_n = N_n^{1/3}, M_n = N_n^{1/5}, R_n = e^{N^{1/4}}$.
Consider the  Markov chain $(\tilde S^\ssup{n}_k, \tilde \tau^\ssup{n}_k, \tilde \nu^\ssup{n}_{k-1})_{k \in \N_0}$  with filtration $(\mathcal F_k^\ssup{n})_{k \in \N_0}$ and transitions given for any measurable $F$ by 
\[ \E [ F(\tilde S_k^\ssup{n} - \tilde S^\ssup{n}_{k-1}, \tilde \tau^\ssup{n}_{\tau_k} , \tilde \nu^\ssup{n}_{k-1} )\, |\,
\mathcal{F}_{k-1}^\ssup{n} ] 
= \E^{\alpha_n}_{(0,\tau_{k-1})} [ F(S_1^\ssup{n}, \tau^\ssup{n}_1, \nu^\ssup{n}_0) \, |\, \nu^\ssup{n}_0((-\infty, M_n) ) \leq R_n, S^\ssup{n}_1\leq M_n ] .
\]
where $(S_1^\ssup{n}, \tau^\ssup{n}_1, \nu^\ssup{n}_0)$ is the first step of the  Markov chain 
defined in Lemma~\ref{le:mto_2} associated either to the IBRW with attachment rule $f_{t_n}$ and $p=1$
or with attachment rule $f$ and percolation parameter $p_n$.
Let $A>0$, 
 write $r_n=\lfloor Aa_n^2\rfloor$. For all $c_1,c_2, c_3,c_4 \in \R$,
\begin{equation}\label{eq:2411-1}\begin{aligned} 
\P_{(0,\type)}\Big(c_1 \le \frac{\tilde S_i^\ssup{n}}{a_n} & \le c_2 \, \forall\, i \in [r_n]; c_3 \le \frac{\tilde S_{r_n}^\ssup{n}}{a_n} \le c_4\Big) \\
 & \to P(c_1 \le \sqrt{\sigma^2 A}W_t \le c_2\, \forall\, t \in [0,1]; c_3 \leq \sqrt{\sigma^2 A}W_1 \le c_4)
\end{aligned}
\end{equation}
as $n \to \infty$, uniformly in $\type \in \typespace$, where $(W_t\colon t \ge 0;P)$ is a standard Brownian motion.
\end{lemma}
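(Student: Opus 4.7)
The plan is to reduce Lemma~\ref{lem:DonskCondCheck2} to Lemma~\ref{lem:DonskCondCheck} by a coupling argument that exploits the near-triviality of the one-step conditioning. The key observation is that the $(S,\type)$-marginal of the Markov chain from Lemma~\ref{le:mto_2} has the same law as the chain from Lemma~\ref{lem:mto}: in the many-to-one identity of Lemma~\ref{le:mto_2} one may take the test function $F$ not depending on the $\nu$-coordinates and recover exactly the formula of Lemma~\ref{lem:mto}. Hence Lemma~\ref{lem:DonskCondCheck} already supplies the analogue of~\eqref{eq:2411-1} with $\tilde S^\ssup{n}$ replaced by the unconditioned $S^\ssup{n}$, uniformly in $\type\in\typespace$.

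Next, I will couple the chains $(\tilde S_i^\ssup{n}, \tilde\type_i^\ssup{n})_{i \le r_n}$ and $(S_i^\ssup{n}, \type_i^\ssup{n})_{i \le r_n}$ step by step via a maximal coupling. At each step the conditional law of a one-step increment of the $\tilde{\,\cdot\,}$-chain is obtained from the unconditioned law by conditioning on the event $\{S_1^\ssup{n}\le M_n,\ \nu_0^\ssup{n}((-\infty, M_n)) \le R_n\}$, which by Corollary~\ref{cor:negligible} has probability at least $1 - \tilde C e^{-\tilde\gamma N_n^{1/5}}$ uniformly in the current type. Consequently the one-step total variation distance between the $\tilde{\,\cdot\,}$- and $\cdot$-transitions is bounded by $2\tilde C e^{-\tilde\gamma N_n^{1/5}}$, uniformly in $\type$. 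A union bound over the first $r_n$ steps, applied to the first disagreement time $\sigma$ of the maximal coupling, then yields
\[ \sup_{\type \in \typespace} \p^{\alpha_n^*}_{(0,\type)}(\sigma \le r_n) \;\le\; 2 r_n \tilde C e^{-\tilde\gamma N_n^{1/5}} \;\xrightarrow{n\to\infty}\; 0, \]
since $r_n = \lfloor A a_n^2\rfloor \le A N_n^{2/3}$ and exponential decay dominates any polynomial growth.

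From this it follows that for every event $B$ in path space,
\[ \sup_{\type \in \typespace} \bigl|\p^{\alpha_n^*}_{(0,\type)}\bigl((\tilde S_i^\ssup{n})_{i \le r_n} \in B\bigr) - \p^{\alpha_n^*}_{(0,\type)}\bigl((S_i^\ssup{n})_{i \le r_n} \in B\bigr)\bigr| \;\xrightarrow{n\to\infty}\; 0, \]
so applying this to the event in~\eqref{eq:2411-1} and combining it with Lemma~\ref{lem:DonskCondCheck} gives the claimed uniform convergence. I expect the only genuinely delicate point to be the uniformity in $\type$ of the coupling error, together with the need for Corollary~\ref{cor:negligible} to hold simultaneously in both the varying-$f$ and the fixed-$f$/varying-$p$ settings; but both properties are precisely what is asserted there, so the argument should go through without further complication.
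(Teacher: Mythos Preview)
Your proposal is correct and follows essentially the same approach as the paper: both arguments reduce to Lemma~\ref{lem:DonskCondCheck} by showing that the conditioned chain $\tilde S^\ssup{n}$ and the unconditioned chain $S^\ssup{n}$ differ with probability $O(r_n e^{-\tilde\gamma N_n^{1/5}})\to 0$ over $r_n$ steps, using Corollary~\ref{cor:negligible} for the one-step error. The only difference is cosmetic---you phrase the comparison via a step-by-step maximal coupling and a union bound on the first disagreement time, whereas the paper iterates the elementary inequalities $\P(\,\cdot\mid E_n)\le \P(\,\cdot\,)/\P(E_n)$ and $\P(\,\cdot\mid E_n)\ge \P(\,\cdot\,)-\P(E_n^c)$ directly through the Markov property to obtain matching multiplicative and additive error terms.
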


\begin{proof} We show that we can replace $\tilde S_i^\ssup{n}$ by $S_i^\ssup{n}$ in the above probability up to an error that converges to $0$ uniformly in $\tau$ and then invoke Lemma~\ref{lem:DonskCondCheck}. Define the event
\[ E_n := \{\bar \nu_0^\ssup{n} \leq R_n , S_1^\ssup{n} - S_0^\ssup{n} \leq M_n \}. \]
and note that by Corollary~\ref{cor:negligible}, there exist constants $ \tilde C, \tilde \gamma > 0$ such that 
\begin{equation}\label{eq:2311-1} \inf_{\tau \in \typespace} \p_{(0,\tau)} ( E_n) \geq 1 - \tilde C e^{-\tilde \gamma N^{1/5}}    .\end{equation}
We estimate for any $c^i_1, c^i_2 \in \R$
\[\begin{aligned}  \p_{(0,\tau)}^{\alpha_n^*}\Big( c_1^i \leq & \frac{ \tilde S_i^\ssup{n}}{a_n} \leq c_2^i \, \forall i \in [r_n] \Big) \\
& \leq  \E_{(0,\tau)}^{\alpha_n^*} \bigg[ \1{ \{ c_1^i \leq \frac{ \tilde S_i^\ssup{n}}{a_n} \leq c_2^i \, \forall i \in [r_n-1 ]\}} 
\frac{ \p_{(\tilde S_{r_n-1}, \tilde \tau_{r_n -1} )}^{\alpha_n^*} ( c_1^{r_n} \leq S_1^\ssup{n} \leq c_2^{r_n} ) }{ \p_{(0,\tilde \tau_{r_n -1})}^{\alpha_n^*}(E_n)} \bigg] \\
& \leq  \E_{(0,\tau)}^{\alpha_n^*} \Big[ \1{ \{ c_1^i \leq \frac{ \tilde S_i^\ssup{n}}{a_n} \leq c_2^i \, \forall i \in [r_n-1 ]\}} 
 \p_{(\tilde S_{r_n-1}, \tilde \tau_{r_n -1} )}^{\alpha_n^*} ( c_1^{r_n} \leq S_1^\ssup{n} \leq c_2^{r_n} )  \Big] ( 1  + 2 \tilde C e^{-\tilde \gamma N^{1/5}} ), 
\end{aligned} \]
where we assume that $n$ is sufficiently large and we used~\eqref{eq:2311-1} together with  $\frac{1}{1-x} \leq 1 +2x$ for $x \in [0,\frac 12]$.
Iterating this estimate yields
\begin{equation}\label{eq:2911-1}\begin{aligned}  \p_{(0,\tau)}^{\alpha_n^*} \Big( c_1^i \leq & \frac{ \tilde S_i^\ssup{n}}{a_n} \leq c_2^i \, \forall i \in [r_n] \Big) 
& \leq \P_{(0,\type)}\Big(c_1^i \le \frac{ S_i^n}{a_n} \le c_2^i \, \forall\, i \in [r_n]\Big) (1 + 2 \tilde C e^{-\tilde \gamma N^{1/5}})^{r_n} . \end{aligned}  \end{equation}
and we note that by the choice of $r_n$ the error converges to $0$.

For a lower bound, we estimate
\[\begin{aligned}  \p_{(0,\tau)}^{\alpha_n^*} \Big( c_1^i \leq & \frac{ \tilde S_i^\ssup{n}}{a_n} \leq c_2^i \, \forall i \in [r_n] \Big) \\
&  \geq  \E_{(0,\tau)}^{\alpha_n^*} \Big[ \1{ \{ c_1^i \leq \frac{ \tilde S_i^\ssup{n}}{a_n} \leq c_2^i  \, \forall i \in [r_n-1 ]\}} 
 \p_{(\tilde S_{r_n-1}, \tilde \tau_{r_n -1} )}^{\alpha_n^*} ( c_1^{r_n} \leq S_1^\ssup{n} \leq c_2^{r_n} , E_n) \Big] \\
&  \geq  \E_{(0,\tau)}^{\alpha_n^*} \Big[ \1{ \{ c_1^i \leq \frac{ \tilde S_i^\ssup{n}}{a_n} \leq c_2^i \,  \forall i \in [r_n-1 ]\}} 
 \p_{(\tilde S_{r_n-1}, \tilde \tau_{r_n -1} )}^{\alpha_n^*} ( c_1^{r_n} \leq S_1^\ssup{n} \leq c_2^{r_n} ) \Big]- \sup_{\tau} \p_{(0,\tau)}^{\alpha_n^*} (E_n)   
\end{aligned} \]
Iterating and using the bound~\eqref{eq:2311-1} gives
\begin{equation}\label{eq:2911-2}  \p_{(0,\tau)}^{\alpha_n^*} \Big( c_1^i \leq \frac{ \tilde S_i^\ssup{n}}{a_n} \leq c_2^i \forall i \in [r_n] \Big)
\geq \p_{(0,\tau)}^{\alpha_n^*} \Big( c_1^i \leq \frac{ \tilde S_i^\ssup{n}}{a_n} \leq c_2^i \forall i \in [r_n] \Big)
- \tilde C \, r_n \, e^{-\tilde \gamma N^{1/5}} , \end{equation}
where the error converges to $0$ by choice of $r_n$.
Then, combining~\eqref{eq:2911-1} and~\eqref{eq:2911-2} 
together with Lemma~\ref{lem:DonskCondCheck} we can deduce the 
statement of the lemma.
\end{proof}

\section{Proofs: Upper bound}\label{sec:upper_bound}

In this section, we fix the start type $\ell$ of the $\IBRW$. Recall that in the killed $\IBRW$ every particle $x$ with $\loc(x)>0$ is deleted together with its descendants. We denote its survival probability by $\zeta$, that is, for $s_0 \leq 0$,
\[
\zeta_{s_0}(p,f)=P_{(s_0,\ell)}^p(\text{killed $\IBRW$ survives}).
\]

\begin{lemma}\label{lem:pre_upper}
For all $\alpha \in \interval$, $s_0 \leq 0$, $n \in \N$ and $b_1,\dots, b_n \ge 0$, 
\[
\zeta_{s_0}(p,f) \le e^{-\alpha s_0}{ \rho^p}(\alpha)^n I(n)+\sum_{j=0}^{n-1} e^{-\alpha (b_{j+1}+s_0)}\rho^p(\alpha)^{j+1}I(j),
\]
where $I(j)=\P_{(s_0,\ell)}^{\alpha}(-b_i \le S_i \le 0 \;\forall \,i\in [j])$ for $j=0,\dots,n$,  where $((S_i, \tau_i)_{i \geq 0}, \p_{(s_0,\ell)})$ is the Markov chain from Lemma~\ref{lem:mto} started in $(s_0,\ell)$.
\end{lemma}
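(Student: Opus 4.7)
The plan is to bound $\zeta_{s_0}(p,f)$ by a first-moment argument applied to the particles alive in generation $n$, after decomposing their trajectories according to whether they stay in a shrinking strip $[-b_i,0]$ or first exit below the lower barrier. The many-to-one lemma (Lemma~\ref{lem:mto}) then converts each contribution to a probability involving the Markov chain $(S_i,\tau_i)$, and the eigenfunction inequality from Lemma~\ref{lem:SpecProp}(v) eliminates the ratio $v_\alpha(\ell)/v_\alpha(\tau_k)$.

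First, since survival of the killed IBRW implies the existence of a particle $x$ with $|x|=n$ whose entire ancestral line satisfies $\loc(x_i)\le 0$, we have
\[\zeta_{s_0}(p,f)\le E^p_{(s_0,\ell)}\Big[\#\{|x|=n:\loc(x_i)\le 0\ \forall i\in[n]\}\Big].\]
For any such particle, either (a) $-b_i\le \loc(x_i)\le 0$ for all $i\in[n]$, or (b) there exists a minimal index $j\in\{0,\dots,n-1\}$ with $\loc(x_{j+1})<-b_{j+1}$ and $\loc(x_i)\in[-b_i,0]$ for $i\in[j]$. In case~(b) the ancestor $x_{j+1}$ itself witnesses the event, so I will bound $\zeta$ by the expected number of case~(a) particles at generation $n$ plus, for each $j\in\{0,\dots,n-1\}$, the expected number of particles at generation $j+1$ satisfying the case~(b) condition for that $j$.

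Next I apply the many-to-one lemma in the form
\[E^p_{(s_0,\ell)}\Big[\sum_{|x|=k}F(\loc(x_\cdot),\type(x_\cdot))\Big]=\rho^p(\alpha)^k\,\E^\alpha_{(s_0,\ell)}\Big[F\,e^{\alpha(S_k-s_0)}\frac{v_\alpha(\ell)}{v_\alpha(\type_k)}\Big].\]
For case~(a) with $k=n$, on the event $\{S_i\in[-b_i,0]\ \forall i\in[n]\}$ I use $S_n\le 0$ to bound $e^{\alpha(S_n-s_0)}\le e^{-\alpha s_0}$, and Lemma~\ref{lem:SpecProp}(v) to bound $v_\alpha(\ell)/v_\alpha(\type_n)\le 1$, yielding the contribution $e^{-\alpha s_0}\rho^p(\alpha)^n I(n)$. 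For case~(b) with $k=j+1$, the event now forces $S_{j+1}<-b_{j+1}$, so $e^{\alpha(S_{j+1}-s_0)}\le e^{-\alpha(b_{j+1}+s_0)}$; together with $v_\alpha(\ell)/v_\alpha(\type_{j+1})\le 1$ and the relaxation of the constraint on step $j+1$, this yields the contribution $e^{-\alpha(b_{j+1}+s_0)}\rho^p(\alpha)^{j+1}I(j)$.

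Summing the case~(a) term and the $n$ case~(b) terms gives exactly the right-hand side of the claimed inequality. There is no real obstacle here: everything reduces to the trajectory decomposition together with the two pointwise bounds $e^{\alpha S_k}\le e^{-\alpha\,(\text{relevant lower barrier})}$ and $v_\alpha(\ell)\le v_\alpha(\type_k)$. The only technical care needed is the correct bookkeeping of the Radon--Nikodym weight in the many-to-one formula and the choice of the minimal exit index $j$ in case~(b), which ensures the trajectory constraints that recover $I(j)$.
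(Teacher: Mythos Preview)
Your proposal is correct and follows essentially the same route as the paper's own proof: a trajectory decomposition by the first exit time from the strip $[-b_i,0]$, followed by a first-moment (Markov) bound and the many-to-one lemma, with the eigenfunction ratio controlled via $v_\alpha(\ell)=\inf_{\tau}v_\alpha(\tau)$ from Lemma~\ref{lem:SpecProp}(v). The only cosmetic difference is that the paper works at the level of the \emph{probability} of existence and applies Markov's inequality term by term, whereas you pass to the expected count at the outset; the bookkeeping and the resulting bounds are identical.
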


\begin{proof} By definition,
\begin{align}
\zeta_{s_0}(p,f) &\le P_{(s_0,\ell)}^p(\exists |x|=n\colon \loc(x_i) \le 0 \; \forall\, i\in [n])\notag \\
&\le P_{(s_0,\ell)}^p(\exists |x|=n\colon -b_i \le \loc(x_i) \le 0 \; \forall\, i\in [n]) \label{eq:preUp1}\\
&\phantom{space more}+\sum_{j=1}^{n} P_{(s_0,\ell)}^p(\exists |x|=n\colon -b_i \le \loc(x_i) \le 0 \; \forall\, i\in [j-1], \loc(x_j) < -b_j).\notag 
\end{align}
For the first summand we use the Markov inequality and Lemma~\ref{lem:mto} to derive
\begin{align}
P_{(s_0,\ell)}^p(\exists |x|=n\colon& -b_i \le \loc(x_i) \le 0 \; \forall\, i\in [n])\notag \\
&=P_{(s_0,\ell)}^p\Big(\sum_{|x|=n} \mathbbm{1}\{-b_i \le \loc(x_i) \le 0 \; \forall \,i \in [n]\} \ge 1\Big)\notag  \\
&\le E_{(s_0,\ell)}^p\Big[\sum_{|x|=n} \mathbbm{1}\{-b_i \le \loc(x_i) \le 0 \; \forall\, i \in [n]\}\Big]\notag  \\
&=\rho^p(\alpha)^n \E_{(s_0,\ell)}^{\alpha}\Big[e^{\alpha (S_n-S_0)} \mathbbm{1}\{-b_i \le S_i \le 0 \; \forall\, i \in [n]\}\frac{\eigenfct_{\alpha}(\type_0)}{\eigenfct_{\alpha}(\type_n)}\Big]\notag \\
&\le\frac{\eigenfct_{\alpha}(\ell)}{\inf_{\type \in \typespace} \eigenfct_{\alpha}(\type)} \rho^p(\alpha)^n e^{-\alpha s_0} I(n) = \rho^p(\alpha)^n e^{-\alpha s_0} I(n), \label{eq:preUp2}
\end{align}
 where we used in the last step that by Lemma~\ref{lem:SpecProp} 
$\inf_{\type \in \typespace} \eigenfct^p_{\alpha}(\type) = \eigenfct_{\alpha}(\ell)$.
Similarly,
\begin{align}
P_{(s_0,\ell)}^p(\exists |x|=n\colon& -b_i \le \loc(x_i) \le 0 \; \forall\, i\in [j-1], \loc(x_j) < -b_j)\notag \\
&\le P_{(s_0,\ell)}^p(\exists |x|=j\colon -b_i \le \loc(x_i) \le 0 \; \forall\, i \in [j-1], \loc(x_j) < -b_j) \notag\\
&=P_{(s_0,\ell)}^p\Big(\sum_{|x|=j}\mathbbm{1}\big\{-b_i \le \loc(x_i) \le 0 \; \forall\, i \in [j-1], \loc(x_j) < -b_j\big\}\ge 1\Big)\notag \\
& \le E_{(s_0,\ell)}^p\Big[\sum_{|x|=j}\mathbbm{1}\big\{-b_i \le \loc(x_i) \le 0 \; \forall\, i \in [j-1], \loc(x_j) < -b_j\big\}\Big]\notag \\
& = \rho^p(\alpha)^j\E_{(s_0,\ell)}^{\alpha}\Big[e^{\alpha (S_j-S_0)} \mathbbm{1}\big\{-b_i \le S_i \le 0 \; \forall \,i \in [j-1], S_j < -b_j\big\}\frac{\eigenfct_{\alpha}(\type_0)}{\eigenfct_{\alpha}(\type_j)}\Big]\notag \\
& \le 
 \rho^p(\alpha)^j e^{\alpha (-b_j-s_0)} I(j-1).\label{eq:preUp3}
\end{align}
Combining \eqref{eq:preUp1}--\eqref{eq:preUp3}, now concludes the proof.
\end{proof}

\begin{lemma}\label{lem:upper_estim}
For all $\alpha \in \interval$ with $\rho(\alpha) \ge 1$ and for all $s_0 \leq 0, k,N \in \N$ and $(b_j \colon j \in [kN])$ nonnegative and decreasing,
\[
\zeta_{s_0}(p,f) \le e^{-\alpha s_0}\rho(\alpha)^{kN} I(kN) +k \sum_{l=0}^{N-1} e^{-\alpha (b_{(l+1)k} +s_0)} \rho(\alpha)^{(l+1)k} I(lk).
\]
\end{lemma}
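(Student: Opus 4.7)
The plan is to derive Lemma~\ref{lem:upper_estim} as a direct consequence of Lemma~\ref{lem:pre_upper} by \emph{bucketing} the sum over $j \in \{0,1,\dots,kN-1\}$ into $N$ consecutive blocks of length $k$ and estimating each block by its worst term.

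First, I apply Lemma~\ref{lem:pre_upper} with $n=kN$ and the given nonnegative sequence $(b_j)_{j\in[kN]}$, which yields
\[
\zeta_{s_0}(p,f) \le e^{-\alpha s_0}\rho^p(\alpha)^{kN} I(kN) + \sum_{j=0}^{kN-1} e^{-\alpha (b_{j+1}+s_0)} \rho^p(\alpha)^{j+1} I(j).
\]
Since $\rho^p(\alpha) = p\,\rho(\alpha) \le \rho(\alpha)$ by~\eqref{eq:ApvsA} and the hypothesis $p\le 1$, each factor $\rho^p(\alpha)^m$ can be replaced by $\rho(\alpha)^m$.

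Second, I partition $\{0,1,\dots,kN-1\}$ into blocks $B_l := \{lk, lk+1,\dots,(l+1)k-1\}$ for $l=0,\dots,N-1$, and bound every term with $j\in B_l$ by the term corresponding to $j=(l+1)k-1$. This is where three monotonicity properties enter. For $j\in B_l$ one has $j+1\le(l+1)k$, so:
\begin{itemize}
\item $b_{j+1}\ge b_{(l+1)k}$ since $(b_j)$ is decreasing, hence $e^{-\alpha b_{j+1}} \le e^{-\alpha b_{(l+1)k}}$ (using $\alpha>0$, which holds since $\alpha\in\interval\subset(0,1)$);
\item $\rho(\alpha)^{j+1}\le\rho(\alpha)^{(l+1)k}$ since $\rho(\alpha)\ge 1$ by hypothesis;
\item $I(j)\le I(lk)$ since $I(\cdot)$ is non-increasing in its argument (the defining event is an intersection over a larger index set).
\end{itemize}
Consequently, for each block,
\[
\sum_{j=lk}^{(l+1)k-1} e^{-\alpha (b_{j+1}+s_0)} \rho(\alpha)^{j+1} I(j) \le k\, e^{-\alpha(b_{(l+1)k}+s_0)}\rho(\alpha)^{(l+1)k} I(lk).
\]
Summing over $l=0,\dots,N-1$ and combining with the surviving first term $e^{-\alpha s_0}\rho(\alpha)^{kN}I(kN)$ yields the claim.

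I do not foresee a real obstacle: the argument is entirely combinatorial once Lemma~\ref{lem:pre_upper} is in place. The only point to double-check is that all three monotonicity properties hold simultaneously, in particular the monotonicity $I(j)\le I(lk)$ for $j\ge lk$, which is immediate from the definition, and the fact that $\alpha>0$, which is ensured by $\alpha\in\interval=(\gamma,1-\gamma)$ with $\gamma\ge 0$ guaranteeing $-\alpha$ makes $e^{-\alpha b_{j+1}}$ decreasing in $b_{j+1}$.
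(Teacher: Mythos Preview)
Your proof is correct and follows essentially the same approach as the paper: apply Lemma~\ref{lem:pre_upper} with $n=kN$, split the resulting sum into $N$ blocks of length $k$, and bound each block using the monotonicity of $I(\cdot)$, of $j\mapsto e^{-\alpha b_j}$, and the hypothesis $\rho(\alpha)\ge 1$. Your explicit justification that $\rho^p(\alpha)\le\rho(\alpha)$ (allowing the replacement of $\rho^p$ by $\rho$) is a detail the paper glosses over, so your write-up is, if anything, slightly more careful.
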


\begin{proof}
Using Lemma~\ref{lem:pre_upper}, that $j \mapsto I(j)$ is decreasing, $\rho(\alpha)\ge 1$ and that $j \mapsto e^{-\alpha b_j}$ is increasing to obtain
\begin{align*}
\zeta(p,f)- e^{-\alpha s_0}\rho(\alpha)^{kN} I(kN)  &\le \sum_{j=0}^{kN-1} e^{-\alpha (b_{j+1}+s_0)}\rho(\alpha)^{j+1}I(j)\\
&=\sum_{l=0}^{N-1} \sum_{j=lk}^{(l+1)k-1} e^{-\alpha (b_{j+1}+s_0)}\rho(\alpha)^{j+1}I(j)\\
&\le k \sum_{l=0}^{N-1} e^{-\alpha (b_{(l+1)k} +s_0)} \rho(\alpha)^{(l+1)k} I(lk).\qedhere
\end{align*}
\end{proof}

For the proof of Theorem~\ref{thm:percol}, let $(p_n)_{n \in \N}$ be a sequence of retention probabilities with $p_n \downarrow \pc$. For Theorem~\ref{thm:vary_f}, let $(t_n)_{n \in \N}$ be a sequence of parameters with $t_n \uparrow \infty$. We write,
\[
\rho_n(\cdot)= \begin{cases}
\rho^{p_n}(\cdot) = p_n \rho(\cdot) & \text{ for Theorem~\ref{thm:percol}}\\
\rho_{t_n}(\cdot) & \text{ for Theorem~\ref{thm:vary_f}}\end{cases} \quad \text{and} \quad 
\alpha_n^*:=\begin{cases}
\alpha^* & \text{ for Theorem~\ref{thm:percol}}\\
\alpha_{t_n}^* & \text{ for Theorem~\ref{thm:vary_f}.}\end{cases}
\]
Moreover, we denote by $\eigenfct_n$ the eigenfunction for $\rho_n(\alpha_n^*)$ from Lemma~\ref{lem:SpecProp}. 
The Markov chain from Lemma~\ref{lem:mto} corresponding to $\alpha=\alpha_n^*$ and retention parameter $p_n$ or attachment rule $f_{t_n}$ is denoted by $((S_i^\ssup{n},\type_i^\ssup{n})\colon i \in \N;\P)$, i.e.\ $\P=\P_{(s_0,\ell)}^{\alpha_n^*}$. One easily checks that in the setup of Theorem~\ref{thm:percol} the distribution of the Markov chain does not depend on $n$.

Finally, we introduce for all $n \in \N$,
\[
\eps_n:=\log \rho_n(\alpha_n^*) \quad \text{and} 
\quad \sigma^2:=\lim_{n \to \infty} \frac{\rho_n''(\alpha_n^*)}{\rho_n(\alpha_n^*)}.
\]
Notice, that in the situation of Theorem~\ref{thm:percol}, $\sigma^2=\rho''(\alpha^*)\pc$. The choice $\alpha=\alpha_n^*$ guarantees that for both theorems, $\eps_n\downarrow 0$ as $n \to \infty$.

\begin{lemma}\label{lem:upper_conv}
Let $N\in \N, s_0\leq 0$ and $\stepconst,\bdconst >0$. Let $k_n=\lfloor (\stepconst/\eps_n)^{\frac{3}{2}} N^{-1}\rfloor$ and $b_i^n(s_0)=a^{-1/2} \bdconst (Nk_n-i)^{1/3}-s_0$, $i \in [Nk_n]$. Then, for all $C>0$, $l \in [N]$,
\[
\limsup_{n \to \infty} \eps_n^{\frac{1}{2}} \log \Big( \sup_{- C/\sqrt{\eps_n} \leq s_0 \leq 0}  I_n^{s_0} (lk_n) \Big) 
 \le - \frac{\pi^2 \sigma^2}{2} \frac{l}{N} \frac{a^{3/2}}{(b + C )^2} . \]
where $I_n^s(j)=\P_{(s,\ell)}(-b_i^n(s) \le S_i^\ssup{n} \le 0\, \forall\, i\in [j])$.
\end{lemma}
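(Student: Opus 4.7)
The plan is to apply Mogulskii's theorem (Theorem~\ref{thm:mogulskii}) to the Markov chain $(S_i^\ssup{n}, \type_i^\ssup{n})$ from Lemma~\ref{lem:mto}, after first reducing to the extremal starting point and rescaling time and space appropriately.

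\textbf{Reduction to $s_0 = -C/\sqrt{\eps_n}$.} The transition kernel of $(S_i^\ssup{n}, \type_i^\ssup{n})$ under $\P_{(s_0,\ell)}^{\alpha_n^*}$ depends on $\type_i^\ssup{n}$ but not on $S_i^\ssup{n}$, so the shifted process $(S_i^\ssup{n} - s_0)_{i \geq 0}$ has the same law under every $\P_{(s_0,\ell)}^{\alpha_n^*}$. Substituting $b_i^n(s_0) = a^{-1/2} b (Nk_n - i)^{1/3} - s_0$ shows that, in terms of the chain started at $0$, the event defining $I_n^{s_0}(lk_n)$ reads
\[
\bigcap_{i \in [lk_n]} \bigl\{ -a^{-1/2} b (Nk_n - i)^{1/3} \leq S_i^\ssup{n} \leq -s_0 \bigr\}.
\]
The lower boundary does not depend on $s_0$ and the upper boundary $-s_0$ is monotone increasing in $-s_0$, so the supremum over $s_0 \in [-C/\sqrt{\eps_n}, 0]$ is attained at $s_0 = -C/\sqrt{\eps_n}$, reducing the task to bounding a single probability.

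\textbf{Setup for Mogulskii.} I will set $a_n := \eps_n^{-1/2}$ and take the Mogulskii chain length to be $K_n := lk_n$. Then $a_n \to \infty$ and $a_n^2/K_n \sim N\sqrt{\eps_n}/(la^{3/2}) \to 0$, as required. Since $k_n \sim (a/\eps_n)^{3/2}/N$, we have $k_n^{1/3}/a_n \to a^{1/2}/N^{1/3}$, and writing $i = t K_n$ with $t \in [0,1]$ shows that the rescaled discrete lower envelope $a^{-1/2} b (Nk_n - i)^{1/3}/a_n$ converges uniformly in $t$ to $b N^{-1/3}(N-tl)^{1/3}$. Consequently, for any $\delta > 0$ and all large $n$, the event from Step~1 is contained in
\[
E_n^\delta := \bigl\{ g_1^\delta(i/K_n) \leq S_i^\ssup{n}/a_n \leq g_2(i/K_n) \;\; \forall\, i \in [K_n] \bigr\},
\]
with the continuous envelopes
\[
g_1^\delta(t) := -(1+\delta)\,\frac{b}{N^{1/3}}(N-tl)^{1/3}, \qquad g_2(t) := C,
\]
satisfying $g_1^\delta(0) = -(1+\delta) b \leq 0 \leq C = g_2(0)$. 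Hypothesis~(iii) of Theorem~\ref{thm:mogulskii} is provided uniformly in the starting type by Lemma~\ref{lem:DonskCondCheck}, with limiting variance $\sigma^2$.

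\textbf{Integral bound and conclusion.} Theorem~\ref{thm:mogulskii} yields
\[
\lim_{n \to \infty} \frac{a_n^2}{K_n} \log \P(E_n^\delta) = -\frac{\pi^2 \sigma^2}{2} \int_0^1 \frac{dt}{(g_2(t) - g_1^\delta(t))^2}.
\]
Since $tl \in [0, N]$ on $[0,1]$ we have $(N-tl)^{1/3}/N^{1/3} \leq 1$, and hence $g_2(t) - g_1^\delta(t) \leq C + (1+\delta) b$ for all $t$. Therefore the integral is at least $(C + (1+\delta) b)^{-2}$. Multiplying the displayed limit by $K_n \sqrt{\eps_n}/a_n^2 \to la^{3/2}/N$ and letting $\delta \downarrow 0$ gives the claimed bound. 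The only non-routine part of the argument is the uniform Donsker-type invariance for the Markov chain, which has already been handled in Lemma~\ref{lem:DonskCondCheck}; what remains is bookkeeping---the sandwiching of the discrete cubic-root envelope between two continuous ones, together with the elementary observation that the envelope width $g_2 - g_1$ is maximized (equal to $b+C$) at $t=0$.
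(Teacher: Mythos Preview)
Your argument is correct and follows essentially the same route as the paper: shift the chain to start at $0$, observe that the resulting event is monotone in $-s_0$ so the supremum is taken at $s_0=-C/\sqrt{\eps_n}$, sandwich the discrete cubic-root lower envelope by a fixed continuous $g_1$, invoke Mogulskii via Lemma~\ref{lem:DonskCondCheck}, and bound the width $g_2-g_1$ by the constant $b+C$. The only cosmetic difference is that you normalise space by $a_n=\eps_n^{-1/2}$ whereas the paper uses $a_n=(lk_n)^{1/3}$; since $(lk_n)^{1/3}\eps_n^{1/2}\to (la^{3/2}/N)^{1/3}$ these differ by an asymptotic constant, which just rescales $g_1,g_2$ and yields the same integral bound after the final multiplication.
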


Notice that the specific choice of parameters implies that
\begin{equation}\label{Ilk_eq}
I_n^{s_0}(lk_n)=\P_{(0,\ell)}^{\alpha_n^*}\Big(-a^{-1/2}\bdconst \Big(\frac{N}{l}-\frac{i}{lk_n}\Big)^{\frac{1}{3}} \le \frac{S_i^\ssup{n}}{(lk_n)^{1/3}} \le -\frac{s_0}{(lk_n)^{1/3}} \; \forall \,i\in [lk_n]\Big).
\end{equation}

\begin{proof}
Denote $g_1(t)=-a^{-1/2}\bdconst(\frac{N}{l}-t)^{\frac{1}{3}}$ and let $\delta >0$. For $n$ sufficiently large, \eqref{Ilk_eq} yields
\[
\sup_{- C /\sqrt{\eps_n} \leq s_0 \leq 0 }I_n^{s_0} (lk_n) \le \P_{(0,\ell)}^{\alpha_n^*}\Big(g_1\Big(\frac{i}{lk_n}\Big) \le \frac{S_i^\ssup{n}}{(lk_n)^{1/3}} \le \Big(\frac{l}{N}\Big)^{1/3} a^{-1/2} (C + \delta) \; \forall\, i\in [lk_n]\Big).
\]
By Lemma~\ref{lem:DonskCondCheck} we can apply Theorem~\ref{thm:mogulskii} with $a_n=(lk_n)^{1/3}$, $n=lk_n$ and $g_2(t)=a^{-1/2} (C+\delta) \frac{l}{N}$, and then take $\delta \searrow 0$ to derive
\begin{align*}
\limsup_{n \to \infty} \frac{1}{(lk_n)^{\frac{1}{3}}} \log \sup_{-C/\sqrt{\eps_n} \leq s_0 \leq 0 } I_n^{s_0}(lk_n) & \le - \frac{\pi^2 \sigma^2}{2} \int_0^1 \frac{1}{(a^{-1/2}\bdconst (\frac{N}{l}-t)^{\frac{1}{3}} + C (N/\ell)^{1/3}a^{-1/2})^2}\, dt\\
& \leq - \frac{\pi^2 \sigma^2}{2} \Big(\frac{l}{N}\Big)^{2/3} \frac{a}{(b + C )^2} . 
\end{align*}
Moreover, as $n \to \infty$,
\[
\eps_n^{\frac{1}{2}} (lk_n)^{\frac{1}{3}} = l^{\frac{1}{3}} \Big(\eps_n^{\frac{3}{2}} \lfloor (\stepconst/\eps_n)^{\frac{3}{2}} N^{-1}\rfloor\Big)^{\frac{1}{3}} \to l^{\frac{1}{3}} \frac{\stepconst^{\frac{1}{2}}}{N^{\frac{1}{3}}}.\qedhere
\]
\end{proof}

\begin{proof}[Proof of the upper bounds in Theorems~\ref{thm:percol} and \ref{thm:vary_f}] Since $\size(p,f)$ is non-decreasing in retention probability $p$ and attachment rule $f$, it suffices to consider the asymptotic behaviour along a discrete subsequence. 
As before, for Theorem~\ref{thm:percol} we take any discrete sequence of retention probabilities $p_n \downarrow \pc$ and for Theorem~\ref{thm:vary_f} we take any discrete sequence $t_n \uparrow \infty$. 
We make use of the notation introduced in and before Lemma~\ref{lem:upper_conv}. In particular, we fix $N \in \N, \stepconst>0$ and $\bdconst>0$ and let $k_n :=\lfloor (\stepconst/\eps_n)^{\frac{3}{2}} N^{-1}\rfloor$ and $b_i=b_i (s_0) = b_i^n=a^{-1/2}\bdconst(Nk_n-i)^{\frac{1}{3}}-s_0$. Write 
\[
\zeta^n_{s_0}=\begin{cases}
\zeta_{(s_0,\ell)}(p_n,f) &\text{for Theorem~\ref{thm:percol}},\\
\zeta_{(s_0,\ell)}(1,f_{t_n}) &\text{for Theorem~\ref{thm:vary_f}.}
\end{cases}
\]
 Applying Lemma~\ref{lem:upper_estim} with $\alpha=\alpha_n^*$, we obtain for every $N \in \N$, $\stepconst>0$, $\bdconst>0$, $C>0$
\begin{align*}
\log\Big( & \sup_{-C/\sqrt{\eps_n} \leq s_0 \leq 0} \zeta^n_{s_0}\Big)   - \log(N+1)\\&\le \max_{l\in [N-1]}\Big\{ \alpha_n^*  C \eps_n^{-1/2}+ k_nN \log \rho_n(\alpha_n^*) +\log \sup_{- C/\sqrt{\eps_n} \leq s_0 \leq 0}  I_n^{s_0}(k_nN),\\
&\hspace{2cm}\log k_n - \alpha_n^* (b_{k_n}^n+s_0)+ k_n \log \rho_n(\alpha_n^*) ,\\
&\hspace{2cm} \log k_n -\alpha_n^* (b_{(l+1)k_n}^n +s_0)+(l+1)k_n \log \rho_n(\alpha_n^*)+\log \sup_{- C/\sqrt{\eps_n} \leq s_0 \leq 0}   I_n^{s_0}(lk_n)\Big\}\\
&=\max_{l\in [N-1]}\Big\{\alpha_n^*  C \eps_n^{-1/2}+ k_nN \eps_n +\log\sup_{- C/\sqrt{\eps_n} \leq s_0 \leq 0}   I_n^{s_0}(k_nN) ,\\
&\hspace{2cm} \log k_n  - \alpha_n^* a^{-1/2} \bdconst (Nk_n-k_n)^{\frac{1}{3}} +k_n\eps_n,\\
&\hspace{2cm}  \log k_n -\alpha_n^* a^{-1/2}\bdconst (Nk_n - (l+1)k_n)^{\frac{1}{3}}+(l+1)k_n \eps_n +\log
\sup_{- C/\sqrt{\eps_n} \leq s_0 \leq 0} I_n^{s_0}(lk_n)\Big\}.
\end{align*}
Recall that the choice of parameters implies that
\[
\eps_n^{\frac{3}{2}}k_n \to \frac{\stepconst^{\frac{3}{2}}}{N}, \quad \text{and } \quad \sqrt{\eps_n} \log k_n  \to 0\qquad \text{as }n\to \infty.
\]
Hence, 
by Lemma~\ref{lem:upper_conv},
\begin{align*}
\limsup_{n \to \infty}\, \eps_n^{\frac{1}{2}}\log & \Big(\sup_{-C/\sqrt{\eps_n} \leq s_0 \leq 0} \zeta^n_{s_0} \Big)\\&\le \max_{l\in [N-1]}\Big\{ C \alpha^* + \stepconst^{\frac{3}{2}}-\stepconst^{\frac{3}{2}} \frac{ \pi^2\sigma^2}{2(b+C)^2},0 - \alpha^* \bdconst \frac{1}{N^{\frac{1}{3}}} (N-1)^{\frac{1}{3}} +\frac{\stepconst^{\frac{3}{2}}}{N},\\
& \phantom{\le \max_{l\in [N-1]}\Big\{}  0 -\alpha^* \bdconst \frac{1}{N^{\frac{1}{3}}} \big(N - (l+1)\big)^{\frac{1}{3}}+(l+1) \frac{\stepconst^{\frac{3}{2}}}{N} -\stepconst^{\frac{3}{2}} \frac{ \pi^2 \sigma^2}{2} \frac{1}{(b +C)^2} \Big( \frac{l}{N}\Big).
\end{align*}
Taking $N$ to infinity, we deduce 
\begin{align*}
\limsup_{n \to \infty} \eps_n^{\frac{1}{2}} &\log(\sup_{-C /\sqrt{\eps_n} \leq s \leq 0} \zeta^n_s) \le \max\Big\{C \alpha^* + \stepconst^{\frac{3}{2}}\big(1 -\frac{ \pi^2\sigma^2}{2} \frac{1}{(b + C)^2}\big) ,- \alpha^* \bdconst ,\\
&\phantom{\max\Big\{}\sup_{x \in (0,1]} \Big\{-\alpha^* \bdconst (1 - x)^{\frac{1}{3}}+\stepconst^{\frac{3}{2}} \Big( 1 -\frac{ \pi^2\sigma^2}{2} \frac{1}{(b + C)^2}\Big)  x\Big\} \\
&  = \max\Big\{C \alpha^* + \stepconst^{\frac{3}{2}}\big(1 -\frac{ \pi^2\sigma^2}{2} \frac{1}{(b + C)^2}\big) ,
\sup_{x \in [0,1]} \Big\{-\alpha^* \bdconst (1 - x)^{\frac{1}{3}}+\stepconst^{\frac{3}{2}}  \Big( 1 -\frac{ \pi^2\sigma^2}{2} \frac{1}{(b + C)^2}\Big) x \Big\}
\end{align*}
Now, we consider any $C < \sqrt{\frac{\pi^2 \sigma^2}{2}}$ and we choose 
$b$ such that
\[ b < \sqrt{\frac{\pi^2 \sigma^2}{2}} - C . \]
For these choices, we can take $a$ sufficiently large to see that the first term in the
above maximisation can be ignored, while in the second one the supremum 
is achieved at $x = 0$. This gives the bound 
\[ \limsup_{n \to \infty} \eps_n^{\frac{1}{2}} \log\Big(\sup_{-C /\sqrt{\eps_n} \leq s_0 \leq 0} \zeta^n_{s_0}\Big)
\leq - \alpha^* b. \]
Now, we can let $b \uparrow \sqrt{\frac{\pi^2 \sigma^2}{2}} - C$ to see that for any $0<C< \sqrt{\frac{\pi^2\sigma^2}{2}}$,
\begin{equation}\label{eq:0112-1} \limsup_{n \to \infty} \eps_n^{\frac{1}{2}} \log\Big(\sup_{-C /\sqrt{\eps_n} \leq s_0 \leq 0} \zeta^n_{s_0}\Big)
\leq - \alpha^* \Big(\sqrt{{\frac{\pi^2 \sigma^2}{2}}} - C\Big) . \end{equation}

To complete the proof take $\delta = \frac{1}{2} (1 - \alpha^*) > 0$. Choose
$\ell = \lfloor \alpha^* \frac{\pi \sigma}{\sqrt{2}\delta} \rfloor$
and note that  $\ell \delta \leq \alpha^* \frac{\pi \sigma}{\sqrt{2}} < \frac{\pi \sigma}{\sqrt{2}}$.
Then, we obtain from~\eqref{eq:0112-1} using the monotonicity of $s \mapsto \zeta_s^n$, 
\[ \begin{aligned}  \int_0^\infty e^{-s} \zeta_{-s}^n \, ds 
& \leq \sum_{k=0}^{\ell -1} \int_{k/\sqrt{\eps_n}}^{(k+1)/\sqrt{\eps_n}}
e^{-s} \zeta_s^n \, ds + e^{- \ell \delta  /\sqrt{\eps_n}} \\
& \leq \sum_{k=0}^{\ell-1}
e^{1/\sqrt{\eps_n} ( - \delta k - \alpha^* \frac{\pi \sigma}{\sqrt{2}}  + \alpha^* (k+1)\delta )(1+o(1))}  + e^{- ( \alpha^* \frac{\pi \sigma}{2}  \alpha^* - \delta) / \sqrt{\eps_n}} \\
& \leq e^{- 1/\sqrt{\eps_n} (\alpha^* \frac{\pi \sigma}{\sqrt{2}} - 2\delta) (1 + o(1))}  ,
\end{aligned} 
\]
where we used that $\alpha^* < 1$. Thus, 
we can deduce  by first taking the limit $n\ra\infty$ and then $\delta \downarrow 0$ that
\[ \limsup_{n \ra \infty} \sqrt{\eps_n} \log \int_0^\infty e^{-s} \zeta_{-s}^n \, ds  \leq - \alpha^* \sqrt{ \frac{\pi^2 \sigma^2}{2}}.\]
This immediately implies Theorem~\ref{thm:vary_f}  since $\sigma^2=\rho''(\alpha^*)$.
For Theorem~\ref{thm:percol}, $\pc=1/\rho(\alpha^*)$ implies
\[
\eps_n=\log(p_n\rho (\alpha^*))=\log(p_n/\pc)=\log\Big(1+\frac{p_n-\pc}{\pc}\Big)=\frac{p_n-\pc}{\pc}(1+o(1)).
\]
so that Theorem~\ref{thm:percol} follows since $\sigma^2=\rho''(\alpha^*)\pc$.
\end{proof}

\section{Proofs: Lower bound}\label{sec:lower_bound}

The general strategy of the lower bound is to identify a subtree of the IBRW that has the same distribution as a Galton-Watson tree. For this carefully chosen subtree we then lower bound the survival probability, which in return gives  the required lower bound on the survival probability of the IBRW. 
In Section~\ref{ssn:GW} we collect some general facts about Galton-Watson trees, which we will then use in Section~\ref{ssn:lower_bound} to carry out the proof of the lower bound.

\subsection{Galton-Watson lemmas}\label{ssn:GW}

In order to show the lower bound we construct a Galton-Watson tree, whose
particles are a subfamily of the killed branching random walk.
To estimate the survival probability of this Galton-Watson tree, we will use the 
following general lemma due to~\cite{GHS11} and we also recall the proof for completeness.

\begin{lemma}\label{le:GHS-GW}
Let ${\rm GW}$ be a Galton-Watson tree and denote by $X$ the number of children in the 
first generation and by $q$ its extinction probability. Then, for all $r \leq \min\{ \frac{1}{8}, q\}$, 
\begin{equation*} \p ( \rm {GW } \neq \emptyset ) \geq \p ( X \neq 0 ) - 2 r^{-2} \p ( 1 \leq X \leq r^{-2} ) - 2 r . \end{equation*}
\end{lemma}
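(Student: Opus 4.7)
The plan is to recast the claim as the upper bound
\[
q \leq \P(X=0) + 2r^{-2}\P(1 \leq X \leq r^{-2}) + 2r
\]
(using $\P(\mathrm{GW} \neq \emptyset) = 1-q$) and derive it from the extinction fixed-point identity $q = f(q)$, where $f(s) = \E[s^X]$ is the offspring generating function. The strategy is to split the series
\[
q - \P(X=0) = \sum_{k \geq 1} \P(X=k)\, q^k
\]
at $K = \lfloor r^{-2}\rfloor$ and treat the two resulting pieces separately.

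For the ``bulk'' $1 \leq k \leq K$ I would use the trivial estimate $q^k \leq 1$ together with $1 \leq 2r^{-2}$ (valid since $r \leq 1/8$) to obtain
\[
\sum_{k=1}^{K}\P(X=k)\, q^k \leq \P(1 \leq X \leq r^{-2}) \leq 2r^{-2}\,\P(1 \leq X \leq r^{-2}).
\]

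The substantive step is the ``tail'' $k > K$, and it relies on the classical inequality $f'(q) \leq 1$: in the subcritical/critical case $q=1$ and $f'(1) = \E X \leq 1$, while in the supercritical case convexity of $f$ together with $f(q) = q$ and $f(1) = 1$ forces $f(s) < s$ strictly on $(q,1)$, hence $f'(q) \leq 1$. With this in hand, factoring out one power of $q$ and exploiting $k \geq K+1$ gives
\[
\sum_{k > K}\P(X=k)\, q^k = q\sum_{k \geq K+1}\P(X=k)\, q^{k-1} \leq \frac{q}{K+1}\sum_{k \geq K+1} k\,\P(X=k)\, q^{k-1} \leq \frac{f'(q)}{K+1} \leq r^{2} \leq 2r,
\]
where the final step uses $K+1 > r^{-2}$ and $r \leq 1/8 < 2$.

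Adding the two estimates to $\P(X=0)$ yields the desired inequality. The main obstacle is recognising that $f'(q) \leq 1$ is exactly the right input for the tail: the naive bound $\sum_{k > K}\P(X=k)q^k \leq q^{K+1}$ is useless when $q$ is close to $1$, which is precisely the regime the lemma must cover.
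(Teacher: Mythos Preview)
Your proof is correct and takes a genuinely different, in fact more elementary, route than the paper. The paper writes $q=g(0)+\int_0^q g'(s)\,ds$, splits the integral at $q-r$, uses $g'(s)\le 1$ on $[0,q]$ to get $\int_0^{q-r}g'\le g'(1-r)$ and $\int_{q-r}^q g'\le r$, and then controls $g'(1-r)=\E[X(1-r)^{X-1}]$ by bounding $(1-r)^X\le e^{-rX}$, splitting at $X=r^{-2}$, and using that $u\mapsto ue^{-ru}$ is decreasing on $[r^{-1},\infty)$. You instead split the series $\sum_{k\ge 1}\P(X=k)q^k$ directly at $K=\lfloor r^{-2}\rfloor$ and handle the tail by the single observation $q^{k-1}\le \tfrac{k}{K+1}q^{k-1}$, which turns the tail into $\tfrac{q}{K+1}f'(q)\le r^2$. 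Both arguments ultimately rest on $f'(q)\le 1$, but yours avoids the integral representation and the exponential estimate entirely, yields the slightly sharper intermediate bound $q\le \P(X=0)+\P(1\le X\le r^{-2})+r^2$, and does not even use the hypothesis $r\le q$ (which the paper needs so that $q-r\ge 0$). The paper's approach has the mild advantage of making transparent where the extra factor $r$ arises (as the length of the interval $[q-r,q]$), but your argument is cleaner for the purpose at hand.
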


\begin{proof} 
Denote by $q$ the extinction probability of ${\rm GW}$ and by $s\mapsto \momGen(s)=\E[s^{X}]$ the generating function of ${\rm GW}$.
Then, for every $r \in [0,q]$,
\begin{align}
q=\momGen(q)&= \momGen(0)+\int_0^{q} \momGen'(s) \, ds
\notag\\
&= \momGen(0)+\int_0^{q-r} \momGen'(s) \, ds +\int_{q-r}^{q} \momGen'(s) \, ds
\notag\\
&\le \P(X=0)+\momGen'(1-r) +r,\label{extinctProbGWbasicBd}
\end{align}
where we used that $\momGen'(s)$ is increasing, and $\momGen'(s)\le 1$ for all $s\in [0,\extinctProbGW]$.
We continue by estimating
\begin{align}
\momGen'(1-r)=\E\big[X (1-r)^{X-1}\big] &= \frac{1}{1-r} E\big[X(1-r)^{X}\big]
\notag\\
&\le  \frac{1}{1-r} \E\big[X e^{-r X}\big]
\notag \\
&= \frac{1}{1-r} \Big(\E\big[X e^{-r X}; X\le r^{-2}\big]+\E\big[X e^{-r X}; X> r^{-2}\big]\Big)
\notag\\
&\le  \frac{1}{1-r} \Big(r^{-2}\P(1\le X\le r^{-2})+r^{-2} e^{-1/r}\Big),\label{momGenDerivBd1}
\end{align}
where we used for the second summand that $u\mapsto u e^{-ru}$ is decreasing on $[r^{-1},\infty)$ and $[r^{-2}, \infty) \subseteq [r^{-1},\infty)$.
Then, for all $r \in (0,\frac{1}{8}]$,
\begin{equation}\label{eq:basicBounds}
 \frac{1}{1-r} \le 2 \qquad \text{and} \qquad \frac{1}{1-r} r^{-2} e^{-1/r} \le r. 
\end{equation}
Combining \eqref{momGenDerivBd1} and \eqref{eq:basicBounds}, we deduce
\begin{equation}\label{momGenDerivBd2}
\momGen'(1-r) \le  2r^{-2}\P(1\le X\le r^{-2})+r
\end{equation}
Rearranging \eqref{extinctProbGWbasicBd} and \eqref{momGenDerivBd2}, we conclude that for all $r \leq \min\{ \frac{1}{8}, q\}$,
\begin{equation}\label{StartingPoint}
1-q \ge \P(\offDistr \neq 0)- 2r^{-2} \P(1\le \offDistr\le r^{-2}) -2r, 
\end{equation}
as required.
\end{proof}

We will also need an estimate which guarantees that a supercritical Galton-Watson process grows exponentially fast on survival with large probability.

\begin{lemma}\label{le:exp_growth}
For all $\theta_1>1>\theta_2>0$ there exists $\delta>0$ such that for any Galton-Watson process 
$(X_n\colon n\in\N)$ with $X_0=1$ where mean offspring~$m$, offspring distribution $(p_k)_{k \in \N_0}$, and extinction probability $q$ satisfy 
$q, p_1<\delta$ and $m>1/\delta$, 
we have, for sufficiently large~$n$,
$$\P( X_n \leq \theta_1^n \, | \, X_n \geq 1 )\leq \theta_2^n.$$
\end{lemma}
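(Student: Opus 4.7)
The plan is to reduce to the Galton--Watson subtree of immortal individuals and then apply a Chernoff large-deviation estimate to its generation sizes. Let $\tilde T$ denote the subtree consisting of all vertices with an infinite line of descent. A classical decomposition shows that, conditionally on survival, $\tilde T$ is itself a Galton--Watson tree whose offspring probability generating function is
\[
\tilde f(s)\;=\;\frac{f(q+(1-q)s)-q}{1-q},
\]
so it has the same mean $m$, extinction probability $0$, and $\tilde p_1=f'(q)=:\gamma$. Writing $Z_n$ for its $n$-th generation size, $Z_n\geq 1$ on survival and $X_n\geq Z_n$ on that event; the complementary event $\{X_n\geq 1\}\cap\{\text{extinction}\}$ has probability $q-f^{(n)}(0)$, which decays geometrically at rate $\gamma$ and is $o(\theta_2^n)$ when $\delta$ is small. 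It therefore suffices to show $\P(Z_n\leq\theta_1^n)\leq\theta_2^n$ for large $n$.

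Under the standing hypotheses, $\gamma$ is small: writing $f'(q)=p_1+\sum_{k\geq 2}kp_kq^{k-1}$ and using the elementary bound $kq^{k-2}\leq 2$ (valid for $k\geq 2$ and $q\leq 1/2$), one obtains $\gamma\leq p_1+2q\leq 3\delta$. So by choosing $\delta$ small we can make $\gamma$ arbitrarily small while $m\geq 1/\delta$ is simultaneously arbitrarily large.

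The core of the argument is the Chernoff inequality $\P(Z_n\leq M)\leq s^{-M}\tilde f^{(n)}(s)$ for $s\in(0,1)$, applied at $M=\theta_1^n$ and $s=s_n=1-\theta_1^{-n}$, so that $s_n^{-\theta_1^n}\to e$. I would analyse the iterates $u_k=\tilde f^{(k)}(s_n)$ in two regimes. Near $u=1$, convexity of $\tilde f$ together with $\tilde f'(1)=m$ gives $1-\tilde f(1-v)\leq mv$, forcing the iterates to leave any fixed neighbourhood of $1$ within about $k_0\approx n\log\theta_1/\log m$ steps. Near $u=0$, the bound $\tilde f(u)\leq\gamma u+u^2$ (from $\sum_{k\geq 2}\tilde p_k\leq 1$) yields $\tilde f(u)\leq(\gamma+\eta)u$ as soon as $u\leq\eta$, so thereafter the iterates contract geometrically at rate $\gamma+\eta$. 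The intermediate region is crossed in $O(\log(1/\eta))$ steps, using the quantitative bound $\tilde f(u)\leq u(1-(1-\gamma)(1-u))$. Patching the three regimes gives $\tilde f^{(n)}(s_n)\leq C(\gamma+\eta)^{n(1-\log\theta_1/\log m)}$ for some constant $C=C(\gamma,\eta)$ independent of $n$, and hence $\P(Z_n\leq\theta_1^n)\leq eC(\gamma+\eta)^{n(1-\log\theta_1/\log m)}$.

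The exponential rate in this bound equals $(1-\log\theta_1/\log m)\log(1/(\gamma+\eta))$. Choosing $\eta$ as a small function of $\theta_2$ (e.g.\ $\eta=\theta_2/3$) and then $\delta$ small enough that $\gamma\leq\eta$ and $m$ is large makes this rate strictly larger than $\log(1/\theta_2)$, so that $\P(Z_n\leq\theta_1^n)\leq\theta_2^n$ for $n$ large. Combining this with $\P(X_n\geq 1)\geq 1-\delta$ and the negligible late-extinction contribution proves the lemma. The main technical obstacle is the quantitative patching of the two phases of the PGF iteration in the third paragraph: one has to track constants carefully to ensure that the transition region $u\in[\eta,1-\eta']$ is crossed in $O(1)$ steps uniformly in the parameters and that the prefactor $C(\gamma,\eta)$ does not swamp the exponential decay. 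Once this estimate is set up, the remaining book-keeping is routine.
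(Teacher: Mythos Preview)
Your reduction to the immortal subtree and your handling of the late-extinction event match the paper's proof, including the bound on $\tilde p_1=f'(q)$ in terms of $p_1$ and $q$. The divergence is in the core estimate on $\P(Z_n\le\theta_1^n)$.

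The paper does not use a Chernoff bound. Instead it decomposes according to the first generation $j$ at which the pruned process branches: before that there is a single line (contributing a factor $\tilde p_1^{\,j-1}$), and after that at least two independent subtrees, each of which must stay below $\theta_1^n$. Summing over $j\le n/2$ and bounding one of the two factors crudely yields a self-referential inequality that gives $\P(\tilde X_n\le\theta_1^n)\lesssim \tilde p_1^{\,n/2}$ once the probability is known to be eventually small. The argument uses only that $\tilde p_1$ is small and $m$ is large; it needs no quantitative control on the shape of the generating function away from $0$ and $1$.

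Your Chernoff route has a genuine gap in the first phase. You write that convexity and $\tilde f'(1)=m$ give $1-\tilde f(1-v)\le mv$, ``forcing the iterates to leave any fixed neighbourhood of $1$ within about $n\log\theta_1/\log m$ steps.'' But this inequality points the wrong way: an \emph{upper} bound on the growth of $v_k=1-u_k$ does not force the iterates to leave. What you need is a lower bound, and the only one available \emph{uniformly} over the class of admissible processes is the one coming from your own quadratic estimate $\tilde f(u)\le\gamma u+(1-\gamma)u^2$, which yields $v_{k+1}\ge(2-\gamma-(1-\gamma)v_k)v_k$, i.e.\ a growth factor of only about $2-\gamma$, not $m$. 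One cannot do better uniformly: take $\tilde p_2=1-\gamma-\varepsilon$ and $\tilde p_K=\varepsilon$ with $K$ huge and $\varepsilon$ tiny; then $m$ is as large as you like, but $\tilde f'$ stays near $2-\gamma$ except in an $O(1/K)$-neighbourhood of $1$. With the honest rate, phase~1 takes about $n\log\theta_1/\log 2$ steps, so for $\theta_1\ge 2$ all $n$ iterations are consumed before reaching the contracting regime and your bound is vacuous. Thus your argument, as written, proves the lemma only in the range $\theta_1<2$; the full statement for all $\theta_1>1$ needs a different idea for the core estimate, such as the first-branching decomposition the paper uses.
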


\begin{proof}
Denote by $g$ the generating function of $(X_n\colon n\in\N)$. By pruning the tree, i.e.\ removing all finite subtrees, we obtain a tree which on survival of the original process equals a Galton-Watson process $(\tilde X_n\colon n\in\N)$ with
$\tilde p_0=0$, $\tilde p_1=f'(q) \leq p_1+ \frac{2q}{(1-q)^2}$, and the same mean as the original process. 
Hence $\delta>0$ can be chosen such that the pruned process has arbitrarily small $\tilde p_1$ and arbitrarily large mean.
\smallskip

Under this assumption we now calculate
\begin{align*}
\P(\tilde X_n \leq \theta_1^n ) & 
\leq  \sum_{j=1}^{\lceil n/2 \rceil} \P\big( \tilde X_n \leq \theta_1^n, \mbox{ first branching in generation } j \big) \\
& \phantom{jodldodldidu} +  \P\big( \mbox{ no branching up to generation }\lceil n/2 \rceil\big) \\
& \leq 
\sum_{j=1}^{\lceil n/2 \rceil}  \big(\P( \tilde X_{n-j} \leq \theta_1^n)\big)^2 \tilde p_1^{j-1} + \tilde p_1^{n/2}\\
& \leq \tilde p_1^{-1} \P(\tilde X_n \leq \theta_1^n ) \P(\tilde X_{\lceil n/2 \rceil} \leq \theta_1^n ) + \tilde p_1^{n/2}.\\
\end{align*}
Hence
$$\P(\tilde X_n \leq \theta_1^n )
\leq \frac{\tilde p_1^{n/2}}{1-\tilde p_1^{-1}\P(\tilde X_{\lceil n/2 \rceil} \leq \theta_1^n )}.$$
Choosing $\delta>0$ so small that $\E \tilde X_1>\theta_1^2$ and $\sqrt{\tilde p_1}<\theta_2$, 
for sufficiently large~$n$, the denominator is larger than $1/2$ and the numerator is smaller than $\frac{1}{8} \theta_2^n$, so that
\[ \P(\tilde X_n \leq \theta_1^n ) \leq \frac{1}{4} \theta_2^n. \]

Note that if we condition $X_n$ on extinction in distribution it is equal to a Galton-Watson process $X_n^*$ with mean $f'(q) = \tilde p_1$. Therefore, by Markov's inequality
\[ \p (X_n \geq 1 \, |\, \mbox{ extinction } ) = \p ( X_n^* \geq 1 ) \leq \E[ X_n^*] = \tilde p_1^n \leq \frac{1}{4} \theta_2^n, \]
by the same assumptions on $\delta$ as above for $n$ large.
We can also assume that $\delta$ is sufficiently small, so that the extinction probability $q$ is less than $1/2$. 
Hence, combining the above estimates, 
\[ \begin{aligned} \p ( X_n \leq \theta_1 \, |\, X_n \geq 1) 
& \leq \frac{1}{\p (X_n \geq 1)} \Big\{  \p (X_n \leq \theta_1 \, |\, \mbox{survival}\,) + \p (X_n \geq 1  \, | \, \mbox{extinction}\,) \Big\} \\
& \leq \frac{1}{1 - q} \big( \frac{1}{4} \theta_2^n + \frac{1}{4} \theta_2^n \big) \leq \theta_2^n ,  
\end{aligned} \]
which completes the proof.
\end{proof}

\subsection{The lower bound}\label{ssn:lower_bound}

Throughout, we use  the same notation as in the upper bound: 
For the proof of Theorem~\ref{thm:percol}, let $(p_n)_{n \in \N}$ be a sequence of retention probabilities with $p_n \downarrow \pc$. For Theorem~\ref{thm:vary_f}, let $(t_n)_{n \in \N}$ be a sequence of parameters with $t_n \uparrow \infty$. We denote by $S^\ssup{n}$ the positions either in the percolated IBRW or in the IBRW with attachment rule $f_{t_n}$. If the context is clear, we will omit the superscript. Also, we write,
\[
\rho_n(\cdot)= \begin{cases}
p_n \rho(\cdot) & \text{ for Theorem~\ref{thm:percol}}\\
\rho_{t_n}(\cdot) & \text{ for Theorem~\ref{thm:vary_f}}\end{cases} \quad \text{and} \quad 
\alpha_n^*:=\begin{cases}
\alpha^* & \text{ for Theorem~\ref{thm:percol}}\\
\alpha_{t_n}^* & \text{ for Theorem~\ref{thm:vary_f}.}\end{cases}
\]
Moreover, we denote by $\eigenfct_n$ the eigenfunction for $\rho_n(\alpha_n^*)$ from Lemma~\ref{lem:SpecProp}. 

Finally, we introduce for all $n \in \N$,
\begin{equation}\label{eq:sigma}
\eps_n:=\log \rho_n(\alpha_n^*) \quad \text{and} 
\quad \sigma^2:=\lim_{n \to \infty} \frac{\rho_n''(\alpha_n^*)}{\rho_n(\alpha_n^*)}.
\end{equation}
Notice, that in the situation of Theorem~\ref{thm:percol}, $\sigma^2=\rho''(\alpha^*)\pc$. The choice $\alpha=\alpha_n^*$ guarantees by Proposition~\ref{prop:conv} that for both theorems, $\eps_n\downarrow 0$ as $n \to \infty$.

Given any starting point $s\geq 0$ and initial type $\tau$, we will write 
$P = P_{(s,\tau)} =  P^{p_n}_{(s,\tau)}$ in the percolation case and $P = P_{(s,\tau)} = P^{1}_{(s,\tau)}$ in the case of Theorem~\ref{thm:vary_f}.

In view of Lemma~\ref{le:GHS-GW}, we will now choose a 
Galton-Watson tree $\subGW$ as a subtree of the killed IBRW in the following way, where we denote by $X^\ssup{n}$ the number of children in the first generation of $\subGW$:
\begin{enumerate}
\item[(a)] $P(X^\ssup{n} \neq 0) \approx $ the survival probability of the Galton-Watson process. That is, when there are offspring, then the process usually survives.
\item[(b)] $P(X^\ssup{n} \neq 0)$ is close to the survival probability of the killed IBRW. That means that we chose the subpopulation as a good approximation of the BRW and that the first inequality in \eqref{StartingPoint} is a good estimate.
\item[(c)] $P(1\le X^\ssup{n} \le r^{-2})$ has to be small to beat $r^{2}$. 
\end{enumerate}

The Galton-Watson tree is obtained by a coarse-graining procedure, which we now describe. It involves positive parameters $b, \lambda, \theta, M$ which will be chosen carefully at a later stage of the proof. We group together the first $N+o(N)$ generations in the IBRW to form the first generation in $\subGW$.
It turns that we have to choose $N = N_n$ such that
\[ N_n = \lfloor (b/\eps_n)^{3/2} \rfloor  , \]
and for the first $N$ steps we only choose particles whose positions are in the interval
\[
I_{i,n}= [-\tfrac{\theta b}{N_n^{2/3}} i - \lambda N_n^{1/3}, -\tfrac{\theta b}{N_n^{2/3}} i].
\]

To be precise, let $L(N) = N + \lfloor N^{1/3} \rfloor$ and 
\begin{align*}
\subOff&=\{ x \colon \abs{x}=L(N_n), \loc(x_i)-\loc(x_0)  \in I_{i,n}, 
 \text{ for }i=1,\ldots, N_n,\\
 & \hspace{3cm} \loc(x_i) - \loc(x_{i-1}) \leq M, \text{ for } i=N_n+1, \ldots, L(N_n) \}. 
\end{align*}
Then we define $\subOff$ to be the particles in the first generation of $\subGW$. We include the last $\lfloor N^{1/3} \rfloor$ generations to make sure that if we survive until time $N$, then we will have many particles by time $L(N)$.

We iterate the procedure, i.e.\ the children of $y \in \cC_n$ will be
\[ \begin{aligned}
\subOff(y)&=\{ x \colon \abs{x}=2L(N_n), y \leq x , \loc(x_i)-\loc(y)  \in I_{i,n}, 
 \text{ for }i=L(N_n)+ 1,\ldots, L(N_n) + N_n,\\
 & \hspace{3cm} \loc(x_i) - \loc(x_{i-1}) \leq M, \text{ for } i= L(N_n) + N_n+1, \ldots,2 L(N_n) \}. \end{aligned}\]
 and $\bigcup_{y \in \subOff} \subOff(y)$ will form the individuals of the second generation of $\subGW$ and we continue in a similar way.
 Note that by the construction of $I_{i,n}$, we only include children in the second generation of the original IBRW that are to the left of the position of their ancestor. In particular, their distribution does not depend on the type of the parent. Moreover, all the conditions on the spatial positions are relative to $S(\emptyset)$. 
 Therefore, the distribution of $\subOff$ does not depend on either  the type of the root $\emptyset$ nor the initial position $S(\emptyset)$.
 Similarly, the distribution of $\subOff(y)$ does not depend on either type nor position of $y$. 
Hence, the number of individuals in the different generations really do form a Galton-Watson process.

Moreover, if we assume that
\begin{equation}\label{eq:3110-1} M < \theta b, \end{equation}
then  we have that 
all particle positions satisfy $\loc(x_i) -\loc(x_0)\leq 0 $ for all $i \leq |x|$ and $ x\in \subOff$
and $\subGW$ is really a subset of the killed branching random walk.

{\bf Coupling with a Galton-Watson process}

To control the contribution of the last $N^{1/3}$ steps of the branching random walk, 
we use the following coupling: 
We can couple the IBRW with a modified IBRW, where in generations 
$k L(N) + N,\ldots,k L(N) +  L(N)-1$, for $k \in \N_0$,  the particles place their offspring according to the following rules relative to their own position:
\begin{itemize}	
	\item[(i)] to the right, the positions of the offspring follow the jump times of the birth process $Z_t^\ssup{f}$ started in $0$, which jumps from $k$ to $k+1$ at rate $f(k)$. 
	\item[(ii)] to the left, the positions of the offspring are given by a Poisson point process with intensity
\[ e^t E[f(Z_{-t}^\ssup{f})] \1_{(-\infty,0]}(t) \, dt ,  \]
where $(Z_t^\ssup{f})$ is the birth process with jump rate $f$ started in $0$.	
	\item[(iii)] Also, in these generations types do not play a role, so we define all particles to have the same type as $S(\emptyset)$ in the original process.
\end{itemize}
Now, we define the Galton-Watson $\underline {\rm GW}_n$ similarly to above in terms of the modified IBRW, where we 
again denote by $\underline \cC_n$ the individuals in the first generation. Since $(f_{t_k})$ is decreasing, 
we can couple the processes such that if $\underline {\rm GW}_n$ survives then $\subGW$ survives and also
such that $|\underline \cC_n| \leq |\cC_n|$.

For the lower bound on $P ( \subOff \neq \emptyset)$ it turns out that it is enough to control 
the probability of the set
\[ \OffAsymp =  \{ x \colon \abs{x}=N_n, \loc(x_i)-\loc(x_0)  \in I_{i,n} , \mbox{ for } i = 1, \ldots, N_n \} . \]
being non-empty. 
Note that for both, the original process and the modification, the set $\OffAsymp$ is the same, however in 
the modified process, the next generations $N+1, \ldots, L(N)$ have the distribution of a single-type 
branching random walk. In particular, the number of particles in each generation form a 
Galton-Watson process, which we will denote by $(\underline X_k)_{k \in \N_0} = (\underline X_k(M))_{k \in \N_0}$. Moreover,  we will denote by $\underline q = \underline q_M$ its extinction probability when started with a single particle.
Note that $\underline q$ does not depend on $n$  and  we will use
that by increasing $M$  we have that $\lim_{M \ra \infty} \underline q_M = 0$ and 
also  $\lim_{M \ra \infty} \E \underline X_1(M)  = \infty$.

By the Markov property, the survival of the two subsets of the killed branching branching walk 
are related by 
\begin{equation}\label{eq:asymp_ok} P ( \subOff \neq \emptyset) \geq P( \underline \cC_n \neq \emptyset) \geq (1-  \underline q_M) \p ( \OffAsymp \neq \emptyset) . \end{equation}

The next result is
the key step in the overall lower bound, where we will bound the probability that $\OffAsymp$ is non-empty.

\begin{proposition}\label{le:thining_survival} For any $\alpha^* \theta < 1$ and any $\la^2 > \frac{\pi^2 \sigma^2}{2 b (1- \alpha^*\theta)}$, we have
for any initial position $s \geq 0$ and initial type $\tau \in\typespace$,
\[ \liminf_{n \ra \infty} N_n^{-1/3} \log P_{(s,\tau)} \big( \OffAsymp \neq \emptyset \big)\geq - \la \alpha^* . \]
\end{proposition}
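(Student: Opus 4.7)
The plan is a truncated second-moment argument on a sub-population $\OffThin$ of $\OffAsymp$, with both moments controlled by the many-to-one lemma and Mogulskii's theorem. Define $\OffThin \subset \OffAsymp$ to consist of the $x$ whose ancestral line satisfies, at every $i \in [N_n]$, both $\nu_{x_{i-1}}((-\infty,M_n)) \leq R_n$ and $S(x_i)-S(x_{i-1}) \leq M_n$, with $M_n = N_n^{1/5}$ and $R_n = e^{N_n^{1/4}}$. By Corollary~\ref{cor:negligible}, each single step satisfies these bounds with probability $\geq 1 - \tilde C e^{-\tilde\gamma N_n^{1/5}}$ uniformly in type; since $N_n\, e^{-\tilde\gamma N_n^{1/5}} \to 0$, the thinning affects no quantity at scale $N_n^{1/3}$, so it suffices to prove the bound with $\OffThin$ in place of $\OffAsymp$.

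For the first moment, apply the many-to-one lemma (Lemma~\ref{le:mto_2}) at $\alpha = \alpha_n^*$. Using Lemma~\ref{lem:SpecProp}(v) for the eigenfunction ratio, together with the fact that $S(x_{N_n}) - s \geq -(\theta b + \lambda) N_n^{1/3}$ on the good event $\{S(x_i) - s \in I_{i,n}\,\forall i\}$, one obtains
\[
E_{(s,\tau)}[|\OffThin|] \;\geq\; c\,\rho_n(\alpha_n^*)^{N_n}\,e^{-\alpha_n^*(\theta b + \lambda)N_n^{1/3}}\,\P^{\alpha_n^*}_{(s,\tau)}\!\bigl(S_i - s \in I_{i,n}\,\forall i,\,\text{truncated}\bigr).
\]
By a telescoping argument as in~\eqref{eq:2911-2}, this probability is at least $(1-o(1))$ times the probability of the geometric event alone for the conditioned Markov chain $(\tilde S_i)$ of Lemma~\ref{lem:DonskCondCheck2}. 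That lemma verifies Donsker's invariance for $(\tilde S_i)$, so Mogulskii (Theorem~\ref{thm:mogulskii}) applies with $a_n = N_n^{1/3}$, $k_n = N_n$, and linear barriers $g_1(t) = -\theta b t - \lambda$, $g_2(t) = -\theta b t$, of constant width $\lambda$, giving $N_n^{-1/3}\log \P(\tilde S_i - s \in I_{i,n}\,\forall i) \to -\pi^2\sigma^2/(2\lambda^2)$. Combined with $\eps_n N_n = b N_n^{1/3}(1+o(1))$, this yields
\[
N_n^{-1/3}\log E_{(s,\tau)}[|\OffThin|] \;\geq\; b(1-\alpha^*\theta) - \alpha^*\lambda - \tfrac{\pi^2\sigma^2}{2\lambda^2} + o(1),
\]
and the hypothesis $\lambda^2 > \pi^2\sigma^2/(2b(1-\alpha^*\theta))$ forces this exponent to strictly exceed $-\alpha^*\lambda$.

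For the second moment, decompose $E[|\OffThin|(|\OffThin|-1)]$ by the generation $k$ of the most recent common ancestor of the two particles. The branching property factorises the contribution of each $z$ with $|z|=k$ on a good truncated path into (i) the number of good sibling pairs born to $z$, bounded by $R_n^2 = e^{o(N_n^{1/3})}$ thanks to the truncation, and (ii) the expected numbers of $\OffThin$-descendants from each sibling, estimated again by many-to-one over the remaining $N_n-k-1$ generations together with Mogulskii on a strip of width $\lambda N_n^{1/3}$. Summing in $k$ and recombining the Mogulskii rates on $[0,k]$ and $[k,N_n]$ to the full-interval rate, one obtains $E[|\OffThin|^2] \leq e^{o(N_n^{1/3})}\bigl(E[|\OffThin|]^2 + E[|\OffThin|]\bigr)$. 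Paley--Zygmund then gives
\[
P_{(s,\tau)}(|\OffThin|\geq 1) \;\geq\; \frac{E[|\OffThin|]^2}{E[|\OffThin|^2]} \;\geq\; \frac{E[|\OffThin|]}{1+E[|\OffThin|]}\,e^{-o(N_n^{1/3})},
\]
so in both regimes ($E[|\OffThin|]$ bounded below or vanishing) one has $N_n^{-1/3}\log P_{(s,\tau)}(|\OffThin|\geq 1) \geq \min(0,\,\text{first-moment exponent}) + o(1) \geq -\alpha^*\lambda$, as claimed. The main technical obstacle is the second-moment step: one must show that the Mogulskii rates over $[0,k]$ and $[k,N_n]$ truly recombine into the full-interval rate for \emph{every} intermediate $k$, and that the sibling factor is negligible; the thresholds $R_n$ and $M_n$ from Corollary~\ref{cor:negligible} are calibrated precisely so that $R_n^2 = e^{o(N_n^{1/3})}$ and the per-step conditioning errors remain summably small.
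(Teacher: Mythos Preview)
Your overall architecture is right and coincides with the paper's: define a truncated sub-count $Y_n=|\OffThin|$ of $\OffAsymp$, apply Paley--Zygmund, and control both moments via many-to-one plus Mogulskii. But the quantitative claims in your second-moment step are false, and your first-moment bound is one order too crude; together these are a genuine gap, not a cosmetic one.

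\textbf{The second-moment claim fails.} You assert $E[Y_n^2]\le e^{o(N_n^{1/3})}\bigl(E[Y_n]^2+E[Y_n]\bigr)$, arguing that the Mogulskii rates over $[0,k]$ and $[k,N_n]$ ``recombine'' to the full-interval rate. The Mogulskii probabilities do recombine additively, but the many-to-one formula also carries the factor $e^{\alpha S}$. When you bound the expected number of good descendants from a branching point at generation $k$, you must take a supremum over the starting position $s_0\in I_{k,n}$, an interval of width $\lambda N_n^{1/3}$; this supremum inflates $e^{\alpha S_{N_n-k}}$ by $e^{\alpha^*\lambda N_n^{1/3}}$. With \emph{two} such branches from the MRCA you pick up $e^{2\alpha^*\lambda N_n^{1/3}}$, and comparing to your (crude) lower bound on $E[Y_n]^2$ one finds $E[Y_n^2]/E[Y_n]^2$ is of order $e^{c\alpha^*\lambda N_n^{1/3}}$, not $e^{o(N_n^{1/3})}$. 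Indeed this width-$\lambda$ slack is precisely the source of the $-\alpha^*\lambda$ in the final answer, so it cannot be $o(N_n^{1/3})$.

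\textbf{The first-moment bound is too weak.} Using only $S(x_{N_n})\ge -(\theta b+\lambda)N_n^{1/3}$ gives exponent $b(1-\alpha^*\theta)-\alpha^*\lambda-\tfrac{\pi^2\sigma^2}{2\lambda^2}$, which is $\alpha^*\lambda$ below the sharp value. The paper recovers this by additionally imposing $S_{N_n}\ge -\theta_1 b N_n^{1/3}$ for $\theta_1>\theta$ and invoking the \emph{endpoint-constrained} Mogulskii statement (the second display in Theorem~\ref{thm:mogulskii}), then letting $\theta_1\downarrow\theta$.

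\textbf{How the paper closes the gap.} Instead of splitting both lines at the MRCA, the paper keeps one full ancestral line intact and bounds
\[
E[Y_n^2]\;\le\; E[Y_n]\Bigl(1+R_n\sum_{j=0}^{N_n-1}h_{j+1,n}\Bigr),
\]
where $h_{j,n}=\sup_{s_0\in I_{j,n},\tau}E_{(s_0,\tau)}[\#\{\text{good descendants at gen }N_n\}]$. This has only \emph{one} $h$-factor, hence only one $e^{\alpha^*\lambda N_n^{1/3}}$ loss. A uniform Mogulskii estimate (discretising $j$ into finitely many blocks) gives $\limsup N_n^{-1/3}\log\sum_j h_{j,n}\le b(1-\alpha^*\theta)+\alpha^*\lambda-\tfrac{\pi^2\sigma^2}{2\lambda^2}$. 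Combined with the sharp first-moment bound $\liminf N_n^{-1/3}\log E[Y_n]\ge b(1-\alpha^*\theta)-\tfrac{\pi^2\sigma^2}{2\lambda^2}$, Paley--Zygmund yields exactly $\liminf N_n^{-1/3}\log P(\OffAsymp\neq\emptyset)\ge -\alpha^*\lambda$.
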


 \begin{proof}
By construction the probability of the event $\OffAsymp \neq \emptyset$  does not depend on the starting point of the initial point nor its type, so we can assume that $S^\ssup{n}(\emptyset) = 0$ and $\tau(\emptyset) = 0$.

For the first part of the proof, we will omit the indices $n$, whenever the context is clear and we are not dealing with asymptotic statements.
In particular, we will write $N = N_n, S = S^\ssup{n}$, $\alpha = \alpha_n$, etc. Also, in this proof $\rho = \rho^{p_n}$ in the percolation case and $\rho = \rho_{t_n}$ if the attachment rule is varying.

The first step is to carefully, select the relevant particles in $\OffAsymp$. 
For  $R_n = e^{N^{1/4}}$
and $M_n = N^{1/5}$ and an individual $x$, we write 
\[ \Delta S(x_i) = S(x_i) - S(x_{i-1}), \quad i \leq |x| . \]
Recall that
\[ \nu_x = \sum_{y \, : \, y- =x } \delta_{\Delta S(y)} . \]
For any $|x| = N$, let 
\[ \bar \nu_{x_{i-1}} := \nu_{x_{i-1}} \big( ( - \infty, M_n) \big). \]
Define
\[
Y_n= \#\{ x \, : \, \abs{x}=N , \loc(x_i)\in I_{i,n},  \bar\nu_{x_{i-1}} \leq R_n, \Delta S(x_i) \leq M_n\, \forall i \in [N]\}.
\]
The Paley-Zygmund inequality yields 
\begin{equation}
P(\OffAsymp \neq \emptyset ) \geq  P(Y_n \ge 1) \ge \frac{E[Y_n]^2}{E[Y_n^2]}.
\end{equation}
The remaining proof proceeds as follows: in the first step we find an easier upper bound on $\E[Y_n^2]$, which we can estimate using the many-to-one lemma in the second step. In Step 3, we find a lower bound on $\E[Y_n]$, which we will then combine with the other steps to obtain our claim.

\emph{Step 1: Upper bound on $\E[Y_n^2]$}. First write
 \begin{equation*}
 \begin{aligned}
E[Y_n^2] & = E\Big[ \sum_{\abs{x}=N} \sum_{\abs{y}=N}  \mathbbm{1}{\{\loc(x_i) \in I_{i,n}, \bar \nu_{x_{i-1}} \leq R_n, \Delta S(x_i) \leq M_n\ \forall i \leq N\}} \\[-3mm]
& \hspace{4cm} \times \1{\{\loc(y_i) \in I_{i,n} , \bar \nu_{y_{i-1}} \leq R_n, \Delta S(y_i) \leq M_n \, \forall i \le N\}}\Big]
\end{aligned}
\end{equation*}
We split the sum over $y$ according to the last time $j \in \{ 0, \ldots, N\}$
that the ancestors of $y$ agree with $x$ to obtain 
 \[\begin{aligned}
E[Y_n^2] & = 
E\Big[ \sum_{|x| = N } \1\{\loc(x_i)\in I_{i,n},  \bar\nu_{x_{i-1}} \leq R_n, \Delta S(x_i) \leq M_n\, \forall i \in [N]\} \\
& \quad \times \sum_{j=0}^N 
\sum_{|y| = N} \1\{ y_i = x_i \forall i \leq j, y_{j+1} \neq x_{j+1} ,  \loc(y_i)\in I_{i,n},  \bar\nu_{y_{i-1}} \leq R_n, \Delta S(y_i) \leq M_n\, \forall i \in [N] \}\Big]. 
\end{aligned} \]
Conditioning on the $j+1$ first generations and using the independence of the branching process and the fact that on $\{ \bar \nu_{x_j} \leq R_n\}$, 
we only have to consider at most $R_n$ relevant siblings of $x_{j+1}$, 
we obtain the upper bound
\[
 E[Y_n^2] \leq 
E\Big[ \sum_{|x| = N } \1\{\loc(x_i)\in I_{i,n},  \bar\nu_{x_{i-1}} \leq R_n, \Delta S(x_i) \leq M_n\, \forall i \in [N]\} \Big]
\Big(1+ \sum_{j=0}^{N-1}R_n h_{j+1,n}\Big), \]
where $h_{N,n} = 1$ and for $j \leq N-2$,
\[ h_{j,n} := \sup_{s_0 \in I_{j,n} , \type_0 \in \typespace } E_{(s_0, \type_0)}\Big[\sum_{\abs{y'}=N-j} \mathbbm{1}\set{\loc(y_i') \in I_{i+j,n} \,\forall  i\le N-j)}\Big] . \]

In particular, we have shown that 
\begin{equation*}
E[Y_n^2] \le E[Y_n] \Big(1+ \sum_{j=0}^{n-1}h_{j+1,n} R_n \Big), 
\end{equation*}
leading to 
\begin{equation}\label{eq:second_moment}
P(\OffAsymp \neq \emptyset ) \ge \frac{E[Y_n]}{1+\sum_{j=0}^{n-1}h_{j+1,n} R_n}.
\end{equation}

\emph{Step 2: Upper bound on $h_{j,n}$.}
First, we note since the left, resp.\ right, end point of $I_{k,n}$ are to the left of the left, resp. right, end point of 
$I_{j,n}$ for $k \geq j$, we can apply the monotonicity in the initial types to deduce that
\[ h_{j,n} \leq \sup_{s_0 \in I_{j,n}  } E_{(0, 0)}\Big[\sum_{\abs{y'}=N-j} \mathbbm{1}\set{\loc(y_i')+s_0 \in I_{i+j,n}, \,\forall  i\le N-j}\Big]  \]

Thus, we obtain by the many-to-one Lemma~\ref{lem:mto}
\[\begin{aligned} h_{j,n} &\leq \sup_{s_0 \in I_{j,n}  } E_{(0, 0)}\Big[\sum_{\abs{y'}=N-j} \mathbbm{1}\set{\loc(y_i')+s_0 \in I_{i+j,n} \,\forall  i\le N-j}\Big]  \\
&  = \sup_{s_0 \in I_{j,n}}
\E^\alpha_{(0,0)} \Big[e^{\alpha S_{N-j}} \frac{v_\alpha(\tau_0)}{v_\alpha(\tau_N)} \rho(\alpha)^{N-j}\1\{ S_i + s_0 \in I_{i+j,n} \,  \forall i \leq N-j\} \Big]
\end{aligned}\]
By the monotonicity in types, Lemma~\ref{lem:SpecProp}, we have that $v_\alpha(\tau_0) / v_\alpha(\tau_N) \leq v_\alpha(0) / v_\alpha(\ell)$ and further by Proposition~\ref{prop:conv}, $C := \sup_{n} \frac{v^\ssup{n}_{\alpha_n}(0)}{v^\ssup{n}_{\alpha_n}(\ell)}< \infty$, so that
\begin{equation}\label{eq:2910-1}\begin{aligned}
h_{j,n} & \leq C \sup_{s_0 \in I_{j,n}}
\E^\alpha_{(0,0)} \Big[e^{\alpha S_{N-j} } \rho(\alpha)^{N-j}\1\{ S_i + s_0 \in I_{i+j,n}  \,\forall i \leq N-j\} \Big] \\
& \leq C \sup_{u \in [-\la N^{1/3}, 0]}
e^{- \alpha \theta \eps_n (N - j) + \la \alpha N^{1/3}  }\rho(\alpha)^{N-j}\,
\p^\alpha_{(0,0)} \big( S_i + u  -\tfrac{\theta b}{N^{2/3}} j \in I_{i+j,n}  \,\forall i \leq N-j\} \big) , 
\end{aligned}\end{equation}

We now would like to apply Mogulskii's theorem to estimate the probability, but
 we need a bound that works uniformly for all $j$ (since we will be summing over $j$) and uniformly in $u$. 
We thus approximate the sum over $j$ by a finite sum and also split up the interval $[-\la N^{1/3},0]$ into smaller subintervals. 
So fix $\kappa \in \N$ and define 
\[ K := K_n := \lfloor N/\kappa \rfloor. \]
For the next estimate, suppose that $j \in [ (j' -1) K , j' K -  1]$ for some $j' \in \{1, \ldots, \kappa -1 \}$ and
assume $u \in [- q \la N^{1/3}/ \kappa, - (q-1) \la N^{1/3}/\kappa]$ for some $q \in \{1, \ldots, \kappa\}$. Then,
\[\begin{aligned}   \big\{ S_i & + u   - \tfrac{\theta b}{N^{2/3}} j \in I_{i+j,n}  \,\forall i \leq N-j\} \big\} \\
& = \big\{ - \tfrac{\theta b}{N^{2/3}}  (i+j) - \la N^{1/3} \leq S_i + u  - \tfrac{\theta b}{N^{2/3}} j \leq - \tfrac{\theta b}{N^{2/3}} (i+j)   \,\forall i \leq N-j \big\} \\
& \subseteq  \big\{  - \tfrac{\theta b}{N^{2/3}}  i - \la N^{1/3} \leq S_i + u  \leq - \tfrac{\theta b}{N^{2/3}}  i   \,\forall i \leq (\kappa - j') K \big\} \\ 
& \subseteq \Big\{ (q-1)\la N^{1/3}/ \kappa  -  \tfrac{\theta b}{N^{2/3}} i - \la N^{1/3} \leq S_i   \leq -  \tfrac{\theta b}{N^{2/3}} i + q \la N^{1/3}/\kappa  \,\forall i \leq (\kappa - j') K \Big\} \\ 
& \subseteq \Big\{ (q-1)\la / \kappa  - \tfrac{ \theta b}{N} i - \la  \leq \tfrac{S_i}{N^{1/3}}   \leq -  \tfrac{\theta b}{N} i + q \la/\kappa  \,\forall i \leq (\kappa - j') K \Big\} \\ 
& \subseteq \Big\{ \frac{ (q-1 - \kappa)\la}{\kappa} - \theta b \frac{\kappa - j'}{\kappa} \frac{i}{(\kappa - j') K}   \leq \frac{S_i }{N^{1/3}}   \leq - \theta b \frac{\kappa - j' - 1}{\kappa } \frac{i }{(\kappa - j') K}  + \frac{q \la}{\kappa}  \,\forall i \leq\ (\kappa - j') K \Big\} \\
& =: E_{j',n} ,
\end{aligned} 
\]

where we assumed that $n$ is sufficiently large so that
\[  \frac{i}{N} \leq \frac{\kappa - j'}{\kappa}  \frac{i }{(\kappa - j') K} \quad\mbox{and} \quad
 \frac{i}{N} \geq \frac{\kappa - j'}{\kappa}  \frac{i }{(\kappa - j') K} \frac{K}{K+1} 
\geq  \frac{\kappa - j'}{\kappa} \Big(1 - \frac{1}{\kappa - j'}\Big)\frac{i }{(\kappa - j') K} . \]

By Lemma~\ref{lem:DonskCondCheck} we can apply
 Mogulskii's theorem, Theorem~\ref{thm:mogulskii} with 
$g_1(t) = \frac{(q-1 - \kappa)\la}{\kappa} - \theta b \frac{\kappa - j'}{\kappa} t$ and $g_2(t) = 
- \theta b \frac{\kappa - j' - 1}{\kappa } t + \frac{q \la}{\kappa}$.
\[ \lim_{n \ra \infty} \frac{N_n^{2/3}}{(\kappa - j')K_n} \log \p^{\alpha_n}_{(0,0)} (E_{j',n}) = - \frac{\pi^2 \sigma^2}{2}
\int_0^1 \frac{1}{(g_2(t) - g_1(t))^2} \, d t =: -C(\kappa), \]
where $\sigma$ is defined in~\eqref{eq:sigma} and we noticed that the integral does not depend on $q$ nor $j'$.

Hence, applying the display~\eqref{eq:2910-1} , where we use the definition of $\eps_n= \rho(\alpha)$, 
and also that $\alpha_n \theta < 1$ (since $\alpha^*\theta < 1$, for all $n$ sufficiently large, uniformly in $j \in [(j'-1)K, j'K-1]$, we find that
\[ \begin{aligned} \log h_{j,n} & \leq (- \alpha_n \theta + 1 ) \eps_n (N_n-j) + \la\alpha_n N_n^{1/3} - \frac{(\kappa - j') K_n}{N_n^{2/3}} C(\kappa) (1+o(1)) \\
& \leq (- \alpha_n \theta + 1 ) \frac{b}{(N_n+1)^{2/3}}  (N_n-j) + \alpha_n\la N_n^{1/3} - \frac{(\kappa - j') K_n}{N_n^{2/3}} C(\kappa) (1+o(1)) \\
& \leq N^{1/3} ( - \alpha^* \theta b + b + \alpha^*\la - C(\kappa) ) (1+o(1)) +
j' K_n (  - b( 1 - \alpha^* \theta ) + C(\kappa) ))(1+o(1))  . 
\end{aligned} \]
Note that $C(\kappa) \ra  \frac{\pi^2\sigma^2}{2\la^2}$ as $\kappa\ra\infty$. However, by assumption we have that $\frac{\pi^2\sigma^2}{2\la^2} < b(1-\alpha^* \theta)$, so we can assume that $\kappa$ is large enough such that $C(\kappa) < b (1 - \alpha^* \theta)$. In this case, we can deduce that 
\begin{equation}\label{eq:2509-1}  \log h_{j,n} \leq N^{1/3} ( - \alpha^* \theta b + b + \alpha^*\la - C(\kappa) ) (1+o(1)) . \end{equation}
Moreover, if $j \in [(\kappa - 1)K+1, N]$, then we can use the trivial bound 
\begin{equation}\label{eq:2509-2}
\log (h_{j,n} / C) \leq   \alpha_n \la N^{1/3} + (N-j) \eps_n 
\leq \alpha_n\la N^{1/3} +  \frac{b}{N^{2/3}} 2 K 
\leq N^{1/3} (\alpha^* \la + 2b /\kappa)(1+o(1)). 
\end{equation}
Consequently,  if we combine~\eqref{eq:2509-1} and~\eqref{eq:2509-2} we obtain 
\[ \limsup_{n \ra \infty} N_n^{-1/3} \log \sum_{j=1}^N h_{j,n}
\leq b(1 - \alpha^* \theta + 2/\kappa)+ \alpha^* \la - C(\kappa)  . \]
Finally, letting $\kappa \ra \infty$ gives
\[ \limsup_{n \ra \infty} N_n^{-1/3} \log \sum_{j=1}^N h_{j,n}
\leq  - \alpha^* \theta b + b + \alpha^* \la - \frac{\pi^2 \sigma^2}{2\la^2}. \]

\emph{Step 3:  Lower bounding $E [ Y_n]$.}
Let $\theta_1 > \theta$, then if we set $\Delta S_i = S_i - S_{i-1}$ and $\bar \nu_i = \nu_i((-\infty,M_n))$, we have by Lemma~\ref{le:mto_2} that
\[ \begin{aligned} E[Y_n] &  = \E^\alpha_{(0,0)} \Big[ e^{\alpha \loc_N} \rho(\alpha)^N  \frac{v_\alpha(0)}{v_\alpha(\tau_N)}\1_{\{ S_i\in I_{i,n}, \bar\nu_{i-1} \leq R_n, \Delta S_i \leq M_n \forall i \leq N \}} \Big] \\
& \geq \E^\alpha_{(0,0)} [ e^{\alpha \loc_N} \rho(\alpha)^N \1_{\{ S_i\in I_{i,n}, \bar\nu_{i-1} \leq R_n, \Delta S_i \leq M_n \forall i \leq N, S_N \geq - \theta_1 b N^{1/3} \}}  ] \\
& \geq e^{- \alpha \theta_1 b N^{1/3}} \rho(\alpha)^N \p^\alpha_{(0,0)} \big( S_i\in I_{i,n}, \bar\nu_{i-1} \leq R_n, \Delta S_i \leq M_n \forall i \leq N, S_N \geq - \theta_1 b N^{1/3} \big), 
   \end{aligned}
\]
where we also used that by the monotonicity of types, see Lemma~\ref{lem:SpecProp}, $v_\alpha(0) \geq v_\alpha(s)$ for any $s \in \typespace$.

Define the Markov chain 
$(\tilde S_k, \tilde \tau_k, \tilde \nu_{k-1})_{k \in \N}$ 
with associated filtration $( \mathcal{F}_k)$ and transitions given for any measurable $F$ by 
\[ \E^\alpha [ F(\tilde S_k - \tilde S_{k-1}, \tilde \tau_{\tau_k} , \tilde \nu_{k-1} )\, |\, \mathcal{F}_{k-1} ] 
= \E^{\alpha}_{(0,\tau_{k-1})} [ F(S_1, \tau_1, \nu_0) \, |\, \nu_0((-\infty, M_n) ) \leq R_n, S_1\leq M_n ] .
\]
Then, we can continue the previous display as
\begin{equation}\label{eq:2710-1} \begin{aligned} E[Y_n] & \geq e^{- \alpha \theta_1 b N^{1/3}} \rho(\alpha)^N \p^\alpha_{(0,0)} \big( \tilde S_i\in I_{i,n} \forall i  \leq N, \tilde  S_N \geq - \theta_1 b N^{1/3} \big)\\ 
&\hspace{4cm} \times \inf_{\tau \in \typespace} \p^\alpha_{(0,\tau)}( \bar\nu_{0} \leq R_n, \Delta S_1 \leq M_n )^N . 
\end{aligned}\end{equation}

Note that
\[\begin{aligned}
 \p^\alpha_{(0,0)}  \big( \tilde S_i\in I_{i,n} \forall i \in \leq N, \tilde S_N \geq - \theta_1 b N^{1/3} \big)
& = \p^\alpha_{(0,0)}  \Big(- \frac{\theta b}{N} i- \la  \leq \frac{\tilde S_i}{N^{1/3}} \leq - \frac{\theta b}{N} i \,  \forall i  \leq N,\tilde  S_N \geq - \theta_1 b N^{1/3} \Big)
\end{aligned} \]

By Lemma~\ref{lem:DonskCondCheck2} we can apply 
 Mogulskii's Theorem~\ref{thm:mogulskii} also to $(\tilde S_k)_{k \in \N_0}$
 with $g_1(t) = -\theta b t - \la$, $g_2(t) = -\theta b t$, so that 
we get 
\begin{equation}\label{eq:2710-2} \lim_{n \ra \infty} N_n^{-1/3} \log \p^{\alpha_n}_{(0,0)}  \Big(- \frac{\theta b}{N} i- \la  \leq \frac{\tilde S^\ssup{n}_i}{N^{1/3}} \leq - \frac{\theta b}{N} i \  \forall i  \leq N, \tilde S_N \geq - \theta_1 b N^{1/3} \Big) =  - \frac{\pi^2 \sigma^2}{2 \la^2} .\end{equation}

We estimate the last term in~\eqref{eq:2710-1} by deducing from Corollary~\ref{cor:negligible} that there exist
$\tilde C, \tilde \gamma > 0$ such that
\[ \inf_{\tau \in \typespace} \p^\alpha_{(0,\tau)}( \bar\nu_{0} \leq R_1, \Delta S_1 \leq M_n )^N
\geq (1 - \tilde C e^{-\tilde \gamma N^{1/5}}  )^N , \]
so that in particular, 
\begin{equation}\label{eq:2710-3} \lim_{N \ra \infty} N^{-1/3} \log \Big( \inf_{\tau \in \typespace} \p^\alpha_{(0,\tau)}( \bar\nu_{0} \leq R_n, \Delta S_1 \leq M_n )^N\Big)  = 0 . \end{equation}
Combining~\eqref{eq:2710-2} and~\eqref{eq:2710-3}, we get from~\eqref{eq:2710-1} 
if we recall the definition of $\eps_n$ that
\[ \liminf_{n \ra \infty} N_n^{-1/3} \log E[Y_n] 
\geq - \alpha \theta_1 b + b - \frac{\pi^2 \sigma^2}{2 \la^2} . \]

\emph{Step 4: Combining the estimates.}

Combining the upper bound on $\sum_{j=1}^N h_{j,n}$ with the lower bound on $E[Y_n]$
as well as the fact that $R_n = e^{N^{1/4}}$, we obtain from the second moment bound~\eqref{eq:second_moment}, 
\[ \liminf_{n \ra \infty} N_n^{-1/3} \log P \big( \OffAsymp\neq \emptyset \big)  \geq - \alpha \theta_1 b + b - \frac{\pi^2 \sigma^2}{2 \la^2} + \alpha b \theta - b -\alpha \la + \frac{\pi^2\sigma^2}{2 \la^2} = 
- \alpha\la
+ \alpha b(\theta - \theta_1) .  \]
Finally, we can let $\theta_1 \downarrow \theta$ 
to obtain the claim of the proposition.
\end{proof}

\begin{proof}[Proof of Theorems~\ref{thm:percol} and~\ref{thm:vary_f} -- lower bounds]
By the same argument as at the end of the proof of the upper bound, we have completed the proofs of Theorems~\ref{thm:percol} and~\ref{thm:vary_f}
if we can show that if $\zeta_n$ is either $\theta(p_n,f)$ or $\theta(1,f_{t_n})$, then 
\begin{equation}\label{eq:compl} \lim_{ n \ra \infty} \sqrt{\eps_n} \log \zeta_n \geq - \sqrt{\frac{\pi^2 \sigma^2}{2}} \alpha^* . \end{equation}

In our above construction, we first of all choose the constant $M$ large enough such that
the Galton-Watson process $(\underline X_k(M))_{k \in \N_0}$ satisfies the assumptions of Lemma~\ref{le:exp_growth}
for $\theta_1 = 2$ and $\theta_2 = 1/2$. Also, we can assume that its survival probability satisfies $1- \underline q_M \geq 1/2$.
Let $\theta$ be such that $\alpha^* \theta < 1$. 
Then, choose $b$  large enough such that $\theta b > M$, so that in particular $\subGW$ is a subset of the killed IBRW, cf.~\eqref{eq:3110-1}. Additionally, we require
that 
\begin{equation}\label{eq:b_large} b > \frac{\pi^2 \sigma^2}{(1-\alpha^*\theta)}  \Big(\frac{2 \alpha^*}{\log 2}\Big)^2  
 . \end{equation}

By Proposition~\ref{le:thining_survival} 
 we obtain that for any $\lambda > \sqrt{ \frac{\pi^2 \sigma^2}{2 b (1- \alpha^*\theta)}} $ 
\begin{equation}\label{eq:thin} \liminf_{n \ra \infty} N_n^{-1/3} \log P \big( \OffAsymp \neq \emptyset \big)
\geq 
- \la \alpha^*  . \end{equation}
Now, by~\eqref{eq:b_large} we have that
\[ \sqrt{ \frac{\pi^2 \sigma^2}{2 b (1- \alpha^*\theta)}}  < \frac{ \log 2 }{2\sqrt{2} \, \alpha^*} , \]
so we can additionally assume that  $\la$ is small enough such that
\[ 2 \la \alpha^* < \log 2 . \]
Then, if we define
\[ r := \frac{1}{16} P ( \OffAsymp \neq \emptyset ) ,   \]
we obtain by~\eqref{eq:thin}  that 
\[ r^{-2} 2^{-N^{1/3}} \leq
e^{N^{1/3} ( 2 \la \alpha^* - \log 2 + o(1)) }  \ra 0 ,   \]

We will use the following general fact (see \cite[Fact 4.2]{GHS11}, but also \cite[Lemma 5.2]{HS09}): let $X_1, \ldots, X_k$ be independent non-negative random variables and suppose
$F: (0,\infty) \ra [0,\infty)$ is non-increasing, then 
\begin{equation}\label{eq:fact} E\Big[ \sum_{i=1}^k F(X_i) \, \Big|\, \sum_{i=1}^k X_i > 0 \Big] \leq \max_{1\leq i \leq k} E[ F(X_i) \, |\ X_i > 0 ] . \end{equation}

We will eventually apply Lemma~\ref{le:GHS-GW} to $\underline {\rm GW}_n$ and  thus  first estimate 
$P( 1 \leq |\underline\cC_n | \leq r^{-2} )$. We write $\underline S$ for the positions in the modified branching random walk used in the definition of $\underline {\rm GW}_n$.
For all sufficiently large $n$, by using~\eqref{eq:fact} in the second step, we obtain
\[ \begin{aligned} P( 1 \leq |\underline\cC_n |  &\leq r^{-2} )  \leq  P( 1 \leq |\underline\cC_n | \leq 2^{N^{1/3}} ) \\
& = P\Big(1 \leq  \sum_{x \in \OffAsymp} \#\{ y > x \, : \, 
|y| = L(N) \, , \,  \Delta \underline S(y_i) \leq M , i = N , \ldots, L(N) \} \leq 2^{N^{1/3}}\Big) \\
& \leq P  (\OffAsymp \neq \emptyset) P \big( \underline X_{\lfloor N^{1/3}\rfloor} \leq 2^{N^{1/3}} \, | \, \underline X_{\lfloor N^{1/3}\rfloor} \geq 1 \big)\\
& \leq P (\OffAsymp \neq \emptyset) \, 2^{-N^{1/3}}\\
& = P (\OffAsymp \neq \emptyset) \, r^2 o(1) , \end{aligned}
  \]
where we used Lemma~\ref{le:exp_growth} for the last inequality.

Finally, we can lower bound the survival probability of the killed IBRW by the survival probability of $\underline {\rm GW}_n$, where we recall that the distribution of $\underline {\rm GW}$ does not depend on the initial position and the initial type. Also, we deduce from the upper bound shown in Section~\ref{sec:upper_bound} that necessarily $P ( \underline {\rm GW}_n \neq \emptyset) \ra 0$, so that the assumption $r \leq \min\{ \frac{1}{8}, P (|\underline {\rm GW}_n|< \infty) \}$
is satisfied for large $n$ and we can apply Lemma~\ref{le:GHS-GW} to obtain
\[ \begin{aligned}
\zeta_n & \geq P( \underline {\rm GW}_n \neq \emptyset ) \\
& \geq P ( \underline \cC_n \neq \emptyset ) - 2 r^{-2} P ( 1 \leq |\underline \cC_n| \leq r^{-2}) - 2 r \\
& \geq (1-\underline q_m) P (\OffAsymp \neq \emptyset) - o(1) P (\OffAsymp \neq \emptyset) - 2 \frac{1}{16} P (\OffAsymp \neq \emptyset) \\
&\geq \frac 1 4 P (\OffAsymp \neq \emptyset) .
\end{aligned}
\]
Hence, we get from~\eqref{eq:thin} that
\[ \lim_{n \ra \infty} N_n^{-1/3} 
\log \zeta_n \geq - \la \alpha^*. 
\]
Finally, letting $\la \downarrow \sqrt{ \frac{\pi^2 \sigma^2}{2 b (1- \alpha\theta)}} $
and noting that $\eps_n = b N^{-2/3}(1+o(1))$,
we obtain 
\[ \lim_{n \ra \infty} \eps_n^{1/2}
\log \zeta_n \geq -  \alpha^*\sqrt{ \frac{\pi^2 \sigma^2}{2  (1- \alpha^*\theta)}}
\]
Hence,~\eqref{eq:compl} follows by letting $\theta \downarrow 0$.%
\end{proof}

\section{Proofs in the linear case}\label{sec:linear}

In this section, we show how to deduce  Corollary~\ref{linear_cor} from our general result, Theorem~\ref{thm:vary_f}.

\begin{proof}[Proof of Corollary~\ref{linear_cor}]
In the proof of Proposition 1.3 in \cite{DerMoe13} it was shown that for linear attachment functions, the spectral radius of $A_{\alpha}$ equals the largest eigenvalue of 
\[
\begin{pmatrix}
\frac{\beta}{\alpha-\gamma} & \frac{\beta}{1-\gamma-\alpha}\\
\frac{\beta+\gamma}{\alpha-\gamma} & \frac{\beta}{1-\gamma-\alpha}
\end{pmatrix}.
\]
This eigenvalue is given by
\begin{equation}\label{eq:spectral}
\rho(\alpha)=\frac{1}{2(1-\gamma-\alpha)(\alpha-\gamma)}\Big[\beta (1-2\gamma)+\sqrt{\beta^2 (1-2\gamma)^2+4\beta \gamma (1-\gamma-\alpha)(\alpha-\gamma)}\Big].
\end{equation}
In particular, $\alpha^*=\frac{1}{2}$ regardless of the choice of $\gamma \in [0,\frac{1}{2})$ and $\beta \in (0,1]$, and
\[
\rho(\alpha^*)=\frac{1}{\frac{1}{2}-\gamma}\Big[\beta+\sqrt{\beta^2+\beta \gamma}\Big].
\]
In order to apply Theorem~\ref{thm:vary_f}, we need to determine $\rho''(\alpha^*)$. To this end, we write $\Theta(\alpha)$ for the large squared bracket in \eqref{eq:spectral} and
\[
\varphi(\alpha)=(1-\gamma-\alpha)(\alpha-\gamma) \quad \Rightarrow \quad \varphi'(\alpha)=1-\gamma-\alpha-\alpha+\gamma =1-2 \alpha.
\]
Then
\begin{align}
\rho'(\alpha)&=-\frac{1}{2} \frac{\varphi'(\alpha)}{\varphi(\alpha)^2}( \beta(1-2\gamma)+ \Theta(\alpha)) +\frac{1}{2\varphi(\alpha)} \Big[0+\frac{1}{2} \big( \beta^2(1-2\gamma)^2+4 \beta \gamma \varphi(\alpha)\big)^{-\frac{1}{2}} \big(0+4 \beta \gamma (1-2\alpha)\big) \Big]\notag\\
&=\frac{2\alpha -1}{\varphi(\alpha)} \rho(\alpha) + \frac{4 \beta \gamma (1-2\alpha)}{4\varphi(\alpha)} \big( \beta^2(1-2\gamma)^2+4 \beta \gamma \varphi(\alpha)\big)^{-\frac{1}{2}}\notag\\
&=(2\alpha -1)\frac{1}{\varphi(\alpha)}\Big[ \rho(\alpha) - \frac{\beta \gamma}{ \sqrt{ \beta^2(1-2\gamma)^2+4 \beta \gamma \varphi(\alpha)}}\Big].\label{eq:1stder}
\end{align}
We need the second derivative only for $\alpha=\alpha^*=\frac{1}{2}$.
Since after applying the product rule, any term multiplied by $(2\alpha-1)$ vanishes, 
we obtain
\begin{equation}\label{2nd_der_rho}
\begin{split}
\rho''(\tfrac{1}{2})&= 
\frac{2}{\varphi(\alpha)}\Big[ \rho(\alpha) - \frac{\beta \gamma}{ \sqrt{ \beta^2(1-2\gamma)^2+4 \beta \gamma \varphi(\alpha)}}\Big]\bigg|_{\alpha=\frac{1}{2}}\\
&=\frac{2}{(\frac{1}{2}-\gamma)^2}\Big[ \rho(\alpha^*) - \frac{\beta \gamma}{ (1-2\gamma) \sqrt{\beta^2+ \beta \gamma}}\Big].
\end{split}
\end{equation}
The critical values $\betac(\gamma)$ and $\gammac(\beta)$ are chosen such that $\rho(\alpha^*)>1$ if and only if $\beta >\betac(\gamma)$ or $\gamma >\gammac(\beta)$. This implies
\begin{equation}\label{def_criticals}
\betac=\betac(\gamma)=\frac{(\frac{1}{2}-\gamma)^2}{1-\gamma}\quad \text{and} \quad \gamma_c=\gamma_c(\beta)=\frac{1}{2} \big(1-\beta -\sqrt{\beta^2+2\beta}\big).
\end{equation}
Clearly,
\begin{align}
&\sqrt{\betac^2+\betac\gamma}=\frac{1}{2}-\gamma-\betac=\frac{1}{2}\frac{\frac{1}{2}-\gamma}{1-\gamma},\label{formula_beta} \\  &\sqrt{\beta^2+\beta\gammac}=\tfrac{1}{2}-\gammac-\beta=\frac{1}{2}(\sqrt{\beta^2+2\beta}-\beta).\label{formula_gamma}
\end{align}
One easily checks that
\[
\frac{(\frac{1}{2}-\gammac(\beta))^2}{1-\gammac(\beta)}=\beta,
\]
i.e.\ $\betac(\gammac(\beta))=\beta$. We write $\rho(\alpha;\beta)$ or $\rho(\alpha;\gamma)$ to emphasize the dependence on $\beta$ or $\gamma$, respectively. By \eqref{2nd_der_rho}, \eqref{def_criticals} and \eqref{formula_beta},
\begin{align*}
\partial_{\alpha,\alpha}\rho(\alpha^*;\betac)&=\frac{2}{(\frac{1}{2}-\gamma)^2}\Big[ 1 - \frac{\betac \gamma}{ (1-2\gamma) \sqrt{\betac^2+ \betac \gamma}}\Big]=\frac{2}{(\frac{1}{2}-\gamma)^2}\Big[ 1 - \frac{(\frac{1}{2}-\gamma)^2}{1-\gamma} \frac{\gamma}{(\frac{1}{2}-\gamma) \frac{\frac{1}{2}-\gamma}{1-\gamma} }\Big]\\
&=\frac{2}{(\frac{1}{2}-\gamma)^2} [1 - \gamma]=2/\betac.
\end{align*}
By the continuity of $\rho$ and its derivatives in $\beta$ and $\gamma$, we obtain $\sigma^2=2/\betac(\gamma)$ for the convergence $\beta \downarrow \betac(\gamma)$ and $\sigma^2=2/\beta$ for $\gamma \downarrow \gammac(\beta)$. 
Moreover, two Taylor expansions yield for $\beta \downarrow \betac$
\begin{equation}\label{expan_eq}
\log \rho(\alpha^*;\beta)=\log(1+\partial_{\beta}\rho(\alpha^*;\beta_c) (\beta-\beta_c) +o(\beta-\beta_c)) = \partial_{\beta}\rho(\alpha^*;\beta_c) (\beta-\beta_c) (1+o(1)).\\
\end{equation}
The derivative is given by
\begin{align*}
\partial_{\beta}\rho(\alpha^*; \betac)&=\frac{1}{\frac{1}{2}-\gamma} \big(1 + \frac{ 2\betac +\gamma}{2\sqrt{\betac^2+\betac \gamma}}\big)= \frac{1}{\frac{1}{2}-\gamma} \Bigg(1 + \frac{ \frac{\frac{1}{2}-\gamma+\gamma^2}{1-\gamma}}{\frac{\frac{1}{2}-\gamma}{1-\gamma}}\Bigg)\\
&= \frac{1}{\frac{1}{2}-\gamma}  \frac{ 1-2\gamma+\gamma^2}{\frac{1}{2}-\gamma}=\left(\frac{1-\gamma}{\frac{1}{2}-\gamma}\right)^2.
\end{align*}
For the corresponding derivative with respect to $\gamma$ we use \eqref{formula_gamma} to derive
\begin{align*}
\partial_{\gamma}\rho(1/2; \gammac)&=\frac{1}{\frac{1}{2}-\gammac} \rho(1/2;\gamma_c)+\frac{1}{\frac{1}{2}-\gammac} \frac{\beta}{2\sqrt{\beta^2+\beta \gammac}}\\
&= \frac{1}{\frac{1}{2}-\gammac} (1+\frac{\beta}{\sqrt{\beta^2+2\beta}-\beta}) = \frac{2}{\sqrt{\beta^2+2\beta}+\beta} \frac{\sqrt{\beta^2+2\beta}-\beta+\beta}{\sqrt{\beta^2+2\beta}-\beta}\\
&= 2 \frac{\sqrt{\beta^2+2\beta}}{\beta^2+2\beta-\beta^2}= \frac{\sqrt{\beta^2+2\beta }}{\beta}.
\end{align*}
Repeating the argument of \eqref{expan_eq} for $\gamma$ instead of $\beta$, Theorem~\ref{thm:vary_f} yields
\begin{align*}
& \lim_{\beta \downarrow \betac(\gamma)} \sqrt{\beta-\betac} \log \size(\gamma \cdot +\beta)= -\sqrt{\frac{\pi^2}{2}} \sqrt{\frac{\sigma^2}{\partial_{\beta}\rho(\alpha^*;\betac)}} \alpha^*=-\frac{\pi}{2}\sqrt{\frac{2\frac{2}{\betac}}{\frac{1-\gamma}{\betac}}} \frac{1}{2}= - \frac{\pi}{2\sqrt{1-\gamma}},\\
& \lim_{\gamma \downarrow \gammac(\beta)} \sqrt{\gamma-\gammac} \log \size( \gamma\cdot +\beta)= -\sqrt{\frac{\pi^2}{2}} \sqrt{\frac{\sigma^2}{\partial_{\gamma}\rho(\alpha^*;\gammac)}} \alpha^*=-\frac{\pi}{2} \sqrt{\frac{2\frac{2}{\beta}}{\frac{\sqrt{\beta^2+2\beta}}{\beta}}} \frac{1}{2}=-\frac{\pi}{2(\beta^2+2\beta)^{\frac{1}{4}}}.\qedhere
\end{align*}
\end{proof}

\appendix

\section{Appendix: Proof of Mogulskii's Theorem}\label{mogulskii_proof}

This section is devoted to the proof of Theorem~\ref{thm:mogulskii}.

\begin{proof}
The proof is an adaptation of the proof presented in \cite{GHS11}. We give a detailed proof of the upper bound. The changes for the lower bound are similar. 

Let $N=\lfloor \frac{k_n}{r_n}\rfloor$ and $m_N=k_n$, $m_k=k r_n$ for $0 \le k \le N-1$. The Markov property implies that
\begin{align*}
&\P_{(0,\type_0)}(E_n)=\P_{(0,\type_0)}\Big(\bigcap_{k=1}^N \{ g_1\big(\frac{i}{k_n}\big) \le \frac{S_i^{(n)}}{a_n} \le g_2\big(\frac{i}{k_n}\big) \, \forall\, i \in (m_{k-1},m_k]\cap \N \}\Big)\\
&\le \prod_{k=2}^{N-1} \sup_{x \in [g_1(\frac{m_{k-1}}{k_n}),g_2(\frac{m_{k-1}}{k_n})]} \sup_{\type \in \typespace} \P_{(0,\type)}\Big(g_1(\tfrac{m_{k-1}+i}{k_n}) \le \frac{S_i^{(n)}}{a_n}+x\le g_2(\tfrac{m_{k-1}+i}{k_n})\, \forall \,i\in [r_n]\Big).
\end{align*}
Since $g_1$ and $g_2$ are bounded, for every $\delta >0$ there exists $K \in \N$ such that $[g_1(\frac{m_{k-1}}{k_n}),g_2(\frac{m_{k-1}}{k_n})] \subseteq [-K\delta, (K-1)\delta] =\bigcup_{j=-K}^{K-1} [j\delta,(j+1)\delta]$. Continuity of $g_1$ and $g_2$ further guarantees the existence of $A=A(\delta)>0$ such that
\begin{equation}\label{def_A_eq}
\sup_{0 \le s,t\le 1\colon |s-t|\le \frac{2}{A}} |g_1(t)-g_1(s)|+|g_2(t)-g_2(s)|< \delta.
\end{equation}
Let $J_{l}=(\frac{(l-1)(N-2)}{A}+1,\frac{l(N-2)}{A}+1] \cap \N$ for $j\in [A]$. We show that for sufficiently large $n$, for all $i \in [r_n]$ and $l \in [A]$
\[
\Big|\frac{i+m_{k-1}}{k_n}-\frac{l}{A}\Big|\le \frac{2}{A} \quad \text{for all } k \in \Big\{\frac{(l-1)(N-1)}{A}+1,\ldots,\frac{l(N-2)}{A}+1\Big\}.
\]
Since $\frac{k_n}{r_n} -  (N-1) \in [1,2]$, we have
\begin{align*}
&\frac{i+m_{k-1}}{k_n} \ge \frac{1+r_n \frac{(l-1)(N-1)}{A}}{k_n} \ge \frac{l-2}{A} \qquad \text{for all } l\in [A]\\
\Leftrightarrow \; &  A \ge k_n(l-2)-r_n(l-1)(N-1)=-k_n +(l-1)r_n \Big(\frac{k_n}{r_n}-(N-1)\Big) \quad \text{for all }l\in [A]\\
\Leftarrow\; & A\ge -k_n +(A-1)r_n 2.
\end{align*}
Since $\lim_{n \to \infty}r_n/k_n = 0$, this is satisfied for large $n$. For the other direction, we use that $\frac{r_n}{k_n} \le \frac{1}{N}$ to see that
\begin{align*}
&\frac{i+m_{k-1}}{k_n} \le \frac{kr_n}{k_n} \le \Big(\frac{l(N-2)}{A}+1\Big)\frac{r_n}{k_n} \le \frac{l+2}{A} \qquad\text{for all } l\in [A]\\
\Leftrightarrow \;&  A\frac{r_n}{k_n} \le l+2-l(N-2)\frac{r_n}{k_n}=l\Big(1-(N-2)\frac{r_n}{k_n}\Big)+2 \quad \text{for all } l\in [A]\\
\Leftarrow \;&  A\frac{r_n}{k_n} \le 2.
\end{align*}
The last inequality holds since $\frac{r_n}{k_n} \to 0$. The small $k$-values are needed later for the proof of the lower bound. For $k \in J_l$, \eqref{def_A_eq} implies that for all $x \in [j \delta, (j+1)\delta]$
\begin{align*}
&\sup_{\type \in \typespace} \P_{(0,\type)} \bigg(g_1\Big(\frac{m_{k-1}+i}{k_n}\Big)\le \frac{S_i^{(n)}}{a_n}+x\le g_2\Big(\frac{m_{k-1}+i}{k_n}\Big) \, \forall i\in [r_n]\bigg)\\
&\le \sup_{\type \in \typespace} \P_{(0,\type)} \Big(g_1\Big(\frac{l}{A}\Big)-(j+2)\delta \le \frac{S_i^{(n)}}{a_n}\le g_2\Big(\frac{l}{A}\Big)-(j-1)\delta \, \forall i\in [r_n]\Big)=:q_{l,n}(j).
\end{align*}
Since $\bigcup_{l=1}^A J_l =(1,N-1]\cap \N$, we derive
\[
\P_{(0,\type_0)}(E_n)\le \prod_{l=1}^A \Big[\max_{j \in [-K,K) \cap \Z} q_{l,n}(j)\Big]^{|J_l|}.
\]
The assumed uniform convergence implies that for $n \to \infty$
\[
q_{l,n}(j) \to P\Big(g_1\Big(\frac{l}{A}\Big)-(j+2)\delta \le \sqrt{\sigma^2A}W_t \le g_2\Big(\frac{l}{A}\Big)-(j-1)\delta\, \forall t \in [0,1]\Big).
\]
The right-hand side can be estimated by (see for example \cite{IMK74} p.31 or \cite{GHS11} Eq.(5.4))
\[
\exp\Big(-\frac{\pi^2 \sigma^2}{2} \frac{(1-\delta)A}{[g_2(\frac{l}{A})-g_1(\frac{l}{A})+3\delta]^2}\Big).
\]
Since there are only finitely many $(j,l) \in [-K,K) \cap \N\times \{1,\ldots,A\}$, the convergence and the bound hold uniformly in $(j,l)$. Moreover,
\[
\# J_l \ge \frac{l(N-2)}{A}-\frac{(l-1)(N-2)}{A} -1 =\frac{N-2}{A} -1\ge \frac{\frac{k_n}{r_n}-1-2}{A}-1\ge \frac{k_n}{a_n^2 A^2}-\frac{3}{A}-1.
\]
Now we collected everything needed to derive
\begin{align*}
\limsup_{n \to \infty}\frac{a_n^2}{k_n} \log P_{(0,\type_0)}(E_n) & \le \limsup_{n \to \infty}\frac{a_n^2}{k_n} \sum_{l=1}^A \# J_l \log \max_{j \in [-K,K) \cap \Z} \tilde{q}_{l,n}(j)\\
&\le \sum_{l=1}^A \limsup_{n \to \infty} \frac{a_n^2}{k_n} \Big(\frac{n}{a_n^2 A^2}-\frac{3}{A}-1\Big) \log \max_{j \in [-K,K) \cap \Z} \tilde{q}_{l,n}(j)\\
&\le \frac{1}{A^2}\sum_{l=1}^A -\frac{\pi^2 \sigma^2}{2} \frac{(1-\delta)A}{[g_2(\frac{l}{A})-g_1(\frac{l}{A})+3\delta]^2}.
\end{align*} 
Since $g_1$ and $g_2$ are continuous functions, taking $A\to \infty$ yields
\[
\limsup_{n \to \infty}\frac{a_n^2}{k_n} \log P_{(0,\type_0)}(E_n)\le  -\frac{\pi^2 \sigma^2}{2}  \int_0^1 \frac{(1-\delta)}{[g_2(x)-g_1(x)+3\delta]^2}\, dx.
\]
Now we can take $\delta \to 0$ to establish the claim.

\emph{Sketch of the lower bound:} Choose a continuous function $g\colon[0,1] \to \R$ such that $g_1(t) < g(t) <g_2(t)$ for all $t \in [0,1]$. Since it suffices to prove the lower bound for $b$ small, we can assume that $g(1) \geq g_2(1) -b$.
Then, let $\delta >0$ be such that $g(t)-g_1(t)>3\delta$ and $g_2(t)-g(t) >9\delta$ for all $t\in [0,1]$. Moreover, $A$ is chosen large enough that
\[
\sup_{0 \le s \le t \le 1\colon t-s \le \frac{2}{A}} \Big(|g_1(t)-g_1(s)|+|g(t)-g(s)|+|g_2(t)-g_2(s)|\Big) \le \delta.
\]
Choose $N=\lfloor \frac{k_n}{r_n}\rfloor$, $m_N=k_n$ and $m_k=k r_n$ for $0 \le k \le N-1$. Writing $y_k=g(\frac{m_k}{k_n})$ for $1\le k \le N$, the Markov property implies
\[
P_{0,\type_0}(E_n) \ge p_{1,n}(0,\type_0) \times \prod_{k=2}^{N} \inf_{y \in [y_{k-1},y_{k-1}+6 \delta]} \inf_{\type \in \typespace} p_{k,n}(y,\type),
\]
where for $1 \le k \le N$, $y \in \R$, and $\type \in \typespace$
\begin{align*}
p_{k,n}(y,\type)&=P_{(0,\type)}\Big(\alpha_{i,k,n} \le \frac{S_i^{(n)}}{a_n} +y \le \beta_{i,k,n}, \forall i \in [m_k-m_{k-1}];  y_k \leq S^\ssup{n}_{m_k-m_{k-1}} + 6 \delta \Big)\\
&\alpha_{i,k,n}:= g_1\Big(\frac{i+m_{k-1}}{k_n}\Big) \quad \beta_{i,k,n}:=g_2\Big(\frac{i+m_{k-1}}{k_n}\Big).
\end{align*}
Now the choice of parameters implies that for $n$ sufficiently large
\[
P_{0,\type_0}(E_n) \ge \min\{p_{N,n}^1,p_{N,n}^2\} \prod_{l =1}^{A} \Big(\min\{q_{l,n}^1,q_{l,n}^2\}\Big)^{k_n/[(Aa_n^2-1)A]},
\]
where $p_{N,n}^1$ is the infimum over all $\type \in \typespace$ of $p_{N,n}(y_{k-1},\type)$ where $\beta_{i,N,n}$ is replaced by $\beta_{i,N,n}-3 \delta$ and the same for $p_{N,n}^2$ with $\alpha_{i,N,n}$ replaced by $\alpha_{i,N,n}-3 \delta$ and $\beta_{i,N,n}$ by $\beta_{i,N,n}-6\delta$. Moreover,
\[\begin{aligned}
q_{l,n}^{(1)}& =\inf_{\type \in \typespace}P_{(0,\type)}\Big(g_1\Big(\frac{l}{A}\Big)-g\Big(\frac{l}{A}\Big)+2 \delta \le \frac{S_i^{(n)}}{a_n} \le g_2\Big(\frac{l}{A}\Big)-g\Big(\frac{l}{A}\Big)-5\delta\; \forall \,i \in [r_n], \delta \leq \frac{S_{r_n}^\ssup{n}}{a_n} \leq 2 \delta \Big),\\
q_{l,n}^{(2)}& =\inf_{\type \in \typespace}P_{(0,\type)}\Big(g_1\Big(\frac{l}{A}\Big)-g\Big(\frac{l}{A}\Big)- \delta \le \frac{S_i^{(n)}}{a_n} \le g_2\Big(\frac{l}{A}\Big)-g\Big(\frac{l}{A}\Big)-8\delta\; \forall \,i \in [r_n], -2\delta \leq \frac{S_{r_n}^\ssup{n}}{a_n} \leq - \delta \Big)
\end{aligned} 
\]
The claim now follows with the same arguments as in \cite{GHS11} because of the assumed convergence of the probabilities.
\end{proof}

\bibliographystyle{abbrv}

\begin{thebibliography}{10}

\bibitem{Bi99}
P.~Billingsley.
\newblock {\em Convergence of probability measures}.
\newblock Wiley Series in Probability and Statistics: Probability and
  Statistics. John Wiley \& Sons Inc., New York, second edition, 1999.

\bibitem{BolJanRio05}
B.~Bollob{\'a}s, S.~Janson, and O.~Riordan.
\newblock The phase transition in the uniformly grown random graph has infinite
  order.
\newblock {\em Random Structures Algorithms}, 26(1-2):1--36, 2005.

\bibitem{BR05}
B.~Bollob{\'a}s and O.~Riordan.
\newblock Slow emergence of the giant component in the growing {$m$}-out graph.
\newblock {\em Random Structures Algorithms}, 27(1):1--24, 2005.

\bibitem{CbAH02}
R.~Cohen, D.~{ben-Avraham}, and S.~Havlin.
\newblock Percolation critical exponents in scale-free networks.
\newblock {\em Physical Review E}, 66:036113, 2002.

\bibitem{DerMoe09}
S.~Dereich and P.~M{\"o}rters.
\newblock Random networks with sublinear preferential attachment: degree
  evolutions.
\newblock {\em Electron. J. Probab.}, 14:1222--1267, 2009.

\bibitem{DerMoe13}
S.~Dereich and P.~M{\"o}rters.
\newblock Random networks with sublinear preferential attachment: the giant
  component.
\newblock {\em Ann. Probab.}, 41(1):329--384, 2013.

\bibitem{Du91}
R.~Durrett.
\newblock {\em Probability: Theory and Examples}.
\newblock Wadsworth, Pacific Grove, CA, 1991.

\bibitem{GHS11}
N.~Gantert, Y.~Hu, and Z.~Shi.
\newblock Asymptotics for the survival probability in a killed branching random
  walk.
\newblock {\em Ann. Inst. Henri Poincar\'e Probab. Stat.}, 47(1):111--129,
  2011.

\bibitem{HS09}
Y.~Hu and Z.~Shi.
\newblock Minimal position and critical martingale convergence in branching
  random walks, and directed polymers on disordered trees.
\newblock {\em Ann. Probab.}, 37(2):742--789, 2009.

\bibitem{IMK74}
K.~It{\^o} and H.~P. McKean, Jr.
\newblock {\em Diffusion processes and their sample paths}.
\newblock Springer-Verlag, Berlin, 1974.

\bibitem{Kat95}
T.~Kato.
\newblock {\em Perturbation theory for linear operators}.
\newblock Classics in Mathematics. Springer-Verlag, Berlin, 1995.

\bibitem{LastPenrose17}
G.~Last and M.~Penrose.
\newblock {\em Lectures on the Poisson Process}.
\newblock IMS Textbook. Cambridge University Press, 2017.

\bibitem{Ly94}
R.~Lyons.
\newblock A simple path to {B}iggins' martingale convergence for branching
  random walk.
\newblock In {\em Classical and modern branching processes}, volume~84 of {\em
  IMA Vol. Math. Appl.}, pages 217--221. Springer, New York, 1997.

\bibitem{Mo74}
A.~A. Mogulskii.
\newblock Small deviations in the space of trajectories.
\newblock {\em Theory Probab. Appl.}, 19:726--736, 1974.

\bibitem{Pin95}
R.~G. Pinsky.
\newblock {\em Positive harmonic functions and diffusion}, volume~45 of {\em
  Cambridge Studies in Advanced Mathematics}.
\newblock Cambridge University Press, Cambridge, 1995.

\bibitem{Rio05}
O.~Riordan.
\newblock The small giant component in scale-free random graphs.
\newblock {\em Combin. Probab. Comput.}, 14(5-6):897--938, 2005.

\bibitem{Rob09}
M.~I. Roberts.
\newblock {\em Spine changes of measure and branching diffusions}.
\newblock PhD thesis, University of Bath, 2009.
\newblock Available at http://people.bath.ac.uk/mir20/research.html.

\end{thebibliography}

\end{document}